\def\blfootnote{\xdef\@thefnmark{}\@footnotetext}
\newtheorem{thm}{Theorem}[section]
\newtheorem{cor}[thm]{Corollary}
\newtheorem{lem}[thm]{Lemma}
\newtheorem{claim}[thm]{Claim}
\theoremstyle{definition}
\newtheorem{defn}[thm]{Definition}
\theoremstyle{remark}
\newtheorem{rem}[thm]{Remark}
\newfont{\eufm}{eufm10}
\renewcommand{\phi}{\varphi}
\newcommand{\N}{\mathbb N}
\newcommand{\Z}{\mathbb Z}
\newcommand{\R}{\mathbb R}
\newcommand{\hyp}{\mathbb H}
\newcommand{\llangle }{\langle\hspace{-.7mm}\langle }
\newcommand{\rrangle }{\rangle\hspace{-.7mm}\rangle }
\newcommand{\Mod}{\operatorname{Mod}}
\newcommand{\PMod}{\operatorname{PMod}}
\newcommand{\Fix}{\operatorname{Fix}}
\newcommand{\SL}{\operatorname{SL}}
\newcommand{\Aut}{\operatorname{Aut}}
\newcommand{\diam}{\operatorname{diam}}
\newcommand{\C}{\mathcal{C}}
\newcommand{\nest}{\sqsubseteq}
\renewcommand{\H}{\mathcal{H}}
\def\mc {\mathcal}
\def\onto {\twoheadrightarrow}
\begin{document}

\title{Largest hyperbolic actions and quasi-parabolic actions in groups}
\author{Carolyn R. Abbott and Alexander J. Rasmussen}
\date{}
\maketitle

\begin{abstract}
The set of equivalence classes of cobounded actions of a group on different hyperbolic metric spaces carries a natural partial order. The resulting poset thus gives rise to a notion of the ``best'' hyperbolic action of a group as the largest element of this poset, if such an element exists. We call such an action a \textit{largest hyperbolic action}. While hyperbolic groups  admit largest hyperbolic actions, we give evidence in this paper that this phenomenon is rare for non-hyperbolic groups. In particular, we prove that many families of groups of geometric origin do not have largest hyperbolic actions, including for instance many 3-manifold groups and most mapping class groups. Our proofs use the quasi-trees of metric spaces of Bestvina--Bromberg--Fujiwara, among other tools. In addition, we give a complete characterization of the poset of hyperbolic actions of Anosov mapping torus groups, and we show that mapping class groups of closed surfaces of genus at least two have hyperbolic actions which are comparable only to the trivial action.
\end{abstract}

\section{Introduction}

A fruitful approach for proving algebraic, geometric, and algorithmic facts about groups is to study their (isometric) actions on metric spaces which exhibit large-scale negative curvature --- so-called Gromov hyperbolic metric spaces. Among many other things, such actions may be used to study quotients of groups (\cite{dgo,exotic}), bounded cohomology of groups (\cite{bchmcg}), and isoperimetric functions of their Cayley graphs.

Owing to the importance of actions on hyperbolic metric spaces, it is natural to try to find a ``best'' action of a given group on a hyperbolic metric space. We will explain what we mean by this precisely below, but for now one may think of a hyperbolic action $G\curvearrowright X$ as ``best'' when any hyperbolic action $G\curvearrowright Y$ may be obtained from $G\curvearrowright X$ be applying some simple collapsing operations.

In fact, this goal is slightly too broad, as any countable group admits many actions on hyperbolic metric spaces with a global fixed point on the boundary (the \textit{parabolic} actions) which are somewhat trivial and impossible to classify. Hence we restrict our attention to cobounded actions. Given a group $G$, the equivalence classes of cobounded actions of $G$ on hyperbolic spaces form a poset $\mathcal{H}(G)$ (see Section \ref{sec:hypstructures} for the precise definition). By a best hyperbolic action, we mean the largest element of the poset $\mathcal{H}(G)$, if it exists  (an element of a poset is \textit{largest} if it is comparable to and greater than any other element of the poset).  When the group $G$ is hyperbolic, the poset $\mathcal{H}(G)$ always contains a largest element, which corresponds to the action of $G$ on its Cayley graph with respect to a finite generating set. In other words, if $G$ is hyperbolic then every cobounded hyperbolic action of $G$ may be obtained (up to equivalence) by equivariantly collapsing subspaces of its Cayley graph.

The purpose of this paper is to provide evidence that the existence of a largest element in $\mc H(G)$ is rare when $G$ is not hyperbolic.  

\begin{thm}
\label{mainthm}
The poset $\mathcal{H}(G)$ doesn't contain a largest element when $G$ is any one of the following groups:
\begin{itemize}
\item the mapping class group of an orientable finite-type surface $S$ which is not a sphere minus $\leq 4$ points or a torus minus $\leq 1$ point,
\item a non-free right-angled Artin group,
\item the fundamental group of a flip graph manifold with at least two pieces in its JSJ decomposition,
\item the fundamental group of a finite-volume cusped hyperbolic 3--manifold,
\item the fundamental group of the mapping torus of an Anosov homeomorphism of the torus,
\item a Baumslag-Solitar group,
\item a finitely generated solvable group with abelianization of rank $>1$.
\end{itemize}
\end{thm}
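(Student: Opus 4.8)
\emph{Proof strategy.}
The starting point is an elementary observation about a largest element $[X]$ of $\HG$: since $[X]\succeq[Y]$ for every cobounded hyperbolic action $Y$, there is a coarsely $G$--equivariant coarsely Lipschitz map $f\colon X\to Y$, and such a map controls loxodromic behaviour. If $g\in G$ is loxodromic on $X$ then a quasi-axis of $g$ in $X$ lies at bounded Hausdorff distance from the orbit $\langle g\rangle x_0$, so its image under $f$ lies at bounded distance from $\langle g\rangle f(x_0)$; hence if $g$ is elliptic on $Y$ this image is bounded, while if $g$ is loxodromic on $Y$ it lies at bounded distance from a quasi-axis of $g$ in $Y$. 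In particular every element which is loxodromic in \emph{some} cobounded hyperbolic action (or becomes loxodromic in a suitable quotient) is loxodromic on $X$, and if $g,h\in G$ commute and are both loxodromic on $X$ they are co-axial in $X$, so for suitable $N,M$ the element $g^Nh^M$ is loxodromic on $X$ with the same quasi-axis as $g$. The plan is to contradict the existence of a largest action in each family using these facts.

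For the non-elementary families --- mapping class groups, non-free right-angled Artin groups, flip graph manifold groups, and fundamental groups of cusped hyperbolic $3$--manifolds --- I would, in each case, exhibit a commuting pair $g,h\in G$, each loxodromic in some cobounded hyperbolic action of $G$, together with a single cobounded hyperbolic action $Y$ on which $g$ is elliptic while $h$ is loxodromic. Granting this, a largest action $[X]$ makes $g,h$ co-axial loxodromics, so $g^Nh^M$ is loxodromic on $X$ with the same quasi-axis as $g$; mapping $X$ to $Y$ then forces that quasi-axis to have bounded image (since $g$ is elliptic on $Y$) and also image at bounded distance from the quasi-axis of the loxodromic $g^Nh^M$ in $Y$ (since $h$ is loxodromic on $Y$), a contradiction. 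The ingredients: for $\Mod(S)$ take disjoint subsurfaces $W_1,W_2\subseteq S$, each supporting a pseudo-Anosov (available for all non-excluded $S$, including closed genus two, via the two sides of a separating curve), partial pseudo-Anosov maps $g$ on $W_1$, $h$ on $W_2$, and for $Y$ the Bestvina--Bromberg--Fujiwara quasi-tree of metric spaces over the $\Mod(S)$--orbit of $\C(W_2)$, the projection axioms being supplied by subsurface projections --- $h$ fixes $W_2$ and acts loxodromically on that vertex space, while $g$ has disjoint support and fixes $\C(W_2)$, hence is elliptic on $Y$; the analogous quasi-trees over the orbits of $\C(W_1)$ and $\C(W_2)$ make $g$ and $h$ individually generalized loxodromic. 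For a non-free $A_\Gamma$ use an edge $ab$ of $\Gamma$: each of $a,b$ is loxodromic in the lineal action from the retraction $A_\Gamma\to\Z$ onto the corresponding vertex; if $\Gamma$ is not a disjoint union of cliques pick a vertex $c$ adjacent to $b$ but not $a$, run the argument through the chain $a,b,c$ (so $a,c$ are co-axial on $X$) with $Y$ the Bass--Serre tree of the visual splitting separating $a$ from $c$, in which $a,c$ are independent loxodromics; the remaining case, $A_\Gamma$ a free product of nontrivial free abelian groups, is handled via the central element of an abelian free factor and the free-product splitting, as for $BS(n,n)$ below. For a flip graph manifold with at least two JSJ pieces take $g,h$ the Seifert fibres of two adjacent pieces (they lie on the common edge torus and commute; by the flip condition each is horizontal in the adjacent piece), obtaining the auxiliary action and the individual loxodromicity by killing one fibre at a time and passing to the resulting hyperbolic orbifold-group quotient, where the other fibre becomes loxodromic. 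For a cusped hyperbolic $3$--manifold group take $g,h$ a basis of a peripheral $\Z^2$: by Thurston Dehn surgery, filling the slope $h$ makes $g$ a core geodesic (loxodromic on $\mathbb{H}^3$ via the pulled-back quotient action), symmetrically for $h$, and filling $g$ gives the action $Y$ on which $g$ is killed and $h$ is a core geodesic.

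For the elementary families --- finitely generated solvable groups with $b_1>1$, Anosov mapping torus groups, and Baumslag--Solitar groups --- the plan is instead to describe $\HG$ well enough to see it has no top. The solvable groups and the Anosov mapping tori contain no non-abelian free subgroup, hence admit no general-type action, so every cobounded hyperbolic action is trivial, lineal, or quasi-parabolic; lineal actions are classified by proportionality classes of homomorphisms $G\to\R$ and comparable only when these are proportional, while quasi-parabolic actions are classified by confining subsets of the relevant normal subgroup and have a one-dimensional Busemann direction, hence cannot dominate two independent lineal actions. Thus when $b_1(G)>1$ --- which covers the solvable groups with abelianization of rank $>1$, and $BS(1,1)=\Z^2$ --- there are at least two incomparable lineal actions and nothing dominating all of them. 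For $\Z^2\rtimes_A\Z$ we have $b_1=1$, and the point is that the two irrational eigendirections of the hyperbolic matrix $A$ give, via the induced affine actions on $\R$, two quasi-parabolic actions with incomparable confining data; enumerating all confining subsets of $\Z^2$ under $A$ gives the complete poset, which has no top. For $BS(n,n)$ with $n\ge 2$, the central element $a^n$ is loxodromic in the lineal action from abelianization, yet the group contains $F_2$ and so admits a general-type action; in a largest action $a^n$ would be loxodromic and central, forcing the action to fix its endpoint pair, hence to be lineal, a contradiction. For $BS(1,n)$ with $n\ge2$ one has $b_1=1$ and argues as for the Anosov tori, the quasi-parabolic actions arising from the archimedean and the $p$--adic directions ($p\mid n$) and being pairwise incomparable; the remaining $BS(m,n)$ combine the central/general-type dichotomy with an analysis of the actions coming from the Bass--Serre tree of the HNN splitting. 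Finally, for a surface admitting no two disjoint pseudo-Anosov-supporting subsurfaces (and, in particular, for closed surfaces of genus at least two) one instead builds a quasi-tree from a single pseudo-Anosov axis orbit whose only loxodromic elements are the powers and conjugates of that pseudo-Anosov (a WWPD-type construction), obtaining a nontrivial cobounded hyperbolic action comparable only to the trivial action; since $\C(S)$ is a distinct nontrivial action, this yields the final assertion of the paper and re-proves the absence of a largest action there.

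The step I expect to be the main obstacle, for the non-elementary families, is arranging the two competing requirements on the pair $(g,h)$ and the action $Y$ at once: that $g,h$ each be loxodromic in \emph{some} cobounded action (so a largest action must see them as co-axial loxodromics), while $g$ is \emph{elliptic} in the \emph{particular} action $Y$ that also makes $h$ loxodromic. For mapping class groups and right-angled Artin groups this rests on constructing the relevant Bestvina--Bromberg--Fujiwara quasi-trees over orbits of curve graphs or Bass--Serre trees, checking coboundedness, and verifying the projection axioms via subsurface projections; for the $3$--manifold groups it rests on producing the required hyperbolic quotients (Dehn fillings, fibre-killings) and recognizing the elements as core geodesics or horizontal curves. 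The handful of low-complexity surfaces outside the disjoint-subsurface construction, and the closed-surface statement, require the more delicate minimality analysis of quasi-trees via WWPD. In the elementary regime the analogous difficulty is the complete determination of all confining subsets, hence all quasi-parabolic actions, for the Baumslag--Solitar groups and the Anosov mapping tori --- which is exactly where the explicit description of $\HG$ lives and where one must rule out unexpected actions.
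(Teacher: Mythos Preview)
Your core observation---that commuting elements loxodromic in a putative largest action are co-axial, so the image of their common quasi-axis under the domination map to any other hyperbolic action must be simultaneously bounded (where one is elliptic) and unbounded (where the other is loxodromic)---is precisely the content of the paper's Lemma~\ref{lem:mainlem}, and your implementations for cusped hyperbolic $3$--manifolds (Dehn filling), Anosov mapping tori (confining subsets), and solvable groups with large abelianization match the paper's. For right-angled Artin groups your detour through a third vertex $c$ and Bass--Serre trees is unnecessary: the two retractions $A_\Gamma\to\Z$ onto the adjacent vertices $a,b$ already give the required pair of lineal actions, exactly as the paper does.

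There are two places where your plan has genuine gaps. For mapping class groups, the quasi-tree of metric spaces over the full $\Mod(S)$--orbit of a subsurface does not exist: that orbit contains disjoint subsurfaces, between which subsurface projection is undefined, so axioms (P0)--(P2) fail. One must pass to a sub-orbit of pairwise overlapping subsurfaces via a finite coloring, and then only a finite-index subgroup $\Gamma\leq\Mod(S)$ acts on the resulting quasi-tree. The paper resolves this by extracting a Brooks quasimorphism on $\Gamma$ from a WWPD${}^+$ Dehn twist (rather than a partial pseudo-Anosov), averaging over cosets to extend it to $\Mod(S)$, and invoking Lemma~\ref{lem:quasiline} to obtain a lineal action of the full group with the prescribed elliptic/loxodromic behaviour. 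For flip graph manifolds, ``killing one fibre'' does not yield a hyperbolic orbifold quotient: that fibre is a boundary curve of each adjacent base surface, and normally closing it across a graph of groups with several pieces produces no group with an evident cobounded hyperbolic action. The paper instead constructs BBF quasi-trees from scratch, with domains the lifts of a single piece to $\tilde M$ and projections defined via nearest-point maps between boundary planes in adjacent vertex spaces; when the JSJ graph has loops this again forces a $2$-coloring, an action of an index-two subgroup only, and the same quasimorphism-averaging trick to promote it to $\pi_1(M)$. Your Baumslag--Solitar analysis is also incomplete (e.g.\ $BS(n,-n)$ has no central $a^n$), and the paper's case division---$|m|=|n|$ via two lineal actions, $1=m<|n|$ and $1<m<|n|$ via the interplay of the $\hyp^2$ action and the Bass--Serre tree---is rather different from yours.
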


We also prove a further structural theorem in the case of mapping class groups.

\begin{restatable}{thm}{maxlineal}
\label{thm:maxlineal}
Let $S$ be an orientable closed surface of genus $\geq 2$. Then $\mathcal{H}(\Mod(S))$ contains elements which are comparable only to the equivalence class of the trivial action on a point.
\end{restatable}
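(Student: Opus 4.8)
The strategy is to exhibit a cobounded action of $G=\Mod(S)$ on a hyperbolic space $\mathcal T$ that is simultaneously a \emph{minimal} non-trivial element and a \emph{maximal} element of $\mathcal H(G)$; such an element is comparable to no member of $\mathcal H(G)$ other than itself and the class of the trivial action. For $\mathcal T$, fix a pseudo-Anosov $f\in G$ and let $E=E(f)$ be the maximal virtually cyclic subgroup containing it. Since $f$ acts loxodromically and as a WPD element on the curve complex $\mathcal C(S)$ (the action being acylindrical by Bowditch), $E$ is hyperbolically embedded in $G$. Feeding the projections between the cosets $\{gE\}_{g\in G}$ --- equivalently, the annular subsurface projections along the $G$-orbit of a single simple closed curve, whose Behrstock inequalities furnish the projection axioms --- into the Bestvina--Bromberg--Fujiwara machinery yields a quasi-tree of metric spaces $\mathcal T$ whose vertex spaces are the lines $gE$. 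Then $\mathcal T$ is a quasi-tree, hence hyperbolic; $G$ acts coboundedly, since it permutes the vertex spaces transitively and $E$ acts coboundedly on its own axis; $f$ translates along the vertex space $E$, so the action is non-elementary, in fact acylindrical; and hence $[\mathcal T]\neq[\mathrm{pt}]$ in $\mathcal H(G)$.

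It is convenient first to note that the whole discussion takes place among general-type actions. Up to the equivalence defining $\mathcal H(G)$, every cobounded hyperbolic action is trivial, lineal, quasi-parabolic, or of general type, and a coarsely equivariant Lipschitz image of a lineal (respectively quasi-parabolic) action is again lineal or trivial (respectively quasi-parabolic, lineal, or trivial). As $G$ has finite abelianization --- and, when $g=2$, so does its unique index-two subgroup --- we have $H^1(G;\R)=0$, and $G$ admits no non-trivial lineal or quasi-parabolic cobounded action. Consequently every non-trivial element of $\mathcal H(G)$ is of general type; in particular so is $\mathcal T$, and so is any $Y$ comparable to it.

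For minimality, suppose $[\mathrm{pt}]\prec[Y]\preceq[\mathcal T]$. Since $Y$ is cobounded and non-elementary, some $h\in G$ acts loxodromically on $Y$, and as $Y$ is a coarsely equivariant Lipschitz quotient of the quasi-tree $\mathcal T$, the $h$-orbit on $\mathcal T$ is unbounded, hence loxodromic. I would then identify the loxodromic elements of $\mathcal T$ --- those commensurable into a conjugate of $E$, together with the Schottky products of such --- so that $Y$ makes every conjugate of $f$ loxodromic. The key is then a rigidity statement for $\mathcal T$: a coarse quotient of $\mathcal T$ on which $f$ (equivalently $E$) stays loxodromic cannot collapse any vertex space $gE$, and since $\mathcal T$ is reconstructed, via the BBF distance formula, from the vertex spaces $gE$ and their mutual projections, the quotient map must be a coarse equivalence. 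Hence $[Y]=[\mathcal T]$.

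The remaining step, maximality, is where I expect the real difficulty: if $[\mathcal T]\preceq[Y]$ then $[Y]=[\mathcal T]$. Here $Y$ is of general type with $f$ loxodromic, and the content is that $Y$ cannot \emph{strictly} refine $\mathcal T$ --- one must rule out, for instance, an action simultaneously dominating $\mathcal T$ and the curve-complex action $G\acts\mathcal C(S)$, which is presumably incomparable to $\mathcal T$. The plan is to compare translation length functions. The length function of the quasi-tree $\mathcal T$ is extremal --- it records, coarsely, only the combinatorial distance an axis travels through the tree of axes of conjugates of $f$ --- and I would show that a structure $Y$ with $\ell_Y$ comparable to or larger than $\ell_{\mathcal T}$ cannot make any further element loxodromic, neither a Dehn twist nor a pseudo-Anosov incommensurable with $f$: one produces an explicit sequence of elements, built as products of conjugates of $f$ that collapse in $\mathcal T$ while being forced to grow on such a $Y$, contradicting $d_{\mathcal T}\preceq d_Y$. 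Thus $Y$ and $\mathcal T$ have the same loxodromic elements, whereupon the rigidity of $\mathcal T$ forces $[Y]\preceq[\mathcal T]$ and so $[Y]=[\mathcal T]$. Together the two halves show $[\mathcal T]$ is comparable only to itself and $[\mathrm{pt}]$, proving the theorem; the conceptual crux, and the hardest point to make precise, is this maximality statement, which cuts against the usual expectation that hyperbolic actions can always be refined.
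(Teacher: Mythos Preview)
Your proposal rests on a false premise. You claim that because $H^1(G;\R)=0$, the group $G=\Mod(S)$ admits no non-trivial lineal or quasi-parabolic cobounded action, so every non-trivial element of $\mathcal H(G)$ is of general type. But orientable lineal structures correspond to (equivalence classes of) unbounded homogeneous \emph{quasimorphisms}, not homomorphisms (Lemma~\ref{lem:quasiline}); the vanishing of $H^1$ says nothing about these, and $\Mod(S)$ famously has an infinite-dimensional space of them. The paper in fact constructs exactly such a lineal structure, so your dichotomy collapses at the outset and the general-type candidate $\mathcal T$ you build is the wrong object.

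The paper's argument runs as follows. Take a nonseparating curve $\alpha$; the BBF/Brooks machinery (Proposition~\ref{prop:qmconstruction}, Lemma~\ref{lem:qmseparation}) produces a homogeneous quasimorphism $q$ with $q(T_\alpha)\ne 0$, hence a lineal action $\Mod(S)\curvearrowright X$ on a quasi-line in which the Dehn twist $T_\alpha$ is loxodromic. Minimality is then automatic: lineal structures are minimal among non-trivial ones (\cite[Corollary~4.12]{abo}). Maximality is the content of Theorem~\ref{lineal}: let $V$ be an annular neighborhood of $\alpha$ and form the graph $\Gamma(V)$ with vertex set $\Mod(S)\cdot V$ and edges for disjointness. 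One checks $\Gamma(V)$ is connected (using the Humphries generators and the genus hypothesis). In any hyperbolic action in which $T_\alpha$ is loxodromic, disjoint translates $hV,\,kV$ give commuting loxodromics $hT_\alpha h^{-1},\,kT_\alpha k^{-1}$ sharing their fixed-point pair on the boundary; connectedness of $\Gamma(V)$ then propagates this to show all of $\Mod(S)$ fixes $T_\alpha^\pm$, forcing the action to be lineal. Since comparable lineal structures coincide, $[X]$ is maximal, hence comparable only to the trivial class.

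Beyond the error above, both halves of your argument for $\mathcal T$ are plans rather than proofs: the minimality step presumes a rigidity statement for BBF quasi-trees that you do not establish, and the maximality step you yourself flag as the hard part and leave speculative. The actual mechanism you are missing is precisely the one your false dichotomy excludes: a \emph{lineal} action in which a Dehn twist is loxodromic, together with the connectedness of $\Gamma(V)$.
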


In the case of Anosov mapping torus groups, we give a complete characterization of the poset $\H(G)$.

\begin{thm}
\label{anosovposet}
Let $G$ be the fundamental group of the mapping torus of an Anosov homeomorphism of the torus. Then $\mathcal{H}(G)$ consists of two incomparable quasi-parabolic structures, which dominate a single lineal structure, which in turn dominates a single elliptic structure. The quasi-parabolic structures correspond to actions of $G$ on the hyperbolic plane, $\hyp^2$. See Figure \ref{anosovdiagram}.
\end{thm}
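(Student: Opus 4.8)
The plan is to classify the elements of $\H(G)$ by type. Write $G = \Z^2 \rtimes_A \Z$, where $A \in \SL_2(\Z)$ is the hyperbolic matrix induced on $\pi_1(T^2) = \Z^2$ by the Anosov homeomorphism; let $\lambda > 1 > \lambda^{-1}$ be its eigenvalues and $u_+, u_-$ the corresponding eigenvectors of $A^{T}$, which have irrational slope since $\lambda \notin \Q$. As $G$ is solvable, hence amenable, the classification of cobounded actions of amenable groups on hyperbolic spaces (Caprace--Cornulier--Monod--Tessera) shows that every element of $\H(G)$ is elliptic, lineal, or quasi-parabolic; there are no general-type structures. I treat the three families in turn.

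\emph{Elliptic and lineal structures.} A cobounded elliptic action has bounded image, so there is a unique elliptic structure --- the class of the trivial action --- and it is the minimum of $\H(G)$. A lineal action determines a nonzero homogeneous quasimorphism $G \to \R$; since $G$ is amenable, $H^2_b(G;\R) = 0$ and this is a homomorphism. Because $A$ is hyperbolic, $1$ is not an eigenvalue of $A$, so $A - I$ is invertible over $\Q$, the group $\Z^2/(A-I)\Z^2$ is finite, and $\Hom(G,\R)$ is one-dimensional, spanned by the quotient map $\pi\colon G \twoheadrightarrow \Z$. One also checks that $G$ has no surjection onto the infinite dihedral group --- a homomorphism to $\R$ from a finite-index subgroup kills $\Z^2$ and descends to a cyclic quotient on which $G$ acts by conjugation trivially --- so there is no non-orientable lineal structure. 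Hence there is exactly one lineal structure, that of $G \acts \R$ through $\pi$, and it dominates the elliptic structure.

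\emph{The two quasi-parabolic structures.} For a quasi-parabolic action the Busemann homomorphism $\beta\colon G \to \R$ is nonzero, hence a multiple of $\pi$, with $\ker\beta = \Z^2$. Two quasi-parabolic actions on $\hyp^2$ come from the faithful homomorphisms $\phi_{\pm}\colon G \to \mathrm{PSL}_2(\R)$ landing in the stabilizer $\R \rtimes \R_{>0}$ of $\infty$ defined by $v \mapsto (z \mapsto z + \langle v, u_{\pm}\rangle)$ for $v \in \Z^2$ and $t \mapsto (z \mapsto \lambda^{\pm 1} z)$; equivariance is exactly the identity $A^{T} u_{\pm} = \lambda^{\pm 1} u_{\pm}$, and each orbit is dense in $\hyp^2$ (as $u_\pm$ has irrational slope), so the action is cobounded. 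These two structures are incomparable. Writing $d_{\pm}(g) = d_{\hyp^2}(i, \phi_{\pm}(g)\cdot i)$, we have $\phi_{\pm}(v)\cdot i = \langle v, u_{\pm}\rangle + i$, and the distance in $\hyp^2$ from $i$ to $s+i$ is a strictly increasing function of $|s|$ tending to $0$ as $s\to 0$ and to $\infty$ as $|s|\to\infty$; since $|\langle A^{n}w, u_+\rangle| = \lambda^{n}|\langle w, u_+\rangle|$ and $|\langle A^{n}w, u_-\rangle| = \lambda^{-n}|\langle w, u_-\rangle|$ for $0 \ne w \in \Z^2$, letting $n \to +\infty$ gives $d_+(A^{n}w) \to \infty$ while $d_-(A^{n}w) \to 0$, and the reverse as $n \to -\infty$; so neither of $d_+, d_-$ dominates the other on $G$, and $[\phi_+] \ne [\phi_-]$. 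Finally the horofunction at $\infty$ is a $1$-Lipschitz, coarsely $G$-equivariant map $\hyp^2 \to \R$ intertwining $\phi_{\pm}$ with $G \acts \R$ through $\pi$, which yields $|\beta(g)| \le d_{\pm}(g)$ for all $g$; hence each of $[\phi_{\pm}]$ dominates the lineal structure, strictly, since $d_{\pm}(A^{n}w) \to \infty$ while $A^{n}w \in \ker\pi$.

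\emph{No further quasi-parabolic structures, and assembly.} It remains --- and this I expect to be the main difficulty --- to show that every quasi-parabolic structure of $G$ is equivalent to $\phi_+$ or to $\phi_-$. By the structure theory of focal actions (Caprace--Cornulier--Monod--Tessera), after normalizing the sign of $\beta$ by possibly replacing $t$ with $t^{-1}$, such a structure is determined by a confining subset $Q \subseteq \Z^2$ for the automorphism $A$, up to the natural equivalence relation on confining subsets. One checks directly that the half-plane $Q_+ = \{v \in \Z^2 : \langle v, u_+\rangle \le 1\}$ is confining for $A$ and recovers $\phi_+$; the heart of the matter is to run the converse, using the Anosov dynamics of $A$ --- in particular that its expanding and contracting directions $u_\pm$ have irrational slope --- to show that any confining subset $Q$ is equivalent to $Q_+$ or else to $\Z^2$ itself, the latter producing the lineal structure rather than a quasi-parabolic one. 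The sign $\beta(t) < 0$ is handled symmetrically, with $A$ replaced by $A^{-1}$ and $u_+$ by $u_-$, and gives $\phi_-$. With this in hand, $\H(G)$ consists exactly of the elliptic structure (the minimum), the unique lineal structure (which dominates only it), and the two incomparable quasi-parabolic structures $\phi_\pm$, realized on $\hyp^2$, each of which dominates only the lineal and elliptic structures and is dominated by nothing --- precisely the poset depicted in Figure~\ref{anosovdiagram}.
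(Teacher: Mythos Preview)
Your overall architecture matches the paper's: rule out general-type structures by solvability, pin down the unique lineal structure via $\Hom(G,\R)\cong\R$ (this is the paper's Lemma~\ref{abelianization}), exhibit two quasi-parabolic actions on $\hyp^2$, and reduce the classification of quasi-parabolic structures to the classification of confining subsets of $\Z^2$ under $A$ or $A^{-1}$. Your treatments of the elliptic and lineal cases, and of the existence and incomparability of the two $\hyp^2$-actions, are fine; your incomparability argument via orbit growth along $\{A^n w\}$ is in fact more direct than the paper's, which instead compares attracting versus repelling fixed points of conjugates of $t$.

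The genuine gap is exactly where you flag it. The assertion that every confining subset $Q\subset\Z^2$ is equivalent either to your $Q_+$ or to all of $\Z^2$ is the technical core of the proof, and you have not carried it out. In the paper this is the content of Lemmas~\ref{lem:containnbhd} and~\ref{lem:unbounded}, neither of which is routine. Lemma~\ref{lem:containnbhd} shows $[Q_\epsilon\cup\{t^{\pm1}\}]\succcurlyeq[Q\cup\{t^{\pm1}\}]$ for \emph{any} confining $Q$: starting from a single nonzero $u\in Q$, one builds an auxiliary set $P\subset Q$ closed under $\phi^n$ and under $(x,y)\mapsto\phi^n(x+y)$, and then proves by an inductive argument that words of bounded length in $P$ project $(\lambda^n a)$-densely onto the expanding eigenline. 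Lemma~\ref{lem:unbounded} shows that if the projection of $Q$ to the \emph{contracting} eigenline is unbounded then $[Q\cup\{t^{\pm1}\}]=[\Z^2\cup\{t^{\pm1}\}]$, via a similar but more intricate density argument. The irrationality of the eigendirections is used, but the substance lies in these iterated-sum constructions and their density estimates; invoking ``Anosov dynamics'' does not substitute for them. As written, your proposal is a correct outline rather than a proof.

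A minor point: your $Q_+=\{v:\langle v,u_+\rangle\le 1\}$ is neither symmetric nor confining under $A$ (since $\langle Av,u_+\rangle=\lambda\langle v,u_+\rangle$ grows); the paper works with the symmetric strips $Q_\epsilon=\{av^++bv^-:|b|\le\epsilon\}$, bounded in the contracting direction, and you should correct the indexing accordingly.
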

 
\begin{figure}[h]
\centering
 
\begin{tikzpicture}[scale=0.35]

\node[circle, draw, minimum size=0.8cm] (hypleft) at (-3,3) {$ \hyp^2$};
\node[circle, draw, minimum size=0.8cm] (hypright) at (3,3) {$ \hyp^2$};
\node[circle, draw, minimum size=0.8cm] (lin) at (0,0) {$ \R$};
\node[circle, draw, minimum size=0.8cm] (triv) at (0,-3) {$ *$};

\draw[thick] (triv) -- (lin) -- (hypleft);
\draw[thick] (lin) -- (hypright);
\end{tikzpicture}

\caption{The poset $\mathcal{H}(G)$ when $G$ is an Anosov mapping torus.}
\label{anosovdiagram}.
\end{figure}
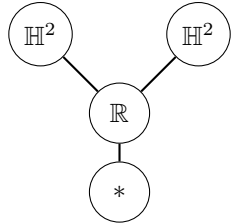

%
%

\subsection{About the proofs}

Denote by $\preceq$ the partial order on hyperbolic actions of a group. We prove the following simple lemma.

\begin{restatable}{lem}{mainlem}
\label{lem:mainlem}
Let $G$ be a group. Let $a,b\in G$ be elements which commute and let $G\curvearrowright X$, $G\curvearrowright Y$ be two actions on hyperbolic spaces such that
\begin{itemize}
\item $a$ acts loxodromically and $b$ acts elliptically in the action $G\curvearrowright X$,
\item $b$ acts loxodromically in the action $G\curvearrowright Y$.
\end{itemize} 
Then there does not exist an action $G\curvearrowright Z$ with $Z$ hyperbolic such that $G\curvearrowright X \preceq G\curvearrowright Z$ and $G\curvearrowright Y\preceq G\curvearrowright Z$.
\end{restatable}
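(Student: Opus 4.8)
The plan is to argue by contradiction. Suppose there is a cobounded action $G\curvearrowright Z$ on a hyperbolic space $Z$ with $G\curvearrowright X\preceq G\curvearrowright Z$ and $G\curvearrowright Y\preceq G\curvearrowright Z$. After replacing each of $X,Y,Z$ by an equivalent Cayley graph (the \v{S}varc--Milnor lemma), we may assume the spaces are geodesic and that each domination is witnessed by a genuinely $G$-equivariant Lipschitz map. The first thing to record is that domination preserves loxodromic elements: if $g\in G$ acts loxodromically in a cobounded hyperbolic action which is dominated by $G\curvearrowright Z$, then $g$ acts loxodromically on $Z$. Indeed, in the Cayley graph picture $G\curvearrowright X\preceq G\curvearrowright Z$ means $\sup_{z\in S_Z}|z|_{S_X}<\infty$ for the corresponding generating sets $S_X,S_Z$, so $|g^n|_{S_X}\le C|g^n|_{S_Z}$ for all $n$, whence linear growth of $|g^n|_{S_X}$ forces linear growth of $|g^n|_{S_Z}$. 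Applying this to $a$ through $X\preceq Z$ and to $b$ through $Y\preceq Z$, we obtain that both $a$ and $b$ act loxodromically on $Z$.

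Next I would exploit that $a$ and $b$ commute. Since $b$ conjugates $a$ to itself, $b$ preserves the fixed-point set $\{a^+,a^-\}\subseteq\partial Z$ of $a$; a loxodromic isometry cannot interchange a pair of boundary points it preserves (otherwise its square would fix both and hence equal its own fixed-point pair $\{b^+,b^-\}$, so $b$ itself would fix $a^+$ and $a^-$), so $b$ fixes $a^+$ and $a^-$ individually and therefore $\{b^+,b^-\}=\{a^+,a^-\}$. Fixing a basepoint $z_0\in Z$, the orbits $\{a^nz_0\}_{n\in\Z}$ and $\{b^nz_0\}_{n\in\Z}$ are quasigeodesic lines with uniform constants (as $a,b$ are loxodromic) limiting to the common pair $a^\pm=b^\pm$, so by stability of quasigeodesics in the hyperbolic space $Z$ they lie within bounded Hausdorff distance of a fixed geodesic from $a^-$ to $a^+$, hence within bounded Hausdorff distance $D$ of each other.

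Finally I would bring in the action on $X$. Let $f\colon Z\to X$ be the $G$-equivariant Lipschitz map witnessing $G\curvearrowright X\preceq G\curvearrowright Z$, and set $x_0:=f(z_0)$. Because $b$ acts elliptically on $X$, the orbit $\{b^nx_0\}=\{f(b^nz_0)\}$ is bounded, so $f$ carries the quasi-axis $\{b^nz_0\}$ of $b$ into a bounded subset of $X$. Since the quasi-axis $\{a^nz_0\}$ of $a$ in $Z$ is within Hausdorff distance $D$ of that of $b$ and $f$ is Lipschitz, $f$ also carries $\{a^nz_0\}$ into a bounded subset of $X$. But by $G$-equivariance $f$ carries $\{a^nz_0\}$ exactly onto the orbit $\{a^nx_0\}$, which is \emph{unbounded} because $a$ acts loxodromically on $X$ --- a contradiction. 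Hence no such $Z$ exists.

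I expect the only genuinely non-routine ingredient to be the claim in the second paragraph that two commuting loxodromic isometries of a (possibly non-proper) geodesic hyperbolic space share their endpoints at infinity and have fellow-travelling quasi-axes; this is standard, but one must be careful not to invoke properness of $Z$ and to keep the quasigeodesic constants uniform. Everything else is bookkeeping with the definition of the order $\preceq$ and with the upward preservation of loxodromic elements under domination, the latter being immediate from the definition.
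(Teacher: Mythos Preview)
Your proof is correct and follows essentially the same route as the paper's: both argue by contradiction, show $a$ and $b$ are loxodromic on $Z$, use commutation to conclude their quasi-axes in $Z$ have the same endpoints and hence are Hausdorff close, and then push through the (coarsely) equivariant Lipschitz map $Z\to X$ to contradict the boundedness of the $b$-orbit against the unboundedness of the $a$-orbit in $X$. The only cosmetic difference is that you first pass to Cayley graphs via \v{S}varc--Milnor to obtain a genuinely $G$-equivariant Lipschitz map, whereas the paper works directly with a coarsely equivariant map and carries the extra additive constant through the final inequality; neither choice affects the argument.
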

 
The lemma applies to give the proof for most of the groups $G$ mentioned Theorem \ref{mainthm}. A notable exception is when $G$ is the mapping torus of an Anosov map of the torus. Although the proof of Theorem \ref{mainthm} reduces to Lemma \ref{lem:mainlem} in most cases, the methods of proof in each case are quite different. Moreover, the difficulty of each case varies immensely. For right-angled Artin groups, Baumslag-Solitar groups, and solvable groups, the proofs are algebraic and relatively straightforward. For mapping class groups and fundamental groups of flip graph manifolds, the proofs are much more complicated and involve the \textit{quasi-trees of metric spaces} of Bestvina--Bromberg--Fujiwara (\cite{bbf}). In the case of mapping class groups, much of the relevant work was done in \cite{scl}, while in the case of flip graph manifolds we build up the relevant quasi-trees and actions mostly from scratch. The application of the Bestvina--Bromberg--Fujiwara machinery in this case may be of independent interest.

\subsection{Organization}

In Section \ref{sec:background}, we give necessary background on hyperbolic structures on groups, quasimorphisms, and quasi-trees of metric spaces. After this, the remaining sections of the paper may be read independently of each other.

We introduce confining subsets and their connections with quasi-parabolic structures in Section \ref{sec:confsets}. This material is used only in Section \ref{sec:anosov}. We introduce the quasi-tree of metric spaces machinery of Bestvina--Bromberg--Fujiwara in Section \ref{sec:bbf}. This material is used only in Sections \ref{sec:mcgs} and \ref{sec:flipgraph}.

In Section \ref{sec:mainlem}, we prove Lemma \ref{lem:mainlem} and give the proof of Theorem \ref{mainthm} for right-angled Artin groups, Baumslag-Solitar groups, and solvable groups.  These are the cases in which the application of Lemma \ref{lem:mainlem} is most straightforward; the proofs are all algebraic.

In Section \ref{sec:mcgs}, we  apply  the Bestvina--Bromberg--Fujiwara machinery and prove all of our results on mapping class groups.
In Section \ref{sec:flipgraph}, we apply the Bestvina--Bromberg--Fujiwara machinery to prove Theorem \ref{mainthm} for flip graph manifold groups.
In Section \ref{sec:hyp3man}, we use Dehn filling to prove Theorem \ref{mainthm} for cusped hyperbolic 3--manifold groups.
In Section \ref{sec:anosov} we completely describe $\H(G)$ when $G$ is an Anosov mapping torus group, and thus prove Theorem \ref{mainthm} in this case as well.

\textbf{Acknowledgements.} We would like to thank Jason Behrstock and Yair Minsky for helpful conversations related to this work. The first author was partially supported by NSF Awards DMS-1803368 and DMS-2106906. The second author was partially supported by NSF Award DMS-1610827.

\section{Background}\label{sec:background}

\subsection{Actions on hyperbolic spaces} \label{sec:actions}

Given a metric space $X$, we denote by $d_X$ the distance function on $X$. A map $f\colon X\to Y$ between metric spaces $X$ and $Y$ is a \emph{quasi-isometric embedding} if there is a constant $C>0$ such that for all $x,y\in X$, \[\frac1C d_X(x,y)-C\leq d_Y(f(x),f(y))\leq  Cd_X(x,y)+C.\] If, in addition, $Y$ is contained in the $R$--neighborhood of the image $f(X)$ for some $R>0$, then $f$ is called a \emph{quasi-isometry}. If $f\colon X\to Y$ satisfies only \[d_Y(f(x),f(y))\leq Cd_X(x,y)+C,\] then $f$ is called \textit{$C$--coarsely Lipschitz}. If a group $G$ acts (by isometries) on $X$ and $Y$, then a map $f\colon X\to Y$ is \emph{coarsely $G$--equivariant} if for every $x\in X$ we have \[\sup_{g\in G} d_Y(f(gx),gf(x))<\infty.\]

We will assume that all actions in this paper are by isometries. The action of a group $G$ on a metric space $X$ is \emph{cobounded} if for some (equivalently any) $x\in X$ there exists $R>0$ such that $X=B_R(Gx)$ where $Gx$ denotes the orbit of $x$ under $G$.

Given an action $G\curvearrowright X$ with $X$ hyperbolic, an element $g\in G$ is \emph{elliptic} if it has bounded orbits; \emph{loxodromic} if the map $\Z\to X$ given by $n\mapsto g^nx_0$ for some (equivalently, any) $x_0\in X$ is a quasi-isometric embedding; and \emph{parabolic} otherwise.

Any group action on a hyperbolic space falls into one of finitely many types depending on the number of fixed points on the boundary and the types of isometries defined by various group elements. This classification was described by Gromov in \cite{gro}. The action $G\curvearrowright X$ (where $X$ is hyperbolic) is
\begin{itemize}
\item \textit{elliptic} if $G$ has a bounded orbit in $X$;
\item \textit{lineal} if $G$ fixes two points of $\partial X$;
\item \textit{parabolic} if $G$ fixes a unique point of $\partial X$ and no element of $G$ acts as a loxodromic isometry of $X$;
\item \textit{quasi-parabolic} if $G$ fixes a unique point of $\partial X$ and at least one element of $G$ acts as a loxodromic isometry; and
\item \textit{general type} if $G$ doesn't fix any point of $\partial X$ and at least one element of $G$ acts as a loxodromic isometry.
\end{itemize}

\subsection{Hyperbolic structures} \label{sec:hypstructures}

In this section, we review the construction of the poset of hyperbolic structures of a group from \cite{abo}.  Fix a group $G$. For any (possibly infinite) generating set $S$ of $G$,  let $\Gamma(G,S)$ be the Cayley graph of $G$ with respect to the generating set $S$, and let $\|\cdot\|_S$ denote the word norm on $G$ with respect to $S$.
Given two generating sets $S,T$ of a group $G$, we say $T$ is \emph{dominated by} $S$, written  $T\preceq S$, if \[\sup_{g\in S}\|g\|_T<\infty.\]  It is clear that $\preceq$ is a preorder on the set of generating sets of $G$ and so induces the equivalence relation $S\sim T$ if and only if $T\preceq S$ and $S\preceq T$. Let $[S]$ be the equivalence class of a generating set. Then the preorder $\preceq$ induces a partial order  $\preccurlyeq$ on the set of all equivalence classes of generating sets of $G$ via $[S]\preccurlyeq [T]$ if and only if $S\preceq T$.


\begin{defn}
Given a group $G$, the \emph{poset of hyperbolic structures on $G$} is defined to be \[\mathcal H(G):= \{[S]\mid G=\langle S\rangle \textrm{ and } \Gamma(G,S) \textrm{ is hyperbolic}\},\] equipped with the partial order $\preccurlyeq$.
\end{defn}

Notice that since hyperbolicity is a quasi-isometry invariant of geodesic metric spaces, the above definition is independent of the choice of representative of the equivalence class $[S]$. Every element $[S]\in\mc H(G)$ gives rise to a cobounded action on a hyperbolic space, namely $G\curvearrowright \Gamma(G,S)$. Moreover, given a cobounded action on a hyperbolic space $G\curvearrowright X$, a standard Schwarz--Milnor argument (see \cite[Lemma~3.11]{abo}) provides a (possibly infinite) generating set $S$ of $G$ such that $\Gamma(G,S)$ is equivariantly quasi-isometric to $X$. We say that two actions $G\curvearrowright X$ and $G\curvearrowright Y$ are \emph{equivalent} if there exists a coarsely $G$--equivariant quasi-isometry $X\to Y$. By \cite[Proposition~3.12]{abo}, there is a one-to-one correspondence between equivalence classes $[S]\in\mathcal H(G)$ and equivalence classes of cobounded actions $G\curvearrowright X$ with $X$ hyperbolic. The partial order on cobounded actions is given by $G\curvearrowright X \preceq G\curvearrowright Y$ if there exists a coarsely $G$--equivariant coarsely Lipschitz map $Y\to X$. This descends to a partial order $\preccurlyeq$ on equivalence classes $[G\curvearrowright X]$. Thus, the partial order captures the informal relation of ``collapsing equivariant families of subspaces.'' It is not hard to check that if $G$ is hyperbolic then the equivalence class of the action of $G$ on its Cayley graph with respect to any finite generating set is largest.

We denote the set of equivalence classes of cobounded elliptic, lineal, quasi-parabolic, and general-type actions by $\mc H_e,\mc H_\ell,\mc H_{qp},$ and $\mc H_{gt}$, respectively. Since parabolic actions cannot be cobounded, we have for any group $G$, \[\mc H(G)=\mc H_e(G)\sqcup\mc H_\ell(G)\sqcup\mc H_{qp}(G)\sqcup\mc H_{gt}(G).\]

\noindent A lineal action of a group $G$ on a hyperbolic space $X$ is \emph{orientable} if no element of $G$ permutes the two limit points of $G$ on $\partial X$. We denote the set of equivalence classes of orientable lineal structures on $G$ by $\mc H_\ell^+(G)$.

\subsection{Quasimorphisms}

A map $q\colon G\to \R$ is a \emph{quasimorphism} if there exists a constant $D\geq 0$ such that for all $g,h\in G$, we have $|q(gh)-q(g)-q(h)|\leq D$. We say that $q$ has \emph{defect at most $D$}. If, in addition, the restriction of $q$ to every cyclic subgroup is a homomorphism, then $q$ is called a \emph{homogeneous} quasimorphism. Every quasimorphism $q$ gives rise to a homogeneous quasimorphism $\rho$ defined by $\rho(g)=\lim_{n\to\infty}\frac{q(g^n)}{n}$; we call $\rho$ the \emph{homogenization of $q$}. Every homogeneous quasimorphism is constant on conjugacy classes. If $q$ has defect at most $D$, then it is straightforward to check that $|q(g)-\rho(g)|\leq D$ for all $g\in G$.

Let $G\curvearrowright X$ be an action on a hyperbolic space with a global fixed point $\xi\in\partial X$. For any sequence $\mathbf x=(x_n)$  in $X$ converging to $\xi$ and any fixed basepoint $s\in X$, we define the associated quasimorphism $q_{\bf x}\colon G\to \mathbb R$ as follows. For all $g\in G$, \begin{equation*} \label{eqn:qchar} q_\mathbf x(g)=\limsup_{n\to\infty}(d_X(gs,x_n)-d_X(s,x_n)). \end{equation*}
Its homogenization $\rho_\mathbf x\colon G\to\mathbb R$ is the \emph{Busemann quasimorphism}. It is known that for any two sequences $\mathbf x,\mathbf y$ converging to $\xi$,  we have $\sup_{g\in G}|q_\mathbf x(g)-q_\mathbf y(g)|<\infty$, and thus we may drop the subscript $\mathbf x$ in $\rho_\mathbf x$. If  $\rho$ is a homomorphism, then the action $G\curvearrowright X$ is called \emph{regular}.

In this paper, we will repeatedly make use of one particular construction of a quasimorphism. Given an action of a group $G$ on a hyperbolic metric space $X$ and $g\in G$ which is loxodromic with respect to the action on $X$, there is a quasimorphism $q$ on $G$ associated to $g$ defined by Bestvina--Fujiwara in \cite{bchmcg} which we call a \textit{Brooks quasimorphism}. In general the quasimorphism $q$ may be badly behaved. For this reason, one must usually impose further dynamical restrictions on the isometry $g$.

\begin{defn}
Let $g\in G$ be loxodromic in the action on $X$ with fixed points $\{g^\pm\} \subset \partial X$. We say that $g$ is \textit{WWPD} if the orbit of $(g^+,g^-)$ under $G$ is discrete in the space $\partial X \times \partial X \setminus \Delta$ where $\Delta$ is the diagonal $\Delta=\{(x,x):x\in \partial X\}$. If $g$ is WWPD then we say that it is \textit{WWPD${}^+$} if, given $h\in G$ such that $h$ fixes the fixed points $g^\pm$ of $g$ as a \textit{set}, we also have $hg^+=g^+$ and $hg^-=g^-$.
\end{defn}

See \cite{wwpd} for this specific formulation of the WWPD property (there called WWPD (2)) and for several other equivalent definitions of WWPD. The WWPD${}^+$ property is strong enough to define a \textit{well-behaved} Brooks quasimorphism (see \cite[Corollary~3.2]{scl}):

\begin{restatable}{prop}{qmconstruction}
\label{prop:qmconstruction}  Let $G\curvearrowright X$ be an action of $G$ on a hyperbolic metric space $X$ with a WWPD${}^+$ element $g$. Then there is a homogeneous quasimorphism $q\colon G\to\R$ such that the following hold:
\begin{enumerate}
\item $q(g)\neq 0$; and
\item $q(h)=0$ for any element $h\in G$ which acts elliptically on $X$.
\end{enumerate}
\end{restatable}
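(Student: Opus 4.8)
The plan is to build the desired quasimorphism as a Brooks-type counting quasimorphism associated to the loxodromic element $g$, and then use the WWPD${}^+$ hypothesis to show it behaves well. First I would fix a quasi-geodesic axis for $g$ in $X$, say the orbit map $n\mapsto g^n s$ for a basepoint $s$, giving a $g$-invariant quasi-geodesic $\gamma$. Following Bestvina--Fujiwara, one defines for each $h\in G$ a count $c_h$ of the number of ``copies'' of a long initial segment of $\gamma$ that appear along a quasi-geodesic from $s$ to $hs$, and then sets $q_0(h)=c_h(\gamma)-c_h(\bar\gamma)$ where $\bar\gamma$ is the reverse. The raw function $q_0$ is a quasimorphism on $G$ by the standard Bestvina--Fujiwara argument (this uses only hyperbolicity of $X$ and that $g$ is loxodromic); let $q$ be its homogenization. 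The content is in verifying the two numbered properties.

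For property (1), that $q(g)\neq 0$: this is the usual computation showing that along the axis $\gamma$ one sees roughly $n$ disjoint translated copies of the chosen fundamental segment in a path from $s$ to $g^n s$, while one sees boundedly many copies of the reversed segment, provided the segment is chosen long enough that $\gamma$ and $\bar\gamma$ do not fellow-travel for that length --- here one needs that $g$ is not conjugate to its inverse along the axis, or more precisely that $\gamma$ is not ``reversible,'' which is exactly where a WWPD-type hypothesis is convenient since it controls the stabilizer of the pair $\{g^+,g^-\}$. So $q_0(g^n)$ grows linearly in $n$, hence $q(g)=\lim q_0(g^n)/n\neq 0$.

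For property (2), that $q$ vanishes on elliptics: if $h$ acts elliptically, then $h$ has bounded orbits, so $h^n s$ stays within a bounded set; a quasi-geodesic from $s$ to $h^n s$ has bounded length, hence can contain at most a bounded number of copies of the (long, fixed-length) fundamental segment, uniformly in $n$. Therefore $q_0(h^n)$ is bounded independently of $n$, and $q(h)=\lim q_0(h^n)/n=0$. This part is essentially formal once the counting function is in place.

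The main obstacle --- and the reason WWPD${}^+$ (rather than bare loxodromicity) is needed --- is ensuring that the counting function is not ruined by ``accidental'' long matches: one must guarantee that the only group elements realizing long overlaps between a translate $h\gamma$ and $\gamma$ are those lying in (a bounded neighborhood of) the cyclic group $\langle g\rangle$, and that no element flips the axis. WWPD gives the first (sequences $h_n$ approximately fixing $g^\pm$ must eventually genuinely fix them, ruling out arbitrarily long spurious overlaps), and the extra WWPD${}^+$ clause rules out orientation-reversing stabilizers of $\{g^+,g^-\}$, which is what makes $q_0(g)\neq q_0(g^{-1})$ survive homogenization. Rather than redo all of this, I would cite \cite[Corollary~3.2]{scl} for the construction and the verification of both properties, since the WWPD${}^+$ hypothesis is imported precisely to invoke that result; the proof then reduces to checking that our situation satisfies the hypotheses of that corollary, namely that $g$ is WWPD${}^+$ and loxodromic, which is given.
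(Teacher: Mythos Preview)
Your proposal is correct and aligns with the paper's own treatment: the paper does not give an independent proof but simply notes that Proposition~\ref{prop:qmconstruction} is an immediate corollary of \cite[Corollary~3.2]{scl}, which is exactly the citation you invoke at the end. Your additional sketch of the Brooks-type construction and the role of the WWPD${}^+$ hypothesis is accurate background, but strictly more than the paper provides.
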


Although the language used in \cite[Corollary~3.2]{scl} is slightly different, Proposition \ref{prop:qmconstruction} is an immediate corollary (and, in fact, \cite[Corollary~3.2]{scl} is stronger than what we stated).

We will use the following lemma several times in this paper:

\begin{lem}[{\cite[Lemma~4.15]{abo}}]
\label{lem:quasiline}
Let $q:G\to \R$ be a nonzero homogeneous quasimorphism. Then there is an action of $G$ on a quasi-line $X$ with the property that $g$ acts loxodromically on $X$ if and only if $q(g)\neq 0$.
\end{lem}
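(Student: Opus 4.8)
The plan is to realize the desired action as the left-multiplication action of $G$ on a suitable Cayley graph, using the quasimorphism $q$ itself as the quasi-isometry onto $\R$. Since $q\not\equiv 0$, fix $g_0\in G$ with $t_0:=q(g_0)\neq 0$, and let $D$ be the defect of $q$. Set
\[
S:=\{g\in G: |q(g)|\leq C\},\qquad C:=|t_0|+D+1,
\]
and let $X:=\Gamma(G,S)$, with $G$ acting by left multiplication; note $S$ is symmetric and contains $1$ since $q(g^{-1})=-q(g)$ and $q(1)=0$.

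First I would check that $S$ generates $G$. Given $g\in G$, choose $n\in\Z$ with $|q(g)-nt_0|\leq |t_0|$; applying the quasimorphism inequality to $g$ and $g_0^{-n}$ and using homogeneity ($q(g_0^{-n})=-nt_0$) gives $|q(gg_0^{-n})|\leq |t_0|+D<C$, so $gg_0^{-n}\in S$, and since $g_0\in S$ as well we obtain $g=(gg_0^{-n})g_0^{\,n}\in\langle S\rangle$. The same computation yields the upper bound $d_S(1,g)\leq 1+|n|\leq 2+|q(g)|/|t_0|$. For the matching lower bound, if $g=s_1\cdots s_k$ with each $s_i\in S$ and $k=d_S(1,g)$, then iterating the defect inequality gives $|q(g)|\leq \sum_i|q(s_i)|+(k-1)D\leq k(C+D)$, hence $d_S(1,g)\geq |q(g)|/(C+D)$. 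Thus $d_S(1,g)\asymp |q(g)|$ with uniform constants, and since $|q(g^{-1}h)-(q(h)-q(g))|\leq D$ the map $g\mapsto q(g)$ is a quasi-isometric embedding $X\to\R$; its image contains $t_0\Z$ and is therefore coarsely dense, so $X$ is quasi-isometric to $\R$, i.e.\ a quasi-line (in particular hyperbolic), and the action is cobounded because it is vertex-transitive.

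Finally I would read off the loxodromic characterization: for any $g\in G$, homogeneity gives $d_S(1,g^n)\asymp |q(g^n)|=|n|\cdot|q(g)|$, so if $q(g)=0$ the orbit of $g$ is bounded and $g$ is not loxodromic, while if $q(g)\neq 0$ the map $n\mapsto g^n$ is a quasi-isometric embedding and $g$ is loxodromic. This is precisely the asserted equivalence. I expect the only genuine subtlety to be the verification that $S$ generates $G$ — the key being that homogeneity allows one to peel off powers of the single fixed element $g_0$ to push an arbitrary element of $G$ into the sublevel set $S$ — together with the bookkeeping needed to keep all quasi-isometry constants independent of $g$; the remaining estimates are routine applications of the quasimorphism inequality.
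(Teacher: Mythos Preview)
Your argument is correct. The paper does not supply its own proof of this lemma; it simply cites \cite[Lemma~4.15]{abo}, so there is no in-paper proof to compare against. The construction you give---taking $X=\Gamma(G,S)$ for the sublevel set $S=\{g:|q(g)|\le C\}$ and using $q$ itself as the quasi-isometry to $\R$---is exactly the standard one, and your verification of the generating property, the two-sided distance estimate $d_S(1,g)\asymp|q(g)|$, coarse density of the image, and the loxodromic characterization via homogeneity are all sound.
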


\subsection{Confining subsets}
\label{sec:confsets}

Consider a group $G=H\rtimes_{\alpha} \Z$ where $\alpha\in \Aut(H)$ acts by $\alpha(h)=tht^{-1}$ for any $h\in H$, where $t$ is a generator of $\Z$. Let $Q$ be a \textit{symmetric} subset of $H$.  The following definition is from \cite[Section~4]{ccmt}.

\begin{defn} \label{def:confining}
The action of $\alpha$ is \emph{(strictly) confining $H$ into $Q$} if it satisfies the following three conditions.
\begin{enumerate}[(a)]
\item $\alpha(Q)$ is (strictly) contained in $Q$;
\item $H=\bigcup_{k\geq 0} \alpha^{-k}(Q)$; and
\item $\alpha^{k_0}(Q\cdot Q)\subseteq Q$ for some $k_0\in \Z_{\geq 0}$.
\end{enumerate}
\end{defn}

\begin{rem}
The definition of confining subset given in \cite{ccmt} does not require symmetry of the subset $Q\subset H$. However, according to \cite[Theorem~4.1]{ccmt}, to classify regular quasi-parabolic structures on a group of the above form, it suffices to consider only confining subsets which are symmetric. \end{rem}

\begin{rem} By the discussion after the statement of \cite[Theorem~4.1]{ccmt}, if there is a subset $Q\subseteq H$ such that the action of $\alpha$ is confining $H$ into $Q$ but not strictly confining, then $[Q\cup\{t^{\pm 1}\}]\in \mathcal H_\ell^+(G)$. If the action is strictly confining, then $[Q\cup\{t^{\pm 1}\}]\in \mathcal H_{qp}(G)$.
\end{rem}

In this paper, we will focus primarily on describing subsets $Q$ of $H$ into which the action of $\alpha$ is (strictly) confining $H$. For brevity, we will refer to such $Q$ as \emph{(strictly) confining under the action of $\alpha$.}

\subsection{Quasi-trees of metric spaces}
\label{sec:bbf}

In this section, we review the construction of a projection complex and a quasi-tree of metric spaces from \cite{bbf}.  We begin by giving a canonical example to keep in mind.  Let $G=\pi_1(\Sigma)$ where $\Sigma$ is a closed, hyperbolic surface.  Fix  a simple closed geodesic $\gamma$ on $\Sigma$, and let $\bf Y$ be the set of lifts of $\gamma$ to the universal cover $\mathbb H^2$.  Then for any $Y,Z\in\bf Y$, the nearest-point projection $\pi_Y(Z)$ of $Y$ to $Z$ is uniformly bounded.  Moreover, if $X,Y,Z\in\bf Y$ and the projections of $\pi_Y(X)$ and $\pi_Y(Z)$ are far apart in $Y$, then the projections of $X$ and $Y$ to $Z$ are coarsely equal.  After slightly perturbing the projection distances $d^\pi_Y(X,Z)=\diam(\pi_Y(X)\cup \pi_Y(Z))$ to a distance $d_Y$, we can build the \emph{projection complex} $\mathcal P_K({\bf Y})$ for a fixed large constant $K$, which has vertices $Y\in\bf Y$ and an edge between $X, Z\in\bf Y$ if $d_Y(X,Z)$ is uniformly bounded for every $Y\in\mathbf{Y} \setminus \{X,Z\}$.  It is shown in \cite{bbf} that $\mathcal P_K(\bf Y)$ is a quasi-tree with a $G$--action.  From this quasi-tree, the \emph{quasi-tree of metric spaces} $\mc C_K({\bf Y})$ is formed by replacing each vertex labeled by $Y\in\bf Y$ with the space $Y$.

We now give the general construction from \cite{bbf}.  There are two main differences to keep in mind.  First, the ``projection" map which we will define does not have to have a geometric interpretation as an actual nearest-point projection; it will simply be a map satisfying certain axioms.  Second,  in general we have an index set $\mathbf Y$, and to each $Y\in\mathbf Y$ we associate a space $\mc C(Y)$.  In the example above, elements $Y$ of the index set were equated with the spaces $\mc C(Y)$.

Fix a set $\bf Y$, and for each $Y\in\bf Y$, let $\mc C(Y)$ be a geodesic metric space.  Let \[\pi_Y\colon {\bf Y}\setminus\{Y\}\to 2^{\mc C(Y)}\] be a function, which we call \emph{projection}.  
When $Y\neq X\neq Z$,  define a (pseudo-)distance function $d^\pi_Y$ by
 \[d^\pi_Y(X,Z)=\diam(\pi_Y(X)\cup\pi_Y(Z)).\]

For the rest of this section, assume that there is a constant $\theta\geq 0$ such that the following three conditions hold.
\begin{enumerate}[(P1)]
\setcounter{enumi}{-1}
\item The diameter $\diam \pi_X(Y)$ is uniformly bounded by $\theta$, independently of $X\in \mathbf{Y}$ and $Y\in \mathbf{Y}\setminus \{X\}$.
\item For any triple $X,Y,Z\in\bf Y$ of distinct elements, at most one of the three numbers \[d^\pi_X(Y,Z),\, d^\pi_Y(X,Z),\,d^\pi_Z(X,Y)\] is greater than $\theta$.
\item For any $X,Y\in\bf Y$, the set \[\{Z\in{\bf Y} \setminus \{X,Y\} : d^\pi_Z(X,Y)>\theta\}\] is finite.
\end{enumerate}

We will  modify these distances  by a bounded amount.  We first need a definition.
\begin{defn}
For $X,Z\in\bf Y$ with $X\neq Z$, let $\mathcal H(X,Z)$ be the set of pairs $(X',Z')\in\bf Y\times\bf Y$ with $X'\neq Z'$ such that one of the following four conditions holds:
\begin{itemize}
\item both $d^\pi_X(X',Z'),d^\pi_Z(X',Z')>2\theta$;
\item $X=X'$ and $d^\pi_Z(X,Z')>2\theta$;
\item $Z=Z'$ and $d^\pi_X(X',Z)>2\theta$;
\item $(X',Z')=(X,Z)$.
\end{itemize}
\end{defn}
Define the modified distance functions \[d_Y\colon ({\bf Y}\setminus\{Y\})\times({\bf Y}\setminus\{Y\})\to[0,\infty)\] 
by \[d_Y(X,Z)=\begin{cases} 0 & \textrm{if $Y$ is contained in a pair in $\mc H(X,Z)$} \\ \inf_{(X',Z')\in\mc H(X,Z)}d^\pi_Y(X',Z') & \textrm{else.}\end{cases}\] It is immediate from the definition that the modified distance functions satisfy $d_{Y}\leq d^\pi_{Y}$ for all $Y\in\bf Y$.
Suppose (P0)--(P2) are satisfied by $({\bf Y},\theta,\{d^\pi_Y\})$,  fix $K\geq \Theta$ where $\Theta=\Theta(\theta)$ is the constant from \cite[Theorem~3.3]{bbf}, and let ${\bf Y}_K(X,Z)=\{Y\in {\bf Y}: d_Y(X,Z)>K\}$.     We construct a space $\mc P_K(\bf Y)$ as follows.  
\begin{defn}
The \emph{projection complex} $\mc P_K(\bf Y)$ is the following graph.  The vertex set of $\mc P_K(\bf Y)$ is $\bf Y$.  Two distinct vertices $X$ and $Z$ are connected with an edge if ${\bf Y}_K(X,Z)=\emptyset$. Denote the distance function for this graph by $d(\cdot,\cdot)$.
\end{defn}

\begin{thm}[{\cite[Theorem~3.16]{bbf}}]
For $K$ sufficiently large, $\mc P_K(\bf Y)$ is a quasi-tree.
\end{thm}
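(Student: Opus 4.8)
**The plan is to prove Theorem 3.16 (that $\mc P_K(\bf Y)$ is a quasi-tree for $K$ sufficiently large) following the strategy of Bestvina--Bromberg--Fujiwara, by building an auxiliary "standard path" between any two vertices and verifying that these paths are uniform-quality quasi-geodesics that moreover satisfy a thin-triangles-type condition witnessing the quasi-tree property.**

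First I would set up the combinatorial backbone. Given distinct $X, Z \in \mathbf Y$, consider the finite set $\mathbf Y_K(X,Z) = \{Y : d_Y(X,Z) > K\}$ of "large" domains between them; finiteness is exactly axiom (P2) applied to the modified distances (which are dominated by $d^\pi$). The key linear-ordering lemma — this is the technical heart inherited from \cite{bbf} — says that for $K \geq \Theta(\theta)$ the elements of $\mathbf Y_K(X,Z)$ can be totally ordered as $Y_1, \dots, Y_n$ so that consecutive $Y_i, Y_{i+1}$ are joined by an edge of $\mc P_K(\bf Y)$, $X$ is joined to $Y_1$, and $Y_n$ is joined to $Z$; the ordering is governed by the Behrstock-type inequality coming from (P1), which forces that if $d_{Y_i}(X,Y_j)$ is large then $d_{Y_j}(X,Y_i)$ is small, so "betweenness" is consistent. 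This produces the \emph{standard path} $X, Y_1, \dots, Y_n, Z$ from $X$ to $Z$. The modification from $d^\pi$ to $d$ (via the sets $\mc H(X,Z)$) is precisely what makes these distances behave well enough — in particular roughly symmetric and satisfying a clean triangle-type estimate — for the ordering argument to close up; I would quote \cite[Theorem~3.3]{bbf} for the properties of $d_Y$ rather than rederive them.

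Next I would show these standard paths are geodesics, or at least uniform quasi-geodesics, and satisfy the defining feature of a tree up to bounded error: any geodesic (or any path) from $X$ to $Z$ in $\mc P_K(\bf Y)$ must pass within bounded distance of — in fact essentially through — every vertex of $\mathbf Y_K(X,Z)$, because a vertex $Y$ with $d_Y(X,Z)$ large is a "cut vertex up to error" separating $X$ from $Z$. The cleanest route is to verify Manning's bottleneck criterion: a geodesic space is quasi-isometric to a tree iff there is $\delta$ such that for every pair of points $x,y$ there is a midpoint $m$ with the property that every path from $x$ to $y$ meets the $\delta$-ball about $m$. Here, given $X$ and $Z$, take $m$ to be the middle vertex $Y_{\lceil n/2 \rceil}$ of the standard path (or $X$ itself if $n=0$ and $X,Z$ are adjacent); the separation property of large domains, together with the fact that adjacency in $\mc P_K$ forces all intermediate domains to be $K$-small, shows every path from $X$ to $Z$ must come within a uniformly bounded distance of $m$. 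Applying the bottleneck criterion then yields that $\mc P_K(\bf Y)$ is quasi-isometric to a tree, i.e.\ a quasi-tree.

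The main obstacle, and the step I would spend the most care on, is the linear-ordering lemma and the attendant proof that consecutive elements of $\mathbf Y_K(X,Z)$ really are adjacent in $\mc P_K(\bf Y)$ — equivalently, that no \emph{other} domain $W$ can have $d_W(Y_i, Y_{i+1})$ large. This requires playing the Behrstock inequality off against the finiteness axiom and the modified-distance estimates repeatedly, and choosing $K$ large relative to $\theta$ and $\Theta$ so that the various additive errors (each a bounded multiple of $\theta$) never accumulate past $K$. Since the excerpt permits me to assume everything stated earlier — in particular \cite[Theorem~3.3]{bbf} and the axioms (P0)--(P2) — in practice I would not reprove this lemma from scratch but cite \cite[Sections~3.1--3.3]{bbf} for the ordering and standard-path construction, and give in detail only the bottleneck-criterion verification, which is the part that most transparently delivers the quasi-tree conclusion in the form needed for the later quasi-tree-of-metric-spaces construction $\mc C_K(\mathbf Y)$.
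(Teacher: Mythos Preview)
The paper does not prove this theorem at all: it is stated purely as a citation of \cite[Theorem~3.16]{bbf}, with no argument given. Your proposal is a reasonable outline of the Bestvina--Bromberg--Fujiwara proof itself (standard paths via the total order on $\mathbf{Y}_K(X,Z)$, then Manning's bottleneck criterion), so you are in fact supplying more than the paper does; since the paper's ``proof'' is simply the citation, your sketch is consistent with---indeed, is---the approach being invoked.
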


We are now ready to give the construction of the quasi-tree of metric spaces. Fix a constant $L=L(K)$ as in \cite[Lemma~4.2]{bbf}.

\begin{defn}
A \emph{quasi-tree of metric spaces} is the path metric space $\mc C_K({\bf Y})$ obtained by taking the disjoint union of the metric spaces $\mc C(Y)$ for $Y\in{\bf Y}$, and, if $d(X,Z)=1$ in $\mc P_K({\bf Y})$, we attach an edge of length $L$ from every point in $\pi_X(Z)$ to every point in $\pi_Z(X)$.
\end{defn}

\begin{thm}[{\cite[Theorem~A]{bbf}}]
\label{thm:isomembed}
Suppose $\bf Y$ is a collection of geodesic metric spaces and for every $X,Y\in\bf Y$ with $X\neq Y$ we are given a subset $\pi_X(Y)\subset \C(X)$ such that (P0)-(P2) hold and $K$ is sufficiently large. Then the spaces $\C(X)$ for $X\in \bf Y$ are isometrically embedded in $\C_K(\bf Y)$. Moreover, for each distinct $X,Y\in \bf Y$, the nearest point projection of $\C(Y)$ to $\C(X)$ in $\C_K(\bf Y)$ is a uniformly bounded set uniformly close to $\pi_X(Y)$.
\end{thm}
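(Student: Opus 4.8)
This is Theorem~A of \cite{bbf}, and the plan is to recover it from the projection-complex machinery assembled above, reducing everything to a combinatorial description of rectifiable paths and geodesics in $\mathcal C_K(\mathbf Y)$. The starting point is the quasi-tree $\mathcal P_K(\mathbf Y)$ together with the ``standard path'' technology of \cite[\S3]{bbf}: given distinct $X,Z\in\mathbf Y$, the set $\mathbf Y_K(X,Z)$ is finite (it is contained in $\{W:d^\pi_W(X,Z)>\theta\}$ once $K\geq\theta$, which is finite by (P2) since $d_W\leq d^\pi_W$), and there is a canonical linear order $X=Y_0,Y_1,\dots,Y_n=Z$ on $\{X,Z\}\cup\mathbf Y_K(X,Z)$ in which consecutive $Y_i,Y_{i+1}$ are joined by an edge of $\mathcal P_K(\mathbf Y)$, the resulting standard path is an unparametrized quasigeodesic in the quasi-tree $\mathcal P_K(\mathbf Y)$, and one has the consistency estimates $d_{Y_i}(Y_j,Y_{j'})\leq\mathrm{const}$ whenever $j,j'$ lie on the same side of $i$, together with $d^\pi_{Y_i}(X,Z)$ large for $0<i<n$. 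All of this is essentially forced by the Behrstock-type inequality (P1) for the modified distances. The single consequence I will use over and over is: \emph{if $W$ lies on the standard path from $Y$ to $X$, then $\pi_X(W)$ is within a uniform constant of $\pi_X(Y)$} --- this is the consistency estimate read off at the endpoint $Y_n=X$, where every other vertex of the standard path lies on the same side of $X$.

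For the isometric embedding, the inclusion $\mathcal C(X)\hookrightarrow\mathcal C_K(\mathbf Y)$ is $1$-Lipschitz by construction of the path metric, so it suffices to show that every rectifiable path $\gamma$ in $\mathcal C_K(\mathbf Y)$ between points $a,b\in\mathcal C(X)$ has $\operatorname{length}(\gamma)\geq d_{\mathcal C(X)}(a,b)$. I would cut $\gamma$ into its maximal sub-arcs inside $\mathcal C(X)$ and its maximal ``excursions'' $\gamma|_{[s,t]}$ with $\gamma(s),\gamma(t)\in\mathcal C(X)$ and $\gamma((s,t))$ outside $\mathcal C(X)$, and show each excursion has length $\geq d_{\mathcal C(X)}(\gamma(s),\gamma(t))$; replacing each excursion by the $\mathcal C(X)$-geodesic between its endpoints then produces a path inside $\mathcal C(X)$ of no greater length. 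For a fixed excursion, $\gamma(s)\in\pi_X(W_1)$ and $\gamma(t)\in\pi_X(W_m)$, where $\mathcal C(W_1),\dots,\mathcal C(W_m)$ are the spaces visited in order, consecutive $W_i$ adjacent in $\mathcal P_K(\mathbf Y)$ and none equal to $X$, and $\gamma|_{[s,t]}$ crosses at least $m$ edges of length $L$, so $\operatorname{length}(\gamma|_{[s,t]})\geq mL$. Since each $\pi_X(W_i)$ has diameter $\leq\theta$ by (P0) and the $W_i$ form a path in $\mathcal P_K(\mathbf Y)$ avoiding $X$, the footprint of the excursion in $\mathcal C(X)$ can advance only a controlled amount per step; so $d_{\mathcal C(X)}(\gamma(s),\gamma(t))$ is bounded by a small-slope linear function of $m$, and choosing $L=L(K)$ large enough as in \cite[Lemma~4.2]{bbf} gives $mL\geq d_{\mathcal C(X)}(\gamma(s),\gamma(t))$. (When $d^\pi_X(W_1,W_m)\leq\theta$ this is immediate from $2L\geq\theta$; when $d^\pi_X(W_1,W_m)$ is large, moving that far along $\mathcal C(X)$ forces the excursion to visit $\gtrsim d^\pi_X(W_1,W_m)/\theta$ distinct spaces, each contributing a fresh $L$-edge.) In practice this estimate and a coarse distance formula for $\mathcal C_K(\mathbf Y)$ are proved by a simultaneous bootstrap, and then the exact (not merely coarse) inequality falls out.

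For the nearest-point projection statement, fix distinct $X,Y$ and a point $c\in\mathcal C(Y)$, and let $p\in\mathcal C(X)$ be a nearest point to $c$ (up to bounded error), realized by a geodesic $\sigma$. By the same excursion analysis, run now in $\mathcal C(Y)$ and $\mathcal C(X)$ simultaneously, a geodesic from $c$ to $\mathcal C(X)$ never re-enters a space it has left and its sequence of visited spaces is coarsely the standard path from $Y$ to $X$ in $\mathcal P_K(\mathbf Y)$; hence the last space $\sigma$ meets before $\mathcal C(X)$ is some $W$ on that standard path, $\sigma$ enters $\mathcal C(X)$ at a point of $\pi_X(W)$, and, since moving inside $\mathcal C(X)$ only lengthens $\sigma$, the point $p$ lies within a uniform constant of $\pi_X(W)$, hence within a uniform constant of $\pi_X(Y)$ by the consistency consequence noted above. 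Letting $c$ range over $\mathcal C(Y)$ shows the nearest-point projection of $\mathcal C(Y)$ lies in a uniformly bounded neighborhood of $\pi_X(Y)$; conversely, each point of $\pi_X(Y)$ is the nearest-point projection of the point of $\pi_Y(X)$ to which it is joined by an $L$-edge. Together these give the ``uniformly bounded and uniformly close to $\pi_X(Y)$'' conclusion.

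The main obstacle is the straightening step in the second paragraph: extracting the sharp inequality $\operatorname{length}(\gamma)\geq d_{\mathcal C(X)}(a,b)$ with no additive loss, rather than a mere quasi-isometric estimate. This is exactly where the precise dependence $L=L(K)$ and a careful count of how many distinct spaces an excursion must traverse to progress a given distance along $\mathcal C(X)$ enter, and it is the quasi-tree property of $\mathcal P_K(\mathbf Y)$ --- the absence of long embedded cycles --- that prevents an excursion from cheaply returning near its starting point in $\mathcal C(X)$. Once the distance estimates for paths between the pieces $\mathcal C(X)$ are in hand, the geodesic structure used in the third paragraph is a formal consequence, and both displayed conclusions of the theorem follow.
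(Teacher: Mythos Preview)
The paper does not prove this theorem: it is quoted verbatim as Theorem~A of \cite{bbf} and used as a black box, with no argument given. So there is nothing in the paper to compare your proposal against.

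That said, your sketch is broadly in the spirit of the actual Bestvina--Bromberg--Fujiwara argument (standard paths, excursion analysis, the distance formula), but it is only a sketch, and you yourself flag the genuine difficulty: the straightening step needs the \emph{exact} inequality $\operatorname{length}(\gamma)\geq d_{\mathcal C(X)}(a,b)$ with no additive error, and your paragraph on excursions does not actually deliver this. Saying ``the footprint advances only a controlled amount per step'' and ``choosing $L$ large enough'' is exactly the point that requires the careful bootstrap in \cite[\S4]{bbf}; what you have written is a plausibility argument, not a proof. Likewise, the claim that a geodesic's sequence of visited spaces is ``coarsely the standard path'' is the content of several lemmas in \cite{bbf}, not something that follows from what you have set up. If you want a self-contained proof here, you would need to reproduce a substantial portion of \cite[\S3--4]{bbf}; for the purposes of this paper, citing the result is the appropriate move.
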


The following is straightforward to verify:

\begin{thm}
Suppose that $G$ is a group which acts on the set $\bf{Y}$ such that for each $X\in \bf{Y}$ there is an isometry $F_g^X:\C(X)\to \C(g(X))$ and the isometries $F_g^X$ satisfy:
\begin{itemize}
\item if $g,h\in G$ and $X\in \bf{Y}$, then $F_h^{g(X)}\circ F_g^X=F_{hg}^X$; and
\item if $X,Y\in \bf{Y}$, then $F_g^Y(\pi_Y(X))=\pi_{g(Y)}(g(X))$.
\end{itemize}
Then there is an induced action of $G$ on $\C_K(\bf{Y})$ by isometries.
\end{thm}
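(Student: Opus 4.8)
The plan is to construct the action of $G$ on $\C_K(\mathbf{Y})$ directly from the building blocks. Recall that $\C_K(\mathbf{Y})$ is assembled as a path metric space from the disjoint union $\bigsqcup_{Y\in\mathbf{Y}}\C(Y)$ together with connecting edges of length $L$ joining points of $\pi_X(Z)$ to points of $\pi_Z(X)$ whenever $d(X,Z)=1$ in the projection complex $\mathcal P_K(\mathbf{Y})$. So a prospective isometry of $\C_K(\mathbf{Y})$ should do two things: permute the pieces $\C(Y)$ compatibly, and carry connecting edges to connecting edges isometrically. First I would fix $g\in G$ and define a map $\Phi_g\colon\C_K(\mathbf{Y})\to\C_K(\mathbf{Y})$ by letting $\Phi_g$ restrict to $F_g^X\colon\C(X)\to\C(g(X))$ on each piece $\C(X)$, and sending the length-$L$ edge joining $x\in\pi_X(Z)$ to $z\in\pi_Z(X)$ (present when $d(X,Z)=1$) linearly and isometrically onto the length-$L$ edge joining $F_g^X(x)$ to $F_g^Z(z)$.

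The second bullet hypothesis, $\pi_Y(X)=\pi_{g(Y)}(g(X))$, is exactly what makes $\Phi_g$ well-defined on edges: since $F_g^X$ maps $\pi_X(Z)$ into $\C(g(X))$, I need to know its image is $\pi_{g(X)}(g(Z))$ so that the target edge actually exists in $\C_K(\mathbf{Y})$. But one must also check that the $G$-action on $\mathbf{Y}$ descends to an action on $\mathcal P_K(\mathbf{Y})$ that preserves the edge relation --- i.e.\ that $d(X,Z)=1$ iff $d(g(X),g(Z))=1$. This follows because the quantities $d^\pi_Y(X,Z)=\diam(\pi_Y(X)\cup\pi_Y(Z))$ are $G$-invariant: from the second bullet, $\pi_{g(Y)}(g(X))=\pi_Y(X)$ and $\pi_{g(Y)}(g(Z))=\pi_Y(Z)$, so $d^\pi_{g(Y)}(g(X),g(Z))=d^\pi_Y(X,Z)$; the modified distances $d_Y$ are defined purely in terms of the $d^\pi$'s and the combinatorics of $\mathcal H(X,Z)$, hence are likewise $G$-invariant, so $\mathbf{Y}_K(g(X),g(Z))=g(\mathbf{Y}_K(X,Z))$ and the edge relation is preserved. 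Consequently $\Phi_g$ permutes the connecting edges correctly and is a bijection of $\C_K(\mathbf{Y})$ onto itself; being an isometry on each piece and on each edge, and preserving incidence, it is an isometry of the path metric.

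Next I would verify the cocycle/homomorphism property. For $g,h\in G$ the first bullet gives $F_h^{g(X)}\circ F_g^X=F_{hg}^X$ on each $\C(X)$, and the corresponding identity on connecting edges is immediate since $\Phi_g,\Phi_h$ act linearly on edges via these isometries. Hence $\Phi_h\circ\Phi_g=\Phi_{hg}$, and in particular $\Phi_e=\mathrm{id}$ (taking $g=h=e$, assuming $F_e^X=\mathrm{id}$, which is forced by the cocycle relation with $g=h=e$), so each $\Phi_g$ is invertible with inverse $\Phi_{g^{-1}}$. This exhibits $g\mapsto\Phi_g$ as a homomorphism $G\to\Isom(\C_K(\mathbf{Y}))$, which is the desired action.

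I do not expect a serious obstacle here --- the theorem is flagged as "easy to verify" --- but the one point demanding care is the well-definedness of $\Phi_g$ at the interface between pieces and connecting edges: one must confirm both that each connecting edge of $\C_K(\mathbf{Y})$ is sent to a genuine connecting edge (requiring $G$-invariance of the $d_Y$ and of the sets $\pi_X(Z)$ under the $F_g$, via the two bullets) and that the piece maps $F_g^X$ and the edge maps agree at the endpoints $x\in\pi_X(Z)\subset\C(X)$ where an edge is glued on. Both are routine consequences of the two hypotheses, but they are the crux of the verification, so I would state them explicitly before concluding.
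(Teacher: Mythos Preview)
The paper does not actually prove this theorem; it is introduced with the sentence ``The following is easy to verify'' and no argument is given. Your proposal supplies precisely the routine verification the paper omits, and it is correct: you build $\Phi_g$ piece-by-piece via the $F_g^X$, use the second hypothesis (read, as you do, as $F_g^Y(\pi_Y(X))=\pi_{g(Y)}(g(X))$) to see that the projection data, the distances $d^\pi_Y$, the modified distances $d_Y$, and hence the edge relation in $\mathcal P_K(\mathbf{Y})$ are all $G$-equivariant so that connecting edges go to connecting edges, and then invoke the first hypothesis for $\Phi_h\circ\Phi_g=\Phi_{hg}$. This is exactly the expected argument.
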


\noindent We will frequently denote the isometry $F_g^X$ simply by $g$.

The quasi-trees of metric spaces $\C(\mathbf{Y})$ have nice geometric properties when the geodesic metric spaces $\C(Y)$ have these properties \textit{uniformly}. To state the next theorem, recall Manning's \textit{bottleneck criterion} (\cite[Theorem~4.6]{pseudochar}): the geodesic metric space $X$ is a quasi-tree if and only if there exists $\Delta\geq 0$ with the following property. Let $x,y\in X$, let $\gamma$ be a geodesic from $x$ to $y$, and let $z$ be the midpoint of $\gamma$. Then any continuous path from $x$ to $y$ passes through the $\Delta$--neighborhood of $z$. The constant $\Delta$ is called a \textit{bottleneck} constant for $X$.

\begin{thm}[{\cite[Theorem~4.14]{bbf}}]
\label{thm:bbfqtree}
Suppose that all $\C(Y)$ for $Y\in \mathbf{Y}$ are quasi-trees with a uniform bottleneck constant $\Delta$. Then $\C_K(\mathbf{Y})$ is a quasi-tree for $K$ large enough.
\end{thm}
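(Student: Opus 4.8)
The plan is to verify Manning's bottleneck criterion, recalled just above the statement, directly for $\C_K(\mathbf{Y})$: I will produce a constant $\Delta'$ depending only on $\Delta$, $\theta$, $K$ and $L$ such that for any $x,y\in\C_K(\mathbf{Y})$, any geodesic $\gamma$ from $x$ to $y$ with midpoint $m$, and any continuous path $p$ from $x$ to $y$, the path $p$ meets the $\Delta'$-ball about $m$. First I would reduce to the case $x\in\C(X)$ and $y\in\C(Z)$ for some $X,Z\in\mathbf{Y}$: a point lying on one of the length-$L$ connecting edges is within $L$ of some $\C(Y)$, so moving $x$ and $y$ onto vertex spaces changes the relevant quantities by at most an additive constant. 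The organizing tool throughout is the natural coarsely Lipschitz projection $\Pi\colon\C_K(\mathbf{Y})\to\mathcal{P}_K(\mathbf{Y})$ collapsing each $\C(Y)$ to the vertex $Y$ and each connecting edge to the corresponding edge; recall $\mathcal{P}_K(\mathbf{Y})$ is itself a quasi-tree for $K$ large.

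Next I would locate the midpoint. Using the Bestvina--Bromberg--Fujiwara analysis of geodesics in $\C_K(\mathbf{Y})$ (the refinement of Theorem \ref{thm:isomembed} via standard paths and the resulting distance estimate from \cite{bbf}), $\Pi(\gamma)$ is an unparametrized quasigeodesic of uniform quality in $\mathcal{P}_K(\mathbf{Y})$, and since quasigeodesics in a quasi-tree are uniformly Morse, the vertex $Y:=\Pi(m)$ lies within a uniform distance of the geodesic $[X,Z]$ of $\mathcal{P}_K(\mathbf{Y})$, while $\gamma\cap\C(Y)$ is, up to bounded Hausdorff error, a geodesic segment $\sigma$ of $\C(Y)$ running from near the gate $\pi_Y(X)$ to near the gate $\pi_Y(Z)$, with $m\in\sigma$. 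There are then two regimes: either $Y$ is within uniformly bounded $\mathcal{P}_K(\mathbf{Y})$-distance of $X$ or of $Z$, or $Y$ is ``well inside'' $[X,Z]$.

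In the main regime, where $Y$ is well inside $[X,Z]$, the key point is that $\C(Y)$ coarsely separates $\C_K(\mathbf{Y})$: by Theorem \ref{thm:isomembed} the nearest-point projection of any $\C(W)$ to $\C(Y)$ is uniformly close to $\pi_Y(W)$, and combined with the fact that $Y$ is a ``coarse cut vertex'' of the quasi-tree $\mathcal{P}_K(\mathbf{Y})$ between $X$ and $Z$, this forces the vertex spaces on the $X$-side of $Y$ and those on the $Z$-side to lie in different components of the complement of a bounded neighbourhood of $\C(Y)$. Hence $p$ must pass through the $L$-neighbourhood of $\C(Y)$, and the same projection estimate shows it must do so entering near $\pi_Y(X)$ and leaving near $\pi_Y(Z)$; thus $p$ contains a subpath inside $\C(Y)$ whose endpoints are coarsely equal to those of $\sigma$. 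Since $\C(Y)$ is a quasi-tree with bottleneck constant $\Delta$, every point of $\sigma$ — in particular $m$ — lies within $\Delta+O(\theta+L)$ of every continuous path of $\C(Y)$ joining points coarsely equal to the endpoints of $\sigma$ (a bounded mismatch of endpoints contributes only a bounded fellow-travelling error, since fellow-travelling in a quasi-tree is controlled), which gives the desired bound on the distance from $p$ to $m$. In the remaining regime, $m$ is within bounded distance of $\C(X)$ (resp. $\C(Z)$), and the same argument applies with $\C(X)$ (resp. $\C(Z)$) in place of $\C(Y)$, using its bottleneck constant and the coarse separation at $\C(X)$.

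The hard part will be the coarse-separation and geodesic-control step: showing that a continuous path from $x$ to $y$ cannot ``shortcut'' across $\C(Y)$ by making excursions into the neighbouring spaces $\C(W)$, and that the portion of $\gamma$ inside $\C(Y)$ is genuinely, up to bounded error, a single geodesic segment with endpoints at the expected gates. This is exactly where the finer combinatorics of the projection complex — the behaviour of the modified distances $d_Y$ under the axioms (P0)--(P2), packaged into the theory of standard paths of \cite{bbf} — must be invoked; the rest of the argument is bookkeeping of constants, and the output $\Delta'$ is assembled from the bottleneck constant of $\mathcal{P}_K(\mathbf{Y})$ and the uniform bottleneck constant $\Delta$ of the fibres.
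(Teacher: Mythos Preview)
The paper does not prove this theorem at all: it is stated as \cite{bbf} Theorem~4.14 and used as a black box, so there is no ``paper's own proof'' to compare against. Your proposal is therefore not competing with anything in the paper.

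That said, your outline is essentially the strategy Bestvina--Bromberg--Fujiwara use in \cite{bbf}: verify Manning's bottleneck criterion by showing that the vertex space $\C(Y)$ containing (or nearest) the midpoint coarsely separates $x$ from $y$, forcing any path to enter $\C(Y)$ near the correct gates, and then invoke the uniform bottleneck constant $\Delta$ of $\C(Y)$. You have correctly identified the substantive step as the coarse-separation and geodesic-control in $\C_K(\mathbf{Y})$, which in \cite{bbf} is handled via their distance formula and the structure of standard paths rather than by an abstract Morse/projection argument. One point to be careful about in your sketch: in the ``endpoint'' regime where $Y=X$ (or $Y=Z$), there is no separation of $x$ from $\C(X)$ to invoke, so the argument there should instead use directly that $p$ begins in $\C(X)$ and must exit near $\pi_X(Z)$, together with the bottleneck property of $\C(X)$ applied to the segment of $\gamma$ in $\C(X)$; your one-line treatment of that case glosses over this asymmetry.
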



\section{Main lemma and first applications} \label{sec:mainlem}

In this section we prove our main lemma, Lemma \ref{lem:mainlem}, and give several relatively straightforward applications. Recall the statement:

\mainlem*

\begin{proof}
Suppose that $G\curvearrowright X\preceq G\curvearrowright Z$ and $G\curvearrowright Y\preceq G\curvearrowright Z$. Then $a$ and $b$ are both loxodromic in the action $G\curvearrowright Z$. Since $a$ and $b$ commute, their fixed points on $\partial Z$ are the same.

Let $f\colon Z\to X$ be a $K$--coarsely Lipschitz, coarsely $G$--equivariant map. Choose a base point $z\in Z$. Then there exists $D>0$ such that $d_X(f(gz),gf(z))\leq D$ for any $g\in G$. The sequences $\{a^nz\}_{n\in \Z}$ and $\{b^n z\}_{n\in \Z}$ are quasigeodesics with the same pair of endpoints on $\partial Z$. Hence they are $E$--Hausdorff close for some $E>0$. 

The set $S=\{b^n f(z)\}_{n\in \Z}$ is bounded since $b$ acts elliptically on $X$. Hence there exists $N$ large enough that $d_X(a^nf(z),S)> KE+K+2D$ for all $n\geq N$. However, given any $n\geq N$, there exists some $m\in \Z$ with $d_Z(b^mz,a^nz)\leq E$. We then have \[d_X(b^mf(z),a^nf(z))\leq d_X(f(b^mz),f(a^nz))+2D\leq Kd_Z(b^mz,a^nz)+K+2D\leq KE+K+2D.\] This is a contradiction.
\end{proof}


\subsection{Right-angled Artin groups}
\label{sec:raags}
We first prove a more general theorem about groups which admit retracts onto subgroups isomorphic to $\Z^2$ and then use this to prove Theorem \ref{mainthm} for the special case of right-angled Artin groups.

\begin{thm}\label{thm:retract}
Let $G$ be a group which admits a retract onto a subgroup isomorphic to $\Z^2$.  Then $\H(G)$ contains no largest element.
\end{thm}

\begin{proof}
Let $r\colon G\to\mathbb Z^2=\langle a,b\rangle$ be a retract.  We may define a projection $p\colon \langle a,b\rangle\to \R$ by $p(a)=1$ and $p(b)=0$ and then define an action of $G$ on $\R$ by translations via \[g(x)=x+p(r(g)) \text{ for }g\in G \text{ and } x \in \R.\] In this action, $a$ is loxodromic while $b$ is elliptic since $p(r(a))=1$ and $p(r(b))=0$. Similarly, we define an action on $\R$ by translations where $a$ acts elliptically and $b$ acts loxodromically. By Lemma \ref{lem:mainlem} this completes the proof.
\end{proof}

Recall that given a finite simplicial graph $\Gamma$, the right-angled Artin group $A(\Gamma)$ is defined by the presentation \[A(\Gamma)=\langle v \in V(\Gamma) : [v,w]=1 \textit{ if $v$ and $w$ are joined by an edge in } \Gamma\rangle\] where $V(\Gamma)$ denotes the set of vertices of $\Gamma$. The group $\Gamma$ is free if and only if $\Gamma$ has no edges.

\begin{cor}
Let $G$ be a right-angled Artin group which is not free. Then $\H(G)$ contains no largest element.
\end{cor}

\begin{proof}
Let $a$ and $b$ be generators of $G$ corresponding to any two vertices of the defining graph $\Gamma$ which are joined by an edge. Then there is a retract $r\colon G\to\mathbb Z^2=\langle a,b\rangle$ defined by fixing $a$ and $b$ and sending all other generators to the identity in $\langle a,b \rangle$. Applying Theorem \ref{thm:retract} completes the proof.
\end{proof}


\subsection{Baumslag-Solitar groups}
\label{sec:bs}
Let $m,n\in\Z \setminus \{0\}$.  The \textit{Baumslag-Solitar group} is defined as  $BS(m,n)=\langle a,b : ba^mb^{-1}=a^n\rangle$.

We will use the following in our proof of Theorem \ref{mainthm} for Baumslag-Solitar groups:

\begin{lem}
\label{lem:distorted}
Let $G$ be a finitely generated group, and let $a\in G$ be distorted (that is, the inclusion of the subgroup $\langle a \rangle$ generated by $a$ in $G$ is not a quasi-isometric embedding). Then in any cobounded action $G\curvearrowright X$ with $X$ hyperbolic, the element $a$ is not loxodromic.
\end{lem}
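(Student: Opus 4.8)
The plan is to exploit the standard fact that a loxodromic element of a cobounded action on a hyperbolic space generates an undistorted subgroup, combined with the Schwarz--Milnor construction used to pass between cobounded actions and generating sets. First I would translate the hypothesis into the language of hyperbolic structures: by \cite[Proposition~3.12]{abo} the cobounded action $G \curvearrowright X$ corresponds to an equivalence class $[S] \in \mathcal H(G)$, where $S$ is a generating set of $G$ with $\Gamma(G,S)$ hyperbolic and equivariantly quasi-isometric to $X$. Since $G$ is finitely generated, I may take $S$ to contain a finite generating set $S_0$ of $G$ (enlarging $S$ within its equivalence class if necessary does not change $X$ up to equivalence); equivalently, I can compare word metrics directly.

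The key step is the following distance estimate. Because $a$ acts loxodromically, the orbit map $n \mapsto a^n x_0$ is a quasi-isometric embedding of $\mathbb Z$ into $X$; equivalently, the function $n \mapsto \|a^n\|_S$ grows linearly, so there is a constant $c > 0$ with $\|a^n\|_S \geq c|n|$ for all $n$. On the other hand, distortion of $\langle a \rangle$ in $G$ with respect to a \emph{finite} generating set $S_0$ means that $\|a^n\|_{S_0}$ grows sublinearly along a subsequence: there exist $n_k \to \infty$ with $\|a^{n_k}\|_{S_0}/|n_k| \to 0$. Since $S$ dominates $S_0$ is \emph{not} what we need here --- rather, $S_0$ being finite means every element of $S_0$ has finite $S$-norm only if $S$ dominates $S_0$, which need not hold. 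The correct observation is the reverse: I should use a word in $S$ of length $\|a^{n_k}\|_S$ representing $a^{n_k}$, project it nowhere --- instead, I use that coboundedness plus the Schwarz--Milnor lemma gives that $X$ is quasi-isometric to $\Gamma(G, S)$ and hence that $\|a^n\|_S \asymp d_X(x_0, a^n x_0)$, which is $\asymp |n|$ by loxodromicity. So it suffices to show $\|a^{n_k}\|_S$ grows sublinearly in $|n_k|$, i.e. to rule out linear lower bounds coming from the (possibly infinite) generating set $S$.

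Here is the clean way to do it. Distortion with respect to $S_0$ gives words $w_k$ in $S_0^{\pm 1}$ of length $o(|n_k|)$ with $w_k =_G a^{n_k}$. I do \emph{not} know each generator in $S_0$ has bounded $S$-length, so I cannot directly bound $\|a^{n_k}\|_S$ this way. Instead I argue geometrically in $X$: fix basepoint $x_0$ and let $R$ be the coboundedness constant. For any $g \in G$, write $g$ as a word $s_1 \cdots s_\ell$ in $S_0$ with $\ell = \|g\|_{S_0}$; then the path $x_0, s_1 x_0, s_1 s_2 x_0, \dots, g x_0$ has consecutive steps of size $d_X(x_0, s_i x_0) \leq M := \max_{s \in S_0} d_X(x_0, s x_0) < \infty$, so $d_X(x_0, g x_0) \leq M \|g\|_{S_0}$. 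Applying this with $g = a^{n_k}$ gives $d_X(x_0, a^{n_k} x_0) \leq M \|a^{n_k}\|_{S_0} = o(|n_k|)$. But loxodromicity of $a$ forces $d_X(x_0, a^{n_k} x_0) \geq \lambda |n_k| - \mu$ for constants $\lambda > 0$, $\mu \geq 0$. For $k$ large this is a contradiction. Hence $a$ cannot act loxodromically.

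The main obstacle, and the only point requiring care, is the interaction between the (possibly infinite) generating set / space $X$ and the finite generating set witnessing distortion: one must not assume the infinite generating set dominates the finite one. The resolution above sidesteps this by passing to the geometry of $X$ directly and using only that the orbit map with respect to the \emph{finite} generating set $S_0$ is Lipschitz (which holds for any isometric action), together with the loxodromic lower bound; coboundedness is in fact not even needed for this lemma, only that $a$ acts loxodromically. I would state the argument in this streamlined form.
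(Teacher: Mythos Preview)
Your final argument (from ``Here is the clean way to do it'' onward) is correct and is essentially the paper's proof: both bound $d_X(x_0,a^nx_0)$ (equivalently $\|a^n\|_T$ after Schwarz--Milnor) by $M\cdot\|a^n\|_{S_0}$ using that the finite generating set $S_0$ has bounded displacement in $X$, and then contradict the linear lower bound from loxodromicity. Your observation that coboundedness is not actually needed is correct and worth keeping; the paper invokes it only to reduce to a Cayley graph, which your direct argument in $X$ avoids---but do excise the false starts in the first two paragraphs before the clean version.
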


\begin{proof}
Using the Schwarz-Milnor Lemma \cite[Lemma~3.11]{abo} we may suppose without loss of generality that $X$ is the Cayley graph of $G$ with respect to a generating set $T$.

Let $S$ be a \textit{finite} generating set for $G$. Then for any $C>0$ there exists $n>0$ such that the word length $\|a^n\|_S<Cn$. Consequently, we may write $a^n=g_1\ldots g_k$ where $g_1,\ldots,g_k\in S$ and $k\leq Cn$. Write $M=\max \{ \|h\|_T : h\in S\}$, which exists because $S$ is finite. Therefore we also have \[\|a^n\|_T\leq \|g_1\|_T +\cdots+\|g_k\|_T< k M\leq CMn.\] Consequently, for any $D>0$ there exists $n>0$ such that $\|a^n\|_T<Dn$. We have that $\|a^n\|_T$ is the distance from $1$ to $a^n\cdot 1$ in the Cayley graph $\Gamma(G,T)$, and therefore these distances do not grow linearly with $n$. This proves that $a$ is not loxodromic, as claimed.
\end{proof}

\begin{lem}
Let $n\in\Z\setminus \{0\}$. Then $BS(1,n)$ is solvable.
\end{lem}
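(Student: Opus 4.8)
The plan is to exhibit $BS(1,n)$ explicitly as an extension with abelian kernel and abelian quotient, which immediately gives solvability of derived length at most $2$. The key observation is that in $BS(1,n)=\langle a,b : bab^{-1}=a^n\rangle$ the stable letter $b$ conjugates $a$ to $a^n$, so if we let $H$ be the normal closure of $a$, then $H$ is the ascending union $\bigcup_{k\geq 0} b^{-k}\langle a\rangle b^{k}$. First I would identify $H$ with the additive group $\mathbb Z[1/n]$: the element $b^{-k}ab^{k}$ should be thought of as $1/n^{k}$, and conjugation by $b$ acts on $\mathbb Z[1/n]$ as multiplication by $n$. One checks this is a well-defined homomorphism from the given presentation onto the subgroup $\langle \mathbb Z[1/n], b\rangle$ of $\mathbb Z[1/n]\rtimes \mathbb Z$ (where $\mathbb Z$ acts by multiplication by $n$), with $a\mapsto 1$, $b\mapsto t$; conversely $\mathbb Z[1/n]\rtimes\mathbb Z$ visibly satisfies the defining relation, so the map is an isomorphism. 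Alternatively, and more cleanly, I would just note that $BS(1,n)$ is by construction the HNN extension / semidirect product $\mathbb Z[1/n]\rtimes_n \mathbb Z$, which is a standard fact.

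With that identification in hand, the rest is immediate: the kernel of the retraction $BS(1,n)\to \mathbb Z$ sending $a\mapsto 0$, $b\mapsto 1$ is exactly $\mathbb Z[1/n]$, which is abelian, and the quotient $\mathbb Z$ is abelian. Hence the derived subgroup of $BS(1,n)$ is contained in $\mathbb Z[1/n]$ and is therefore abelian, so $BS(1,n)$ is (metabelian, in particular) solvable.

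I do not anticipate a genuine obstacle here; the only point requiring a little care is verifying that the proposed map out of the presentation is well defined and injective, i.e.\ that no extra relations collapse $\mathbb Z[1/n]$ — this is handled by writing down the inverse map from $\mathbb Z[1/n]\rtimes_n\mathbb Z$, or by invoking Britton's lemma for the HNN extension $\langle a,b : bab^{-1}=a^n\rangle = \langle a\rangle *_{\langle a\rangle = \langle a^n\rangle}$. Once that is in place, solvability follows from the two-step abelian filtration with no further work.
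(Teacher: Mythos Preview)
Your proposal is correct and follows essentially the same approach as the paper: both identify the normal closure of $a$ with the additive group $\Z[1/n]$ (the paper writes $\Z[1/|n|]$, which is the same thing) on which $b$ acts by multiplication by $n$, giving $BS(1,n)\cong \Z[1/n]\rtimes_n \Z$ and hence metabelian. You are slightly more careful about injectivity (invoking Britton's lemma or the inverse map), whereas the paper simply asserts the isomorphism; otherwise the arguments are the same.
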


\begin{proof}
One checks that the  subgroup normally generated by $a$, denoted $ \llangle a  \rrangle$,  is generated by the conjugates $b^rab^{-r}$ for $r\in \Z$. Furthermore, $\llangle  a \rrangle $ is isomorphic to $\Z\left[\frac{1}{|n|}\right]$ via the the homomorphism $\llangle  a \rrangle \to \Z\left[\frac{1}{|n|}\right]$ defined on the generating set by $b^rab^{-r}\mapsto |n|^r$. We then see that $BS(1,n)$ admits an isomorphism \[BS(1,n)=\llangle  a  \rrangle \rtimes \langle b \rangle \cong \Z\left[\frac{1}{|n|}\right] \rtimes \Z\] where the generator $t$ of $\Z$ acts on $\Z\left[\frac{1}{|n|}\right]$ by $t:x\mapsto nx$. Clearly then $BS(1,n)$ is solvable, as claimed.
%
%
%
\end{proof}

\begin{thm}
\label{thm:bs}
Let $m,n\in \Z\setminus \{0\}$. Then $\H(BS(m,n))$ contains no largest element.
\end{thm}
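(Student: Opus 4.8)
The plan is to split into two cases according to whether $|m| = |n|$ or $|m| \neq |n|$, and in each case exhibit a pair of commuting elements to which Lemma~\ref{lem:mainlem} applies (in one direction) together with a direct argument (in the other direction).

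\textbf{Case $|m| \neq |n|$.} First I would recall the standard fact that in this case the stable letter $a$ is distorted in $BS(m,n)$: the relation $b a^m b^{-1} = a^n$ gives, after iterating, elements $b^{-k} a^{m^k} b^k = a^{n^k}$ (or the reverse), so $a^{n^k}$ has word length $O(k)$ while $|n^k|$ grows exponentially; hence $\langle a\rangle \hookrightarrow BS(m,n)$ is not a quasi-isometric embedding. By Lemma~\ref{lem:distorted}, $a$ acts elliptically in every cobounded hyperbolic action of $G = BS(m,n)$. Now consider the two commuting elements $a$ and $a$ itself is no good; instead note that $a$ and $b a b^{-1}$ (or simply use that $a$ is always elliptic). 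The cleanest route: $G$ surjects onto $\Z = \langle \bar b\rangle$ by killing $a$, giving a lineal action $G \curvearrowright \R$ in which $b$ is loxodromic and $a$ is elliptic. Meanwhile there is \emph{no} cobounded hyperbolic action in which $a$ is loxodromic. Then take any hypothetical largest action $G \curvearrowright Z$: it must dominate the $\R$-action, so $b$ is loxodromic in $Z$; but also every element acting loxodromically somewhere must act loxodromically in $Z$ --- this is not automatic, so instead I would argue directly as follows. If $\H(G)$ had a largest element $G\curvearrowright Z$, then since $G$ is not virtually cyclic (it contains a nonabelian free subgroup when $|m|\geq 2$ and $|n| \geq 2$, and is solvable but not virtually cyclic when one of them is $\pm 1$) $Z$ is unbounded; the largest action dominates the lineal action on $\R$ above, so $b$ is loxodromic on $Z$, and $a$, being distorted, is elliptic on $Z$. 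Since $a$ and $b a b^{-1} = $ (a power expression) --- rather, observe $a$ commutes with nothing useful. The genuinely clean statement: apply Lemma~\ref{lem:mainlem} with the roles $a \leftrightarrow b$, $b \leftrightarrow a$, using the action $X = \R$ (from $G \onto \Z$) where $b$ is loxodromic and $a$ is elliptic, and the action $Y = \Gamma(G, \text{finite gen.\ set})$ --- but we need $a$ loxodromic on $Y$, which fails. So in this case Lemma~\ref{lem:mainlem} is not the tool; instead I would use that $BS(1,n)$ is solvable with abelianization $\Z$... no, abelianization of $BS(m,n)$ with $|m| \neq |n|$ is $\Z \oplus \Z/(m-n)$, of rank $1$. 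Hence I would invoke the independent treatment: $G$ is an ascending HNN extension, $\H(G)$ is governed by confining subsets, and one shows two incomparable quasi-parabolic structures exist (coming from the two ends / the two completions $\Z[1/|n|]$ analogues), so there is no largest element.

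\textbf{Case $|m| = |n|$.} Then either $n = m$ or $n = -m$. Here $\langle a \rangle$ is undistorted and $a^m$ is central in an index-$\leq 2$ subgroup; more to the point, $G$ maps onto $\Z^2$ or onto $\Z \times \Z/2$-like quotients, and crucially there exist commuting elements $a^m$ (normal, generating with $b$ a copy of $\Z^2$ after passing to a finite-index subgroup) so that the hypotheses of Lemma~\ref{lem:mainlem} can be met: build one action (via a homomorphism to $\Z^2 = \langle a^m, b\rangle$ composed with a projection) in which $a^m$ is loxodromic and $b$ is elliptic, and another in which $b$ is loxodromic and $a^m$ is elliptic. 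Since $a^m$ and $b$ commute in the relevant finite-index subgroup --- and one must be careful that they commute in $G$ itself or pass to the subgroup --- Lemma~\ref{lem:mainlem} then shows $\H(G)$ has no largest element. The main obstacle here is the bookkeeping of which elements actually commute and arranging honest cobounded hyperbolic actions realizing the two elliptic/loxodromic patterns; the map-to-$\Z^2$-then-project-to-$\R$ trick (exactly as in the right-angled Artin group proof) handles this once the correct commuting pair is identified.

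\textbf{Anticipated main difficulty.} The hard part is the case $|m| \neq |n|$, and specifically $BS(1,n)$ and its relatives, where $a$ is distorted so Lemma~\ref{lem:mainlem} cannot be applied with $a$ loxodromic anywhere. There the argument must instead go through the structure of $\H(G)$ for ascending HNN extensions: one shows, using the confining-subset machinery of Section~\ref{sec:confsets} (Definition~\ref{def:confining} and the remarks following), that $G = \Z[1/|n|] \rtimes \Z$ admits at least two distinct (incomparable) quasi-parabolic structures --- roughly, the metric on $\Z[1/|n|]$ coming from the $|n|$-adic-type valuation versus its ``reverse'' --- neither of which can be dominated by a single hyperbolic action, so no largest element exists. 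Making this precise, and checking it uniformly for all $|m| \neq |n|$ (not just $m = 1$), is where the real work lies; for $|m|, |n| \geq 2$ one additionally notes $G$ contains a free group and is not hyperbolic relative to proper subgroups in a way that would force a largest action, but the cleanest uniform statement is again via incomparable quasi-parabolic actions obtained from the two ``directions'' of the HNN extension.
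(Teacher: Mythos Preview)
Your case $|m|=|n|$ is essentially the paper's approach: build two lineal actions via homomorphisms (to $\Z$, or to $D_\infty$ when $n=-m$) making one of the pair loxodromic and the other elliptic, then invoke Lemma~\ref{lem:mainlem}. The bookkeeping you worry about is minor.

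The case $|m|\neq|n|$, however, has a genuine gap. Your proposed route via confining subsets is not developed and, more seriously, does not apply uniformly: the machinery of Section~\ref{sec:confsets} is formulated for groups of the form $H\rtimes_\alpha\Z$, and $BS(m,n)$ with $1<m<|n|$ is not such a semidirect product (the subgroup $\langle a\rangle$ is not normal). Even for $BS(1,n)$, asserting ``two incomparable quasi-parabolic structures exist'' without proof is precisely the content of the theorem in that case; you have relocated the difficulty rather than resolved it.

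The paper's argument here does not use confining subsets at all. It exhibits two concrete actions: one on $\hyp^2$, via an explicit representation sending $a$ to a parabolic and $b$ to a loxodromic fixing the parabolic point, and one on the Bass--Serre tree $T$ of the HNN decomposition. For $m=1$ the key observation is that in the $\hyp^2$ action all conjugates of $b$ share a common \emph{attracting} fixed point on the boundary, while in the tree action they share a common \emph{repelling} fixed point but have distinct attracting ones. Any hyperbolic action dominating both would then be of general type, forcing $G$ to contain a free subgroup --- impossible since $BS(1,n)$ is solvable. For $1<m<|n|$ the tree action is already of general type, so any dominating action $G\curvearrowright Z$ is general type; but $a$, being distorted (your Lemma~\ref{lem:distorted} observation) yet non-elliptic (since it is parabolic on $\hyp^2$), must act parabolically on $Z$, and the relation $ba^mb^{-1}=a^n$ forces $b$ to fix the unique fixed point of $a$ on $\partial Z$. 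Hence all of $G$ fixes that boundary point, contradicting general type. This boundary fixed-point analysis is the idea missing from your proposal.
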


\begin{proof}
Note that $BS(m,n)\cong BS(n,m)$ via the map $a\mapsto a, b\mapsto b^{-1}$. Hence we may suppose without loss of generality that $|m|\leq |n|$. Moreover, we have $BS(m,n)\cong BS(-m,-n)$ via the map $a\mapsto a^{-1}, b\mapsto b$. Therefore we may suppose without loss of generality that $m\geq 1$. By these remarks it suffices to consider three cases.  In all that follows, set $G=BS(m,n)$ with $m,n$ depending on the particular case, as described.

\begin{enumerate}[(1)]
\item \underline{$m=|n|$.}

In this case we show that there are cobounded hyperbolic actions $G\curvearrowright X$ with $a$ acting loxodromically and $b$ acting elliptically and $G\curvearrowright Y$ with $a$ acting elliptically and $b$ acting loxodromically. Then we apply Lemma \ref{lem:mainlem}.

The action $G\curvearrowright Y$ is the lineal action corresponding to the homomorphism $G\to \Z$ defined by $a\mapsto 0$, $b\mapsto 1$. We let $\Z$ act on $\R$ by translation and thus define an action of $G$ on $\R$.

If $n=m$, then the action $G\curvearrowright X$ is the lineal action corresponding to the homomorphism $G\to \Z$ defined by $a\mapsto 1, b\mapsto 0$.  We again let $\Z$ act on $\R$ by translation. If $n=-m$, then the action $G\curvearrowright X$ is the lineal action corresponding to the homomorphism $G\to D_\infty=\langle t,r : rtr^{-1}=t^{-1}\rangle$ defined by $a\mapsto t, b\mapsto r$. Here, we let $D_\infty$ act on $\R$ by $t(x)=x+1$ and $r(x)=-x$ for $x\in \R$.

\item \underline{$1=m<|n|$.}

In this case we consider two cobounded hyperbolic actions $G\curvearrowright \hyp^2$ and $G\curvearrowright T$, where $T$ is the Bass-Serre tree of the HNN extension $G\cong \langle a\rangle *_{\langle a \rangle= \langle a^n \rangle}$ (this corresponds to the expression of $G$ as a one edge graph of groups with vertex group $\langle a \rangle$).

We consider the upper half plane model of $\mathbb{H}^2$ with orientation-preserving isometry group $\text{PSL}(2,\R)$ acting by M\"{o}bius transformations. If $n>0$, then $G\curvearrowright \mathbb{H}^2$ is given by \[a\mapsto \begin{pmatrix} 1 & 1 \\ 0 & 1 \end{pmatrix},\quad b\mapsto \begin{pmatrix} \sqrt{n} & 0 \\ 0 & 1/\sqrt{n} \end{pmatrix}.\] If $n<0$, then $G\curvearrowright \mathbb{H}^2$ is given by \[a\mapsto \begin{pmatrix} 1 & 1 \\ 0 & 1 \end{pmatrix}, \quad b\mapsto \psi \circ \begin{pmatrix} \sqrt{|n|} & 0 \\ 0 & 1/\sqrt{|n|} \end{pmatrix},\] where $\psi$ is the orientation-reversing isometry of $\mathbb{H}^2$ consisting of reflection across the positive imaginary axis (i.e., $\psi(z)=-\overline{z}$ for $z\in \hyp^2$).

Note that in the action $G\curvearrowright \mathbb{H}^2$ every conjugate of $b$ has a common attracting fixed point, but that the various conjugates of $b$ have different repelling fixed points. In contrast, in the action $G\curvearrowright T$ every conjugate of $b$ has a common repelling fixed point, but the various conjugates have different attracting fixed points. Hence if there is a hyperbolic action $G\curvearrowright Z$ larger than both $G\curvearrowright \mathbb{H}^2$ and $G\curvearrowright T$, then the action $G\curvearrowright Z$ is general type --- every conjugate of $b$ acts loxodromically, but there are different conjugates of $b$ with disjoint fixed point sets on $\partial Z$. By the Ping-Pong Lemma, $G$ contains a free group. However, this is a contradiction, as $G$ is solvable.

\item \underline{$1<m<|n|$.}

In this case we consider two cobounded hyperbolic actions $G\curvearrowright \mathbb{H}^2$ and $G\curvearrowright T$ where $T$ is the Bass-Serre tree corresponding to the HNN extension $G\cong \langle a\rangle *_{\langle a^m \rangle= \langle a^n \rangle}$.

The action $G\curvearrowright \mathbb{H}^2$ is given by \[a\mapsto \begin{pmatrix} 1 & 1 \\ 0 & 1 \end{pmatrix}, \quad b\mapsto \begin{pmatrix} \sqrt{n/m} & 0 \\ 0 & \sqrt{m/n} \end{pmatrix}\] if $n>0$ and by \[a\mapsto \begin{pmatrix} 1 & 1 \\ 0 & 1 \end{pmatrix}, \quad b\mapsto \psi \circ \begin{pmatrix} \sqrt{|n|/m} & 0 \\ 0 & \sqrt{m/|n|} \end{pmatrix}\] if $n<0$, where $\psi$ is the orientation-reversing isometry $z\mapsto -\overline{z}$ defined earlier.

Note that the action on $T$ is general type. To see this, in the action of $G$ note that $a$ fixes a vertex $v$ along the axis of $b$. At $v$ there are $|n|$ outgoing edges and $m$ incoming edges and $a$ freely permutes the outgoing edges and freely permutes the incoming edges. Consequently $aba^{-1}$ has an axis which passes through $v$ and enters $v$ through a \textit{different} incoming edge than the axis of $b$ and exits $v$ through a \textit{different} outgoing edge than the axis of $b$. Consequently $b$ and $aba^{-1}$ have disjoint fixed point sets on $\partial T$.

We suppose again that there is a hyperbolic action $G\curvearrowright Z$ which dominates both of these. Since $G\curvearrowright T$ is general type, $G\curvearrowright Z$ must be general type. As $a$ acts parabolically in the action $G\curvearrowright \mathbb{H}^2$, it must act parabolically or loxodromically in the action $G\curvearrowright Z$. Since $\langle a\rangle$ is distorted in $G$, $a$ must in fact act parabolically in $G\curvearrowright Z$ by Lemma~\ref{lem:distorted}. Moreover, the equation $ba^mb^{-1}=a^n$ implies that $b$ fixes the single fixed point of $a$ on $\partial Z$. Thus every element of $G$ fixes this point, and we obtain a contradiction to $G\curvearrowright Z$ being general type.\qedhere
\end{enumerate}
\end{proof}

\begin{rem}
When $1=m<n$, the poset of hyperbolic structures of $BS(1,n)$ has been completely described in \cite{ar}, and Theorem \ref{thm:bs} follows in this case (see \cite[Corollary~1.2]{ar}).
\end{rem}

\subsection{Solvable groups}
\label{sec:solvable}

\begin{thm}
Let $G$ be a finitely generated solvable group with abelianization of rank at least two. Then $\H(G)$ contains no largest element.
\end{thm}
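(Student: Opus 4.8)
The plan is to prove the more general statement that if $G$ is amenable and $\dim_{\R}\Hom(G,\R)\geq 2$, then $\H(G)$ has no largest element; the theorem then follows, since finitely generated solvable groups are amenable and a group whose abelianization has rank $>1$ satisfies $\dim_{\R}\Hom(G,\R)\geq 2$. I would use two standard consequences of amenability. First, $G$ admits no cobounded general type action on a hyperbolic space: a general type action produces two loxodromic isometries with disjoint fixed point pairs, hence a nonabelian free subgroup of $G$. Second, $H^2_b(G;\R)=0$, so every homogeneous quasimorphism on $G$ is a homomorphism; in particular every cobounded lineal or quasi-parabolic action of $G$ is regular with a well-defined Busemann homomorphism. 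Fix $\R$-linearly independent homomorphisms $\phi_1,\phi_2\colon G\to\R$ and let $L_1,L_2\in\H(G)$ be the corresponding cobounded lineal structures, where $G$ acts on $\R$ by $g\cdot x=x+\phi_i(g)$.

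Suppose for contradiction that $G\curvearrowright Z$ is a largest hyperbolic action. Since it dominates the nontrivial lineal action $L_1$ it is not elliptic, and by amenability it is not of general type; hence $G\curvearrowright Z$ is lineal or quasi-parabolic. (In the lineal case one could already conclude, since distinct lineal structures are pairwise incomparable; but I will argue uniformly.) Passing if necessary to the index $\leq 2$ normal subgroup of $G$ fixing the endpoints of $Z$ — which only replaces $\phi_1,\phi_2$ by their restrictions, still $\R$-linearly independent, since a homomorphism to $\R$ is detected on any finite-index subgroup — I may assume $G$ fixes a point $\xi\in\partial Z$. Let $\rho\colon G\to\R$ be the associated Busemann homomorphism; then $\rho\neq 0$, as a cobounded lineal or quasi-parabolic action has a loxodromic element, on which $\rho$ is nonzero.

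The key estimate is that for every $g\in G$ and any basepoint $z_0\in Z$,
\[ \lim_{n\to\infty}\frac1n\, d_Z(z_0,g^nz_0)\;=\;|\rho(g)|. \]
Indeed the left-hand side is the stable translation length $\tau_Z(g)$, which vanishes precisely when $g$ acts elliptically or parabolically on $Z$, equivalently when $\rho(g)=0$; and when $g$ is loxodromic it equals $|\rho(g)|$, because the Busemann function $\beta_\xi$ is $1$-Lipschitz and, along the quasi-axis of $g$ (which limits to $\xi$), changes at asymptotically unit rate per period. Now fix $i\in\{1,2\}$ and let $f\colon Z\to\R$ be a $K$-coarsely Lipschitz, coarsely $G$-equivariant map witnessing $L_i\preceq Z$. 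Since $G$ acts on the target by translations, coarse equivariance gives $\tfrac1n\, d_\R(f(z_0),f(g^nz_0))\to|\phi_i(g)|$, while $d_\R(f(z_0),f(g^nz_0))\leq K\, d_Z(z_0,g^nz_0)+K$. Letting $n\to\infty$ and applying the key estimate yields $|\phi_i(g)|\leq K\,|\rho(g)|$ for all $g\in G$.

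In particular $\ker\rho\subseteq\ker\phi_i$, so $\phi_i$ factors as $\phi_i=\bar\phi_i\circ\rho$ for a homomorphism $\bar\phi_i\colon A\to\R$ with $A=\rho(G)\leq\R$ and $|\bar\phi_i(x)|\leq K|x|$ for all $x\in A$. An elementary argument — comparing $\bar\phi_i$ with the linear map $x\mapsto(\bar\phi_i(x_0)/x_0)\,x$ for a fixed $x_0\in A\setminus\{0\}$ and using that an integer combination $m x_1 + n x_0$ can be made arbitrarily small while $|m|\to\infty$ when $x_1/x_0$ is irrational — shows $\bar\phi_i(x)=c_i x$ for a constant $c_i$, necessarily nonzero as $\phi_i\neq 0$. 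Hence $\phi_1=c_1\rho$ and $\phi_2=c_2\rho$ with $c_1,c_2\neq 0$, so $\phi_1=(c_1/c_2)\phi_2$, contradicting the linear independence of $\phi_1$ and $\phi_2$; therefore $\H(G)$ has no largest element. I expect the step requiring the most care to be the key estimate identifying $\lim_n\tfrac1n d_Z(z_0,g^nz_0)$ with $|\rho(g)|$ — i.e.\ showing that the Busemann homomorphism computes translation lengths for loxodromics sharing the global fixed point $\xi$ — together with citing the two amenability inputs in a form matching the definitions of Section~\ref{sec:background}.
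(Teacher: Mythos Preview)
Your proof is correct and takes a genuinely different route from the paper's.

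The paper argues concretely: from the abelianization $\Z^n\times F$ it picks elements $a,b\in G$ mapping to $(1,0,\ldots)$ and $(0,1,\ldots)$, obtains two lineal actions on $\R$ in which $a,b$ swap the roles of loxodromic and elliptic, and then supposes a common upper bound $G\curvearrowright Z$ exists. Both $a$ and $b$ are loxodromic on $Z$; since $G$ is solvable it contains no free subgroup, so Ping-Pong forces $a$ and $b$ to share an endpoint on $\partial Z$. One then reruns the geometry of Lemma~\ref{lem:mainlem}: the forward rays $\{a^nz\}$ and $\{b^nz\}$ are eventually Hausdorff-close, which is incompatible with the equivariant Lipschitz map $Z\to X_1$ collapsing the $b$-orbit while stretching the $a$-orbit.

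Your approach is more structural and more general: you work with any amenable $G$ with $\dim_\R\Hom(G,\R)\geq 2$, use vanishing of $H^2_b(G;\R)$ to make the Busemann quasimorphism of the putative largest action into a genuine homomorphism $\rho$, and then compare stable translation lengths to deduce $|\phi_i(g)|\leq K|\rho(g)|$, forcing each $\phi_i$ to be a scalar multiple of $\rho$. The paper's argument is shorter and entirely self-contained (only Ping-Pong and the displacement estimate of the main lemma), while yours yields the stronger statement for amenable groups and isolates cleanly \emph{why} the obstruction is the one-dimensionality of the Busemann character. The step you flag as delicate---identifying $\tau_Z(g)$ with $|\rho(g)|$ for actions with a global fixed point---is indeed the one needing a reference or a line of justification; your sketch (Busemann functions are $1$-Lipschitz and asymptotically linear along a quasi-axis landing at $\xi$) is the right reason. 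The linearity-of-$\bar\phi_i$ step is also fine, though in the case $A\subset\Q x_0$ you should note separately that any homomorphism to $\R$ vanishing on $x_0$ vanishes on all of $A$, since the Diophantine argument only handles $x_1/x_0$ irrational.
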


\begin{proof}
The abelianization $G/[G,G]$ is isomorphic to $\Z^n \times F$ where $F$ is a finite abelian group and $n\geq 2$.  Let $f\colon G\to \Z^n \times F$ be the abelianization map and $p_1\colon \Z^n\times F\to \Z$ and $p_2\colon \Z^n\times F\to \Z$ be the projections to the first and second factors of $\Z^n$, respectively. 

We may choose $a,b\in G$ with $p_1( f(a))=1$ and $p_1( f(b))=0$ and $p_2( f(a))=0$ and $p_2( f(b))=1$. We obtain actions $G\curvearrowright \R$ by \[g\colon x\mapsto x+p_1( f(g)) \text{ and } g\colon x\mapsto x+p_2( f(g))\] for $g\in G$ and $x\in \R$. We denote these actions by $G\curvearrowright X_1$ and $G\curvearrowright X_2$, respectively.

Suppose there exists $G\curvearrowright Z$ with $G\curvearrowright X_1 \preceq G\curvearrowright Z$ and $G\curvearrowright X_2 \preceq G\curvearrowright Z$. Note that  $a$ is loxodromic and $b$ is elliptic in $G\curvearrowright X_1$ and $a$ is elliptic and $b$ is loxodromic in $G\curvearrowright X_2$. Thus in the action $G\curvearrowright Z$ both $a$ and $b$ must be loxodromic. Since $G$ is solvable, it contains no free subgroup. Moreover, by the Ping-Pong Lemma, any high enough powers of independent loxodromic elements of $G$ in the action $G\curvearrowright Z$ generate a free subgroup of $G$. Hence $a$ and $b$ are not independent in this action.

Up to replacing one of $a$ or $b$ by its inverse, we may suppose that $a$ and $b$ have the same attracting fixed point $p\in \partial Z$. Fix a basepoint $z\in Z$. Then $\{a^n z\}_{n \in \Z_{\geq 0}}$ and $\{b^n z\}_{n\in \Z_{\geq 0}}$ are quasigeodesic rays with the same endpoint $p\in \partial Z$. Hence there exists $E>0$ such that the rays are eventually $E$--close to each other. It follows that there exists $N>0$ such that for all $n\geq N$, there exists $m\in \Z_{\geq 0}$ with $d(a^nz,b^mz)\leq E$. As in the proof of Lemma \ref{lem:mainlem}, this contradicts that $G\curvearrowright Z\succeq G\curvearrowright X_1$.
\end{proof}

\section{Mapping class groups}
\label{sec:mcgs}

Let $S$ be an orientable surface of genus $g$ with $n$ punctures. We define the \textit{complexity} of $S$ to be $\xi(S)=3g-3+n$. The \textit{mapping class group of $S$} is the group $\Mod(S)$ of orientation-preserving homeomorphisms of $S$ up to isotopy.

\subsection{Largest actions}
The main result of this section is the following:

\begin{thm}
\label{thm:mcgaccess}
Suppose that $\xi(S)\geq 2$. Then $\H(\Mod(S))$ contains no largest element.
\end{thm}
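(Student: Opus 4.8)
The plan is to produce, for the group $G=\Mod(S)$ with $\xi(S)\geq 2$, two cobounded hyperbolic actions that cannot be dominated by a common action, and then invoke Lemma \ref{lem:mainlem}. To do this I need two commuting elements $a,b\in\Mod(S)$ together with hyperbolic actions $G\curvearrowright X$ and $G\curvearrowright Y$ so that $a$ is loxodromic and $b$ elliptic on $X$, while $b$ is loxodromic on $Y$. The natural source of such elements is a pair of Dehn twists $T_\alpha,T_\beta$ about disjoint essential simple closed curves $\alpha,\beta$ that together fill a subsurface of complexity at least $2$ but are not themselves filling; such a pair exists exactly because $\xi(S)\geq 2$. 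These twists commute since $\alpha$ and $\beta$ are disjoint.

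The first action is the standard one: $G\curvearrowright \C(S)$, the action on the curve complex, which is cobounded and hyperbolic (Masur--Minsky). Here $a=T_\alpha$ is a reducible element, hence elliptic on $\C(S)$, and likewise $b=T_\beta$ is elliptic. That is not yet what Lemma \ref{lem:mainlem} wants, so instead the main task is to build, for a single Dehn twist $T_\gamma$, a cobounded hyperbolic action on which $T_\gamma$ is loxodromic while a twist $T_\delta$ about a curve disjoint from $\gamma$ (and not co-filling with $\gamma$) acts elliptically. This is where the Bestvina--Bromberg--Fujiwara machinery of Section \ref{sec:bbf} enters, via the work recorded in \cite{scl}: one takes the $\Mod(S)$--orbit of the curve $\gamma$ and, using the subsurface projection / annular projection apparatus, assembles a projection complex and an associated quasi-tree of metric spaces $\C_K(\mathbf Y)$ carrying a $\Mod(S)$--action. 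The point is that $T_\gamma$ is a WWPD${}^+$ element for a suitable action (this is the content of the relevant results of \cite{scl}), so Proposition \ref{prop:qmconstruction} provides a homogeneous quasimorphism $q$ with $q(T_\gamma)\neq 0$ and $q(h)=0$ whenever $h$ acts elliptically in that action — in particular $q(T_\delta)=0$ once $\delta$ is chosen disjoint from, and not co-filling with, $\gamma$, since then $T_\delta$ is reducible and elliptic for the relevant action. Feeding $q$ into Lemma \ref{lem:quasiline} yields a cobounded action of $\Mod(S)$ on a quasi-line on which $T_\gamma$ is loxodromic and $T_\delta$ is elliptic.

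Now I set $a=T_\gamma$, $b=T_\delta$. Running the previous paragraph twice — once with the roles of $\gamma$ and $\delta$ swapped — gives an action $G\curvearrowright X$ with $a$ loxodromic, $b$ elliptic, and an action $G\curvearrowright Y$ with $b$ loxodromic (and $a$ elliptic, though only the loxodromicity of $b$ on $Y$ is needed). Since $a$ and $b$ commute, Lemma \ref{lem:mainlem} immediately implies there is no hyperbolic $G\curvearrowright Z$ with $G\curvearrowright X\preceq G\curvearrowright Z$ and $G\curvearrowright Y\preceq G\curvearrowright Z$. In particular $\H(\Mod(S))$ has no largest element, since a largest element would dominate both $[X]$ and $[Y]$. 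This proves Theorem \ref{thm:mcgaccess}.

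The step I expect to be the main obstacle is verifying that a Dehn twist $T_\gamma$ genuinely gives a WWPD${}^+$ element of an appropriate cobounded hyperbolic action, and hence that Proposition \ref{prop:qmconstruction} applies with the vanishing conclusion $q(h)=0$ for \emph{all} elliptic $h$ — this is exactly the delicate input that requires the projection-complex construction and the analysis of stabilizers of the relevant quasi-tree, carried out in \cite{scl}. Checking coboundedness of the resulting actions and the precise hypothesis $\xi(S)\geq 2$ (needed so that a genuinely non-filling pair of disjoint curves exists, making the other twist reducible and elliptic) is comparatively routine, but must be done with care in the low-complexity cases excluded by the hypothesis.
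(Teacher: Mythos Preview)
Your overall strategy---produce commuting elements $T_\gamma,T_\delta$ and two quasi-line actions separating them, then apply Lemma~\ref{lem:mainlem}---matches the paper's, but there is a real gap in the step where you claim $q(T_\delta)=0$.

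First, the quasi-tree $\C_K(\mathbf{Y})$ does \emph{not} carry a $\Mod(S)$--action; it only carries an action of the finite-index color-preserving subgroup $\Gamma$. So Proposition~\ref{prop:qmconstruction} yields a quasimorphism $q_0$ only on $\Gamma$, and one must average over coset representatives and then extend to get $q:\Mod(S)\to\R$. Second, and more seriously, your justification that $T_\delta$ is elliptic (``$T_\delta$ is reducible and elliptic for the relevant action'') is incorrect: $T_\gamma$ is just as reducible, yet it acts loxodromically on $\C_K(\mathbf{Y})$. What actually makes $T_\delta^k$ elliptic on $\C_K(\mathbf{Y})$ is that $\delta$ does not appear among the domains $\mathbf{Y}$; and what makes $q(T_\delta)=0$ survive the averaging step is that \emph{every conjugate} $h_iT_\delta h_i^{-1}=T_{h_i\delta}$ remains elliptic, i.e.\ that $h_i\delta$ never lands in $\mathbf{Y}$. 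Since $\mathbf{Y}$ consists of curves in the $\Mod(S)$--orbit of $\gamma$, this is guaranteed precisely when $\delta$ lies in a \emph{different} $\Mod(S)$--orbit from $\gamma$. Your hypothesis ``disjoint and not co-filling'' does not ensure this.

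The paper fixes this by explicitly choosing $\alpha,\beta$ in different $\Mod(S)$--orbits (separating vs.\ nonseparating, or bounding twice- vs.\ thrice-punctured disks) and invoking the result of \cite{scl} packaged as Lemma~\ref{lem:qmseparation}. This also forces a separate argument for the five-times punctured sphere, where all essential curves lie in a single orbit; there one replaces $T_\beta$ by a carefully chosen chiral pseudo-Anosov on a three-holed-disk subsurface.
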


\begin{rem}
The condition $\xi(S)< 2$ turns out to be equivalent to $\Mod(S)$ being a \textit{hyperbolic} group. So we actually have the following classification: $\H(\Mod(S))$ contains a largest element if and only if $\xi(S)<2$.
\end{rem}

In the next subsection, we will prove a finer theorem about the structure $\H(\Mod(S))$ when $S$ is a closed surface of genus at least two, which will also imply Theorem \ref{thm:mcgaccess} when $S$ is closed. In this section, we prove Theorem \ref{thm:mcgaccess} using Lemma \ref{lem:mainlem}. 

Our main tool is the following lemma, which is a corollary of \cite[Proposition~4.3]{scl}.  We will first use this lemma to prove Theorem \ref{thm:mcgaccess}, and then we will give an outline of the proof of the lemma in order to preview some of the machinery that will be developed for flip graph manifold groups.  Given an essential simple closed curve $\gamma$, we denote by $T_\gamma$  the Dehn twist about $\gamma$.

\begin{lem}
\label{lem:qmseparation}
Let $\alpha$ and $\beta$ be two essential simple closed curves on $S$ which lie in different $\Mod(S)$--orbits. There exist quasimorphisms $q\colon \Mod(S)\to \R$ and $q'\colon \Mod(S)\to \R$ such that 
\begin{itemize}
\item $q(T_\alpha)\neq 0$ and $q(T_\beta)=0$, and
\item $q'(T_\alpha)=0$ and $q'(T_\beta)\neq 0$.
\end{itemize}
\end{lem}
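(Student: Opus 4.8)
The plan is to deduce Lemma~\ref{lem:qmseparation} from Proposition~\ref{prop:qmconstruction} by producing, for each of the curves $\alpha$ and $\beta$, an action of $\Mod(S)$ on a hyperbolic space in which the corresponding Dehn twist is a $\mathrm{WWPD}^+$ loxodromic element while the other Dehn twist acts elliptically. Granting such actions, Proposition~\ref{prop:qmconstruction} immediately yields homogeneous quasimorphisms $q$ and $q'$ with the stated properties: $q$ comes from the action in which $T_\alpha$ is $\mathrm{WWPD}^+$, so $q(T_\alpha)\neq 0$ by part (1) and $q(T_\beta)=0$ by part (2) since $T_\beta$ acts elliptically there; symmetrically for $q'$.

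The natural candidate for these actions is the one furnished by \cite[Proposition~4.3]{scl} (which the text says this lemma is a corollary of): associated to an isotopy class of essential simple closed curve, or more precisely to the $\Mod(S)$--orbit of the curve $\alpha$, there is an action of $\Mod(S)$ on a quasi-tree of metric spaces built from the Bestvina--Bromberg--Fujiwara machinery of Section~\ref{sec:bbf}, applied to the collection of curves in the orbit of $\alpha$ (with the associated annular curve complexes or subsurface projections playing the role of the spaces $\C(Y)$). In this action $T_\alpha$ acts loxodromically and is $\mathrm{WWPD}^+$ because its axis essentially lives in the factor $\C(Y)$ indexed by $\alpha$ itself, and the stabilizer behaviour of $\alpha$ under $\Mod(S)$ gives the required acylindricity-type/fixed-point rigidity. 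The key point is the orbit hypothesis: since $\beta$ lies in a \emph{different} $\Mod(S)$--orbit than $\alpha$, the curve $\beta$ (and hence $T_\beta$) is ``invisible'' to this construction --- $\beta$ is not one of the index spaces, and $T_\beta$ has bounded orbits in the resulting quasi-tree of metric spaces, i.e.\ acts elliptically. I would spell out why $T_\beta$ is elliptic: it fixes the curve $\beta$, its action permutes the index set $\mathbf Y$ (the orbit of $\alpha$) and it fixes the subsurface projections to each factor coarsely, so its orbit on $\C_K(\mathbf Y)$ is bounded. Then swap the roles of $\alpha$ and $\beta$ to get the second action and hence $q'$.

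Concretely, the steps are: (i) recall from \cite[Proposition~4.3]{scl} the construction of the $\Mod(S)$--action on a hyperbolic space $X_\alpha$ (a quasi-tree of metric spaces over the orbit of $\alpha$) in which $T_\alpha$ is $\mathrm{WWPD}^+$; (ii) verify that $T_\beta$ acts elliptically on $X_\alpha$, using that $\beta\notin \Mod(S)\cdot\alpha$ so that $T_\beta$ fixes every relevant projection coarsely and has bounded orbit; (iii) apply Proposition~\ref{prop:qmconstruction} to $X_\alpha$ to obtain $q$ with $q(T_\alpha)\neq 0$ and $q(T_\beta)=0$; (iv) repeat with $\alpha$ and $\beta$ interchanged to obtain $q'$.

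The main obstacle is step (ii) together with the precise extraction of the WWPD${}^+$ property from \cite[scl]{}: one must be careful that the relevant loxodromic is genuinely $\mathrm{WWPD}^+$ (not merely WWPD), which is exactly the hypothesis needed for Proposition~\ref{prop:qmconstruction} and which requires controlling elements of $\Mod(S)$ that stabilize the pair of fixed points of $T_\alpha$ as a set. The ellipticity of $T_\beta$ is morally clear but needs the orbit-disjointness hypothesis to be used correctly: it guarantees $\beta$ does not appear among the axes/index spaces, so that $T_\beta$'s action is confined to finitely many bounded pieces. Since the paper explicitly says the lemma ``is a corollary of \cite[Proposition~4.3]{scl}'' and promises only to give an \emph{outline} previewing the BBF machinery, I would keep the verification of (i)--(ii) at the level of citing \cite{scl} for the WWPD${}^+$ loxodromic and giving the short argument for ellipticity of the off-orbit twist, rather than rebuilding the quasi-tree from scratch (that is deferred to Section~\ref{sec:flipgraph}).
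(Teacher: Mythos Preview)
Your overall shape is right, but there is a genuine gap in step (i) that cascades into (iii). You assert an action of the whole group $\Mod(S)$ on a quasi-tree of metric spaces indexed by the $\Mod(S)$--orbit of $\alpha$. As the paper's outline explains, this does not exist: the full orbit of $\alpha$ contains pairs of \emph{disjoint} curves, and subsurface projections between disjoint annuli are undefined, so axioms (P0)--(P2) fail. The fix, following \cite{bbf}, is to color subsurfaces so that disjoint ones get different colors and take $\mathbf{Y}$ to be only those curves in $\Mod(S)\cdot\alpha$ with the same color as $\alpha$. But then only the finite-index color-preserving subgroup $\Gamma\lhd\Mod(S)$ acts on $\C_K(\mathbf{Y})$, and it is $T_\alpha^k$ (for $k=[\Mod(S):\Gamma]$), not $T_\alpha$ itself, that is the WWPD${}^+$ loxodromic. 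Proposition~\ref{prop:qmconstruction} therefore only yields a quasimorphism $q_0\colon\Gamma\to\R$, not one on $\Mod(S)$; a separate averaging-and-extension step (summing $q_0$ over coset representatives and then setting $q(g)=\tfrac{1}{k}q_0'(g^k)$, as in \cite[Section~7]{symmetric}) is required to promote $q_0$ to $q\colon\Mod(S)\to\R$, and one must check the nonvanishing/vanishing survive this.

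Your ellipticity argument for $T_\beta$ is also too quick: ``fixes the subsurface projections to each factor coarsely'' is not correct, since $T_\beta$ genuinely moves projections to annuli around curves of $\mathbf{Y}$ that intersect $\beta$. The paper does not argue ellipticity directly; it says that the vanishing $q_0(T_\beta^k)=0$ is established in \cite{scl} from the specific Brooks-type construction of $q_0$, using that $\beta$ lies in a different $\Mod(S)$--orbit. So the orbit hypothesis is used at the level of the quasimorphism, not simply to say $T_\beta$ has bounded orbits.
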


\begin{proof}[Proof of Theorem \ref{thm:mcgaccess} using Lemma \ref{lem:qmseparation}]
Suppose first that $S$ is not the five-times punctured sphere. Since $\xi(S)\geq 2$, there exist simple closed curves $\alpha$ and $\beta$ in $S$ which lie in different $\Mod(S)$ orbits. Namely, if $S$ has genus zero, then we may take $\alpha$ to be a curve bounding a twice-punctured disk and $\beta$ to be a curve bounding a thrice-punctured disk. Otherwise we may choose $\alpha$ to be nonseparating and $\beta$ to be separating. Moreover, we may choose $\alpha$ and $\beta$ to be disjoint, so that $T_\alpha$ and $T_\beta$ commute.

By Lemmas \ref{lem:quasiline} and \ref{lem:qmseparation}   we obtain an action $\Mod(S)\curvearrowright X$ where $X$ is a quasi-line and $T_\alpha$ acts loxodromically and $T_\beta$ acts elliptically. Similarly we obtain an action $\Mod(S)\curvearrowright Y$ where $Y$ is a quasi-line and $T_\alpha$ acts elliptically while $T_\beta$ acts loxodromically. Applying Lemma \ref{lem:mainlem} completes the proof.

Now we suppose that $S$ is the five-times punctured sphere. In this case, there is only a single $\Mod(S)$--orbit of essential simple closed curves. Choose $\alpha$ to be an essential simple closed curve; it bounds a three-times punctured disk $V$. We will choose $\phi$ to be a pseudo-Anosov on $V$ so that $T_\alpha$ and $\phi$ commute. We will then find homogeneous quasimorphisms $q$ and $q'$ such that $q(T_\alpha)=0$, $q(\phi)\neq 0$ and $q'(T_\alpha)\neq 0$, $q'(\phi)= 0$. However we must be careful to choose $\phi$ to be \textit{chiral} (see \cite{scl}). Recall that $\phi$ is chiral if $\phi^n$ is not conjugate to $\phi^{-n}$ in $\Mod(S)$ for any $n\neq 0$.

To choose $\phi$ we argue as follows. The mapping class group of the three-times punctured disk $V$ is the braid group $B_3$ on three strands. There is a surjective homomorphism $F\colon\Mod(V)\to \SL(2,\Z)$ with kernel generated by the Dehn twist along $\partial V=\alpha$, defined as follows. The group $B_3$ is generated by two half twists $\sigma$ and $\tau$, which satisfy the braid relation $\sigma\tau\sigma=\tau\sigma\tau$. We define \[F(\sigma)=\begin{pmatrix} 1 & 1 \\ 0 & 1 \end{pmatrix}, F(\tau)= \begin{pmatrix}1 & 0 \\ -1 & 1 \end{pmatrix}.\] We see that if $F(\psi)$ is an \textit{Anosov matrix} (i.e. a matrix with two distinct real eigenvalues) then $\psi$ is pseudo-Anosov. This holds since any reducible element $\eta$ of $\Mod(V)$ is conjugate to the product of a power of a Dehn twist on $\partial V$ and a power of some half twist, and therefore $F(\eta)$ is unipotent.

If $\phi\in \Mod(V)$ has the property that $F(\phi)$ is Anosov and $F(\phi)^n$ is not conjugate to $F(\phi)^{-n}$ for any $n\neq 0$, then $\phi$ is pseudo-Anosov (possibly twisting along $\partial V=\alpha$) and $\phi^n$ is not conjugate to $\phi^{-n}$ in $\Mod(V)$ for any $n\neq 0$. Moreover, we see that if $g\in \Mod(S)$ conjugates a non-trivial power $\phi^n$ to $\phi^{-n}$ then $g$ must fix $V$ and therefore restrict to an element of $\Mod(V)$ which conjugates $\phi^n$ to $\phi^{-n}$. This is a contradiction.

Matrices $A\in \SL(2,\Z)$ with the property that $A^n$ is not conjugate to $A^{-n}$ for any $n\neq 0$ do exist. See \cite{reversing} (in particular Example 2 and Lemma 9). Therefore, we may choose $\phi \in \Mod(V)$ such that $F(\phi)$ is Anosov and $F(\phi)^n$ is not conjugate to $F(\phi)^{-n}$ for any $n\neq 0$. It follows that $\phi$ is pseudo-Anosov and chiral. By \cite[Proposition~4.3]{scl}, there exist homogeneous quasimorphisms $q$ and $q'$ with $q(T_\alpha)=0$, $q(\phi)\neq 0$ and $q'(T_\alpha)\neq 0$, $q'(\phi)= 0$. Applying Lemmas \ref{lem:quasiline} and \ref{lem:mainlem} completes the proof.
\end{proof}

In order to preview some of the machinery that will be developed for flip graph manifold groups, we outline the proof of Lemma \ref{lem:qmseparation}. Details may be found in \cite{scl}.

\begin{proof}[Outline of proof of Lemma \ref{lem:qmseparation}]
In \cite{bbf}, Bestvina--Bromberg--Fujiwara construct an action of a finite-index subgroup $\Gamma< G$ on a quasi-tree $\mc C(\bf X)$ in which $T_\alpha$ is loxodromic (\cite[Theorem~5.9]{bbf}).  We briefly review the construction here.

The curve graph $\C(\gamma)$ of a simple closed curve $\gamma$, as defined in \cite[Section~5]{bbf}, is quasi-isometric to $\R$, and $T_\gamma$ acts on it as a loxodromic isometry. We would like to define a quasi-tree of metric spaces $\C(\mathbf{Y})$ where $\mathbf{Y}$ is the collection of all curves in the $\Mod(S)$--orbit of $\gamma$ and if $\delta\in \mathbf{Y}$ then $\C(\delta)$ is the curve graph of $\delta$. However, this will not work, because such a collection $\mathbf{Y}$ contains disjoint elements, making it impossible to define subsurface projections between the elements of $\mathbf{Y}$.

Instead, we choose  $\mathbf{Y}$ to be a subset of the curves in the $\Mod(S)$--orbit of $\gamma$. To do this, Bestvina--Bromberg--Fujiwara construct in \cite[Lemma 5.6]{bbf} a specific coloring of the (isotopy classes of) subsurfaces of $S$ with finitely many colors. This coloring has the property that disjoint subsurfaces have distinct colors. By the proof of \cite[Lemma~5.7]{bbf}, $\Mod(S)$ permutes the set of colors of subsurfaces and thus there is a finite index normal subgroup $\Gamma\leq \Mod(S)$ that preserves the colors. Now we may choose $\mathbf{Y}$ to be the set of curves in $\Mod(S) \cdot\gamma$ with \textit{the same color} as $\gamma$.

By machinery for mapping class groups developed in \cite{hier} and \cite{asymp},  the axioms (P0)--(P2) are satisfied for $\mathbf{Y}$ and subsurface projections $\pi_Y$ between the elements of $\mathbf{Y}$. Hence we obtain a quasi-tree of metric spaces $\C_K(\mathbf{Y})$ whenever $K$ is large enough. Although $\Mod(S)$ does not act on $\C_K(\mathbf{Y})$, the color-preserving subgroup $\Gamma$ \textit{does} act on $\C_K(\mathbf{Y})$.

Set $\gamma=\alpha$, where $\alpha$ is as in the statement of Lemma \ref{lem:qmseparation}, and set $\mathbf{Y}$ to be the set of elements of $\Mod(S)\cdot \gamma$ with the same color as $\gamma$, as above. We will use the action of $\Gamma$ on $\C_K(\mathbf{Y})$ to define a quasimorphism $\Mod(S)\to \R$.  Let $k$ be the index of $\Gamma$ in $G$.
We have that
\begin{itemize}
\item $T_\alpha^k\in \Gamma$,
\item $T_\alpha^k$ acts loxodromically on $\C_K(\mathbf{Y})$, and
\item $T_\alpha^k$ is WWPD in the action $\C_K(\mathbf{Y})$.
\end{itemize}
The last point follows easily from the quasi-tree of metric spaces machinery. In fact it is straightforward to check that $T_\alpha^k$ is WWPD${}^+$. As in Proposition \ref{prop:qmconstruction}, we define a homogeneous quasimorphism $q_0\colon\Gamma\to \R$ such that $q_0(T_\alpha^k)\neq 0$. Bestvina--Bromberg--Fujiwara use the construction of $q_0$ to show that $q_0(T_\beta^k)=0$.

Choose $h_1,\ldots,h_k$ to be coset representatives of $\Gamma$ in $\Mod(S)$. We first modify $q_0$  by defining \[q_0'(g)=\sum_{i=1}^kq_0(h_i^{-1}gh_i)\] for $g\in \Gamma$. This modified $q_0'$ satisfies $q_0'(hgh^{-1})=q_0'(g)$ for any $h\in \Mod(S)$. Furthermore, it extends to a homogeneous quasimorphism $q\colon\Mod(S)\to \R$ by setting $q(g)=\frac{1}{k} q_0'(g^k)$ (see \cite[Section~7]{symmetric}). Bestvina--Bromberg--Fujiwara also show that $q(T_\alpha)\neq 0$ and $q(T_\beta)=0$, as desired.

The existence of $q'$ follows immediately from the existence of $q$, since we only required $\alpha$ and $\beta$ to lie in distinct mapping class group orbits.
\end{proof}

\subsection{Maximal lineal actions}
In this section, we prove an extension of Theorem \ref{thm:mcgaccess} in the case that $S$ has no punctures.  Let $\Mod(S)$ be the mapping class group of a closed surface $S$, and consider a proper, connected, essential  subsurface $V$ of $S$. Suppose moreover that $V$ is disjoint from some element of its mapping class group orbit; that is, there is $h\in \Mod(S)$ such that $hV$ and $V$ have disjoint representatives in their isotopy classes. We show that under a certain technical condition on $V$, if $[\Mod(S)\curvearrowright X]$ is a hyperbolic structure  and  there exists $\phi\in \Mod(S)$ supported on $V$ that acts loxodromically on $X$, then the structure must in fact be lineal.

Before stating this result precisely, we introduce some notation.  Given two isotopy classes of essential subsurfaces $A,B$ of $S$, we write $A\perp B$ if $A$ and $B$ have disjoint representatives and $A\nest B$ if $A$ has a representative contained in $B$.   Let $g(T)$ denote the genus of a finite type surface $T$, let $b(T)$ denote the number of boundary components (or punctures), and let $\xi(T)=3g(T)-3+b(T)$ denote the complexity. We note that complexity is monotonic under inclusion; that is, if $A\nest B$, then $\xi(A)\leq \xi(B)$.

We may write $S\setminus V=W_1 \sqcup W_2 \sqcup \ldots \sqcup W_n$ where $W_i$ is a closed subsurface with $b(W_i)$ boundary components, isotopic to $b(W_i)$ of the boundary components of $V$. Since there exists $h\in \Mod(S)$ with $hV\perp V$, we have $hV \nest W_i$ for some $i$. Without loss of generality we may assume $hV\nest W_1$. Note that $S\setminus W_1$ is a connected surface $U_1$. We emphasize in the following two results that $S$ is a closed surface.

\begin{thm}
In the notation outlined above, suppose that $g(W_1)>g(U_1)$, and let $[\Mod(S)\curvearrowright X]\in\mathcal H(\Mod(S))$.  If there exists $\phi\in \Mod(S)$ supported on $V$ such that $\phi$ acts loxodromically on $X$, then $[\Mod(S)\curvearrowright X]$ is lineal and maximal.

\label{lineal}
\end{thm}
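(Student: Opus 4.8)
The plan is to establish lineality by showing that $\Mod(S)$ preserves, as a set, the pair $\{\phi^+,\phi^-\}\subset\partial X$ of fixed points of the loxodromic element $\phi$, and then to deduce maximality from this. Granting the set-invariance, the action $\Mod(S)\curvearrowright X$ is not elliptic (it has a loxodromic) and not parabolic; it is not quasi-parabolic, since a group that preserves $\{\phi^+,\phi^-\}$ and contains a loxodromic with exactly these two fixed points must fix both of them rather than a unique one; and it is not of general type, since then the subgroup of index at most $2$ fixing each of $\phi^+,\phi^-$, which contains a power of $\phi$, would fix a point of $\partial X$. So it remains only to prove the invariance, and this is where the hypotheses on $V$ enter.

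First I would use the hypothesis $hV\perp V$. The element $h\phi h^{-1}$ is supported on $hV$, hence commutes with $\phi$, and it acts loxodromically on $X$ (being conjugate to $\phi$) with fixed point set $h\cdot\{\phi^+,\phi^-\}$. Since commuting loxodromic isometries of a hyperbolic space share the same pair of fixed points, $h\cdot\{\phi^+,\phi^-\}=\{\phi^+,\phi^-\}$; that is, $h$ stabilizes the pair. More generally, any $g\in\Mod(S)$ commuting with $\phi$ carries $\Fix(\phi)$ to $\Fix(g\phi g^{-1})=\Fix(\phi)$, so $\mathrm{Stab}(\{\phi^+,\phi^-\})$ contains the subgroup $\Mod(W_1)$ of mapping classes supported on $W_1$ (which are supported off $V$). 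Symmetrically, $\Mod(U_1)$ commutes with $h\phi h^{-1}$, whose support $hV\nest W_1$ is disjoint from $U_1$, so $\Mod(U_1)$ also stabilizes $\{\phi^+,\phi^-\}$. Hence $\langle\Mod(W_1),\Mod(U_1),h\rangle\leq\mathrm{Stab}(\{\phi^+,\phi^-\})$.

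The heart of the proof --- and the step I expect to be the main obstacle --- is to show that $\langle\Mod(W_1),\Mod(U_1),h\rangle$ is all of $\Mod(S)$, or at least of finite index, and this is where the hypothesis $g(W_1)>g(U_1)$ is used. I would reduce to Dehn twists: $\Mod(S)$ is generated by twists about nonseparating curves, and $\langle\Mod(W_1),\Mod(U_1)\rangle$ already contains every such twist about a curve lying entirely in $W_1$ or entirely in $U_1$ (a nonseparating curve of $W_1$ or of $U_1$ is nonseparating in $S$). Given a nonseparating $\gamma$ essentially crossing the multicurve $c=\partial W_1$, the goal is to find $g\in\langle\Mod(W_1),\Mod(U_1),h\rangle$ with $g\gamma$ disjoint from $c$, so that $T_\gamma=g^{-1}T_{g\gamma}g$ lies in the subgroup; here $h$ supplies the moves that cross $c$, and the genus surplus $g(W_1)>g(U_1)$ should be exactly what guarantees enough room inside $W_1$ to place $g\gamma$ there (when $g(W_1)=g(U_1)$ a symmetry interchanging the two sides of $c$ can obstruct this). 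Carrying out this change-of-coordinates argument is the technical crux. Should it only yield finite index, the conclusion still follows: then $\Mod(S)$ has a finite orbit of pairs in $\partial X$, so a finite-index subgroup $T$ fixes each point occurring among those pairs; if more than two such points occurred, $T$ would fix at least three points of $\partial X$ and so be elliptic, contradicting that $T$ contains a power of the loxodromic $\phi$. Hence the orbit is the single pair $\{\phi^+,\phi^-\}$, which gives the invariance and thus lineality.

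For maximality, suppose $[\Mod(S)\curvearrowright X]\preccurlyeq[\Mod(S)\curvearrowright Y]$ in $\mathcal{H}(\Mod(S))$. A coarsely $\Mod(S)$-equivariant, coarsely Lipschitz map $Y\to X$ shows that $\phi$ is loxodromic on $Y$ as well (pull back the quasigeodesic orbit of $\phi$ in $X$), so by the lineality just established, applied now to $Y$, the structure $[\Mod(S)\curvearrowright Y]$ is lineal. Then $X$ and $Y$ are cobounded quasi-lines in which the $\langle\phi\rangle$-orbit is coarsely dense, and the equivariant Lipschitz map restricts on that orbit to a quasi-isometric embedding with coarsely dense image; it is therefore a quasi-isometry, so $[\Mod(S)\curvearrowright X]=[\Mod(S)\curvearrowright Y]$ and $[\Mod(S)\curvearrowright X]$ is a maximal element of $\mathcal{H}(\Mod(S))$.
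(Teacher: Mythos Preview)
Your overall architecture is right and agrees with the paper: show that $\Mod(S)$ preserves the pair $\{\phi^+,\phi^-\}$, deduce that the cobounded action is lineal, and then argue maximality by applying the lineality conclusion to any dominating structure (the paper phrases this last step via the fact, from \cite[Corollary~4.12]{abo}, that lineal structures are minimal; your quasi-isometry argument gives the same conclusion). One small point of care you glossed over: elements ``supported on $W_1$'' in the paper's sense may permute components of $\partial W_1$, so they need not commute with $\phi$ on the nose; the paper handles this by passing to a power of $\phi$ that fixes $\partial V$ pointwise, and you should do likewise before invoking commutation.

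Where your proposal diverges from the paper is exactly at the step you flag as the crux. You try to show that $\langle \Mod(W_1),\Mod(U_1),h\rangle$ equals (or has finite index in) $\Mod(S)$, for a \emph{single} element $h$ with $hV\perp V$, via a change-of-coordinates argument on Dehn twists. You do not carry this out, and it is not clear it goes through with one fixed $h$: the conjugators you need to push a nonseparating curve off $\partial W_1$ must themselves lie in the subgroup you are trying to build, which makes the induction delicate. The paper sidesteps this entirely. It introduces the graph $\Gamma(V)$ whose vertices are the $\Mod(S)$--translates of $V$ and whose edges record disjointness, and proves $\Gamma(V)$ is \emph{connected}. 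Connectedness is established by checking, for each Humphries generator $g$, that $V$ and $gV$ are joined by a path of length two in $\Gamma(V)$; the hypothesis $g(W_1)>g(U_1)$ is exactly what allows one to find a translate $fU_1$ disjoint from both $U_1$ and $g_iU_1$ when $g_i$ moves $U_1$. Once $\Gamma(V)$ is connected, a path $V=h_0V,h_1V,\dots,h_rV=hV$ gives a chain of commuting loxodromic conjugates $h_i\phi h_i^{-1}$, so the fixed pair propagates along the path and $h\cdot\{\phi^\pm\}=\{\phi^\pm\}$ for every $h\in\Mod(S)$.

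In short: the paper's organizing device---connectedness of $\Gamma(V)$---replaces your generation problem with a simpler one (finding length-two paths for each Humphries generator), and that is where the genus hypothesis is actually used. Your sketch is not wrong, but the missing step is real, and the paper's route is both cleaner and complete.
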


\begin{proof}
Consider the graph $\Gamma=\Gamma(V)$ with vertex set equal to the orbit $\Mod(S)\cdot V$ and edges joining pairs $hV$ and $kV$ whenever $hV\perp kV$. 

We first show that the graph $\Gamma$ is connected.  It suffices to show that for all elements $g\in \mathcal{G}$, for a fixed finite generating set $\mathcal{G}$ of $\Mod(S)$, there is a path from $V$ to $gV$ in $\Gamma$. We consider the Humphries generators $\mathcal{G}$, which are Dehn twists along the blue curves in Figure \ref{humphries}. Moreover, we suppose that $g(U_1)\geq 2$; the cases $g(U_1)\leq 1$ are handled in a nearly identical manner.

\begin{figure}[h]
\begin{tabular}{c c}
\def\svgwidth{225pt}
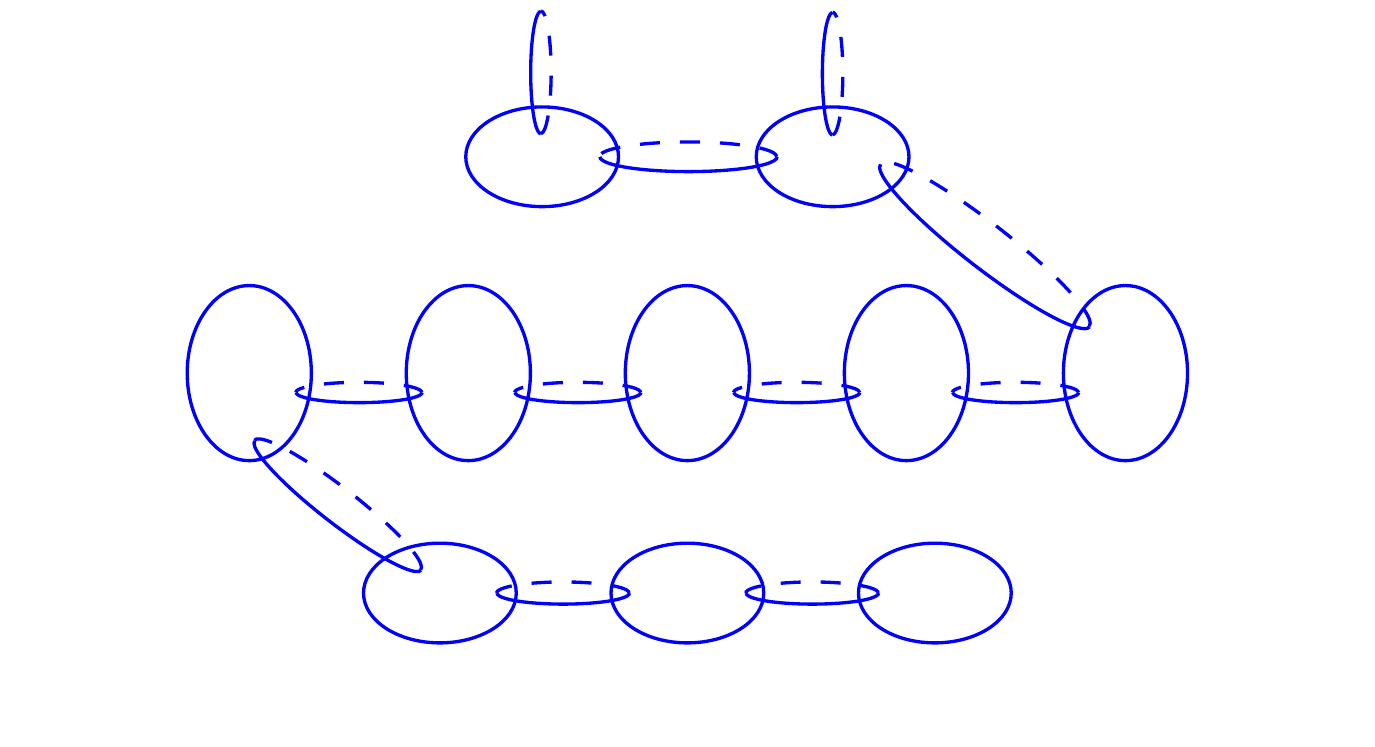 & 
\def\svgwidth{225pt}
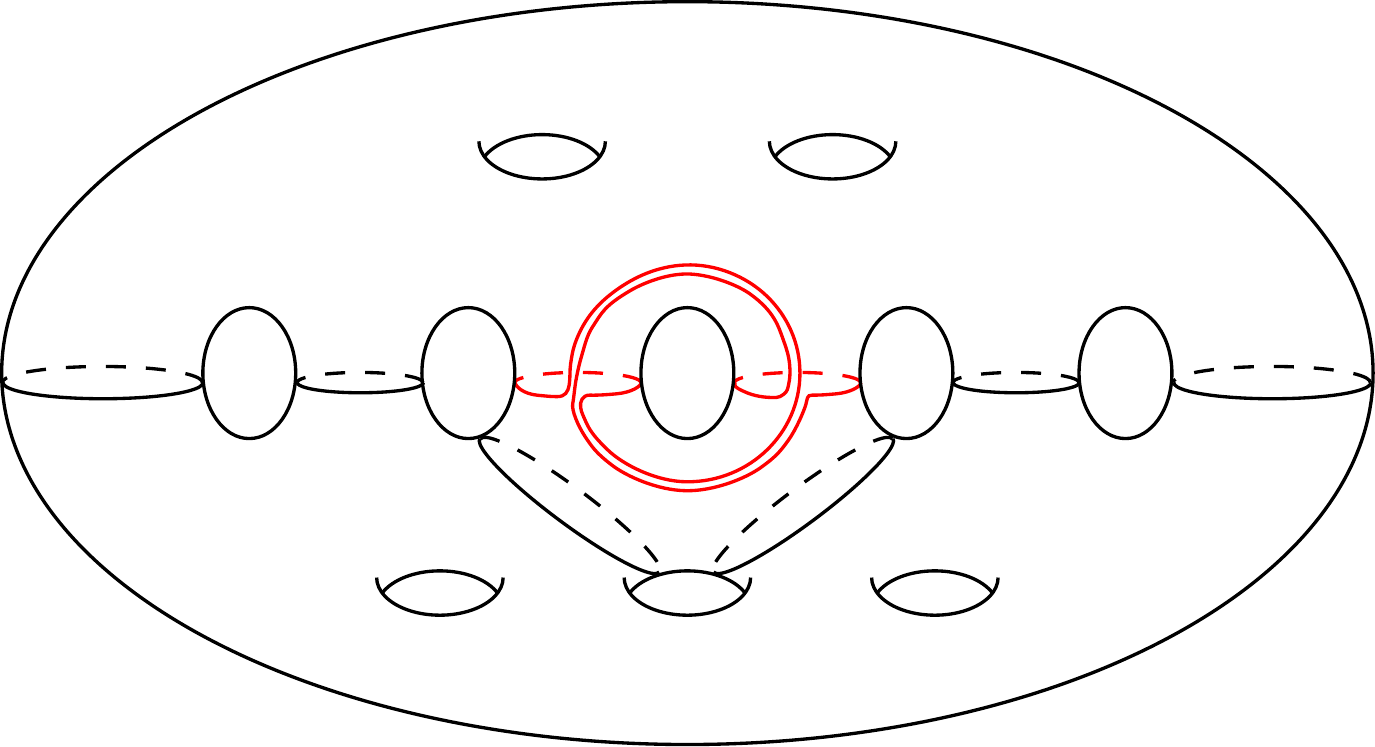 \\
\end{tabular}
\caption{Dehn twists on the blue curves on the left form the \textit{Humphries generating set} $\mathcal{G}$ of $\Mod(S)$. The figure on the right shows how $U_1$ transforms after applying the generator $g_i$. The surface $fU_1$ is bounded by the black curves and avoids the red curves, which are components of $\partial g_i U_1$.}
\label{humphries}
\end{figure}

We single out the Dehn twists $g_i$ around the middle curves as shown in Figure \ref{humphries}. For $g\in \mathcal{G} \setminus \{g_1,\ldots,g_n\}$ we have $gU_1=U_1$. Since $V\nest U_1$ and $gV\nest gU_1=U_1$ whereas $hV\nest W_1$, we have $V\perp hV$ and $hV\perp gV$. Thus, $V,hV,gV$ constitutes a path from $V$ to $gV$ in $\Gamma$. On the other hand, for a generator $g_i$ we have $g_iU_1\neq U_1$. In this case  there is an element $fU_1$ of the orbit of $U_1$ with $fU_1\perp U_1$ and $f U_1 \perp g_iU_1$. Hence we have $V\perp fV$ and $fV\perp g_iV$, and thus $V,fV,g_iV$ is a path in $\Gamma$.  Therefore, $\Gamma$ is connected.

We now show how this implies the theorem.  Let $\phi$ act loxodromically on $X$ with fixed points $\phi^{\pm}\in \partial X$. It will be convenient to take a power of $\phi$ to assume without loss of generality that $\phi$ fixes $\partial V$ pointwise (up to isotopy). For $h\in \Mod(S)$, the conjugate $h\phi h^{-1}$ is loxodromic with fixed points $h\phi^{\pm}$. Consider a path $V=h_0V, h_1V,\ldots, h_rV=hV$ in $\Gamma$. For $i$ between $0$ and $r$ we have:
\begin{itemize}
\item $h_iV\perp h_{i+1}V$, 
\item $h_i\phi h_i^{-1}$ is supported on $h_iV$ and fixes $\partial h_iV$ pointwise, and 
\item $h_{i+1}\phi h_{i+1}^{-1}$ is supported on $h_{i+1}V$ and fixes $\partial h_{i+1}V$ pointwise.
\end{itemize}
 Hence $h_i\phi h_i^{-1}$ and $h_{i+1}\phi h_{i+1}^{-1}$ commute and therefore fix the same pair of points on $\partial X$. That is, $h_i\phi^{\pm}=h_{i+1}\phi^{\pm}$. From this string of equalities, we find that $h\phi^{\pm}=\phi^{\pm}$. Thus all of $\Mod(S)$ fixes $\phi^{\pm}\in \partial X$, and since $\Mod(S)\curvearrowright X$ is cobounded, we must have that $X$ is a quasi-line. 
 
 To see that $[\Mod(S)\curvearrowright X]$ is maximal, suppose that there exists $[\Mod(S)\curvearrowright Y]\in \mc H(G)$ such that $[\Mod(S)\curvearrowright X]\preceq [\Mod(S)\curvearrowright Y]$. Then $\phi$ acts loxodromically on $Y$, and so the same argument shows that $Y$ is a quasi-line. Since all lineal structures are minimal (\cite[Corollary~4.12]{abo}), we must in fact have $[\Mod(S)\curvearrowright X]=[\Mod(S)\curvearrowright Y]$.

\end{proof}

The following lemma shows that the assumption that $g(W_1)>g(U_1)$ in Theorem \ref{lineal} is not too restrictive.  

\begin{lem}
In the notation outlined above, we have $g(W_1)\geq g(U_1)$.
\end{lem}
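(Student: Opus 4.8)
The plan is to realize $U_1$ as an essential subsurface of $W_1$; since genus is monotone under essential subsurface inclusion — for instance because cutting a connected surface along a multicurve never decreases the total genus of the resulting pieces — the inequality $g(U_1)\le g(W_1)$ then follows at once. Writing $hW_1,\dots,hW_n$ for the components of $S\setminus hV=h(S\setminus V)$, the whole argument reduces to identifying which of the $hW_i$ contains $U_1$: I claim it is $hW_1$.

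First I would assemble some elementary observations. Applying $h^{-1}$ to $hV\perp V$ gives $V\perp h^{-1}V$, so $h^{-1}V$ lies in a unique component of $S\setminus V$, say $W_j$; equivalently $V\nest hW_j$. Since $hV\nest W_1$, the connected surface $U_1=S\setminus W_1$ is disjoint from $hV$, hence contained in a single component of $S\setminus hV$; as $V\nest U_1$ and $V\nest hW_j$, that component must be $hW_j$, i.e.\ $U_1\nest hW_j$. It therefore suffices to prove $j=1$.

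This last step is the main obstacle, and I would handle it by contradiction. Suppose $j\ne 1$. Then $U_1\nest hW_j$ gives $S\setminus hW_j\nest S\setminus U_1=W_1$; and since $hW_j$ is a complementary component of $hV$, we have $S\setminus hW_j=hV\cup\bigcup_{l\ne j}hW_l\supseteq hV\cup hW_1$. Because $hW_1$ is a complementary component of $hV$, it meets $hV$ along all $b(W_1)=b(hW_1)$ of its boundary curves, so $hV\cup hW_1$ is a connected essential subsurface of $S$, and the cut-and-glue formula for complexity gives $\xi(hV\cup hW_1)=\xi(hV)+\xi(hW_1)+b(W_1)=\xi(V)+\xi(W_1)+b(W_1)$. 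But $hV\cup hW_1\nest W_1$, so monotonicity of complexity forces $\xi(V)+\xi(W_1)+b(W_1)\le\xi(W_1)$, i.e.\ $\xi(V)+b(W_1)\le 0$. This is absurd: $b(W_1)\ge 1$ since $W_1$ is a complementary component of the nonempty multicurve $\partial V$ in the closed surface $S$, and $\xi(V)\ge 1$ (in fact $\xi(V)\ge 0$ already suffices, as $V$ is not an annulus). Hence $j=1$, so $U_1\nest hW_1$, and $g(U_1)\le g(hW_1)=g(W_1)$. Aside from this identification of components, everything is routine surface topology; I do not foresee other difficulties.
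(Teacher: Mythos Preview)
Your reduction to showing $U_1\nest hW_1$ is exactly the first move the paper makes, and your complexity inequality $\xi(V)+b(W_1)\le 0$ (obtained by embedding $hV\cup hW_1$ in $W_1$) is a clean dual version of the paper's comparison of $\xi(U_1)$ with $\xi(hW_2)$. In the case $\xi(V)\ge 0$ your argument is correct and arguably tidier than the paper's.

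The gap is the clause ``as $V$ is not an annulus.'' Nothing in the hypotheses rules out $V$ being an annulus; indeed the application in Theorem~\ref{thm:maxlineal} takes $V$ to be an annular neighborhood of a nonseparating curve. When $V$ is an annulus you have $\xi(V)=-1$, and your inequality $\xi(V)+b(W_1)\le 0$ reduces to $b(W_1)\le 1$, which is satisfied (with equality) and yields no contradiction. Concretely, your claim $j=1$ can genuinely fail: take $S$ of genus~$6$, $V$ a separating annulus with $g(W_1)=4$, $g(W_2)=2$; one can choose $h$ so that $hV\nest W_1$ while $h^{-1}V\nest W_2$, i.e.\ $j=2$. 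The paper's proof reaches the same impasse and then observes that the residual case forces $n=2$ and $V$ an annulus with $b(W_1)=b(W_2)=1$, after which one argues directly: the core of $hV$ is a separating curve in $W_1$ cutting off a one-holed subsurface of genus $\min\{g(W_1),g(W_2)\}$, and since this subsurface sits inside $W_1$ one gets $\min\{g(W_1),g(W_2)\}\le g(W_1)$, hence $g(W_2)\le g(W_1)$. You should add this endgame (or otherwise dispose of the annulus case) to complete the proof.
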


\begin{proof}
Since $hV\nest W_1$, some connected component of $S\setminus hV$ must contain $U_1$, and this component must be one of the subsurfaces $hW_1,\ldots,hW_n$. If $U_1\nest hW_1$, then  $g(U_1)\leq g(hW_1)=g(W_1)$, and the proof is complete. Otherwise  $U_1\nest hW_i$ for some $i>1$. Without loss of generality we suppose that $U_1\nest hW_2$. We will show in this case that in fact $n=2$ and $V$ is an annulus.

We consider the complexity $\xi$ of the subsurfaces involved. We note that $b(U_1)=b(W_1)$. The genus of $U_1$ is equal to $g(V)+\sum_{i\geq 2} g(W_i)+\sum_{i\geq 2} (b(W_i)-1)$ (here the last term comes from considering the contribution of the boundary components of the subsurfaces $W_i$ to the genus of $U_1$). Thus, we have \[\xi(U_1)=3g(V)+3\sum_{i=2}^n g(W_i) + 3\sum_{i=2}^n \left(b(W_i)-1\right)-3 +b(W_1).\] Once again, it is important to note that $S$ is a closed surface in order for the above calculation to make sense. On the other hand, $\xi(hW_2)=3g(W_2)-3+b(W_2)$. Since $U_1\nest hW_2$, we have \[3g(V)+3\sum_{i=2}^n g(W_i) + 3\sum_{i=2}^n \left(b(W_i)-1\right) -3+b(W_1)=\xi(U_1)\leq \xi(hW_2)=3g(W_2)-3+b(W_2).\] Subtracting $3g(W_2)-3$ from both sides yields \[3g(V)+3\sum_{i=3}^ng(W_i)+3\sum_{i=2}^n \left(b(W_i)-1\right)+b(W_1)\leq b(W_2).\] If $b(W_2)>1$, then $3(b(W_2)-1)>b(W_2)$, and we have a contradiction. Hence  $b(W_2)=1$, and the above inequality reduces to \[3g(V)+3\sum_{i=3}^ng(W_i)+3\sum_{i=3}^n(b(W_i)-1)+b(W_1)\leq 1.\] Since $b(W_1)\geq 1$ the only way for this inequality to hold is if:

\begin{itemize}
\item $g(V)=0$,
\item $g(W_i)=0$ for $i\geq 3$,
\item $b(W_i)=1$ for $i\geq 3$, and
\item $b(W_1)=1$.
\end{itemize}

Since none of the boundary components of $V$ are homotopically trivial, we cannot have $g(W_i)=0$ and $b(W_i)=1$ simultaneously. Hence we find that in fact $n=2$ and $V$ is an annulus with $S\setminus V=W_1\sqcup W_2$. Since $hV\nest W_1$ it is straightforward in this case to see that $g(W_1) \geq g(W_2)=g(U_1)$.
\end{proof}


We are now ready to prove Theorem \ref{thm:maxlineal}.  We recall its statement for the convenience of the reader.

\maxlineal*

\begin{proof}
Let $\alpha$ be a nonseparating curve. By the previous section, there exists a cobounded action $\Mod(S)\curvearrowright X$ where $X$ is a quasi-line and $T_\alpha$ acts loxodromically. Moreover, $[\Mod(S)\curvearrowright X]\in \H(\Mod(S))$ is maximal by Theorem \ref{lineal}. Since $\Mod(S)\curvearrowright X$ is lineal and lineal hyperbolic actions are also minimal by \cite[Corollary~4.12]{abo}, this structure is comparable only to the equivalence class of the trivial action.
\end{proof}

\begin{rem}
Using quasimorphisms allows Theorem \ref{lineal} to be applied for many other mapping classes $\phi$. As an example, if $\phi$ is a chiral pseudo-Anosov supported on a subsurface $V\subset S$ with $g(V)<g(S)/2$ and with one boundary component, then there exists a homogeneous quasimorphism $q:\Mod(S)\to \R$ not vanishing on $\phi$ (see \cite{scl}). This quasimorphism then gives rise to a lineal action of $\Mod(S)$ in which $\phi$ acts loxodromically. By Theorem \ref{lineal}, this action is maximal.
\end{rem}

We now turn our attention to elements of $\mc H(\Mod(S))$ that are not lineal.  For the following theorem, we use $\Mod(V)$ to denote the elements of $\Mod(S)$ supported on $V$. This is technically larger than the mapping class group of $V$ because it includes elements which permute the components of $\partial V$. We denote by $\PMod(V)$ the elements of $\Mod(S)$ supported on $V$ that fix $\partial V$ pointwise. For the convenience of the reader, we recall the notation $V,U_1,W_1,\ldots,W_n$: $V$ is a subsurface which is disjoint from some surface in its $\Mod(S)$-orbit. The complementary subsurfaces to $V$ are the subsurfaces $W_1,\ldots,W_n$. Since $V$ is disjoint from a surface in its $\Mod(S)$-orbit there is some translate of $V$ contained in one of the subsurfaces $W_1,\ldots,W_n$. The numbering is chosen so that this translate is contained in $W_1$. Finally, $U_1$ is the complementary subsurface to $W_1$.

\begin{thm}
Suppose  that $g(W_1)>g(U_1)$. If $[\Mod(S)\curvearrowright X]\in\mathcal{H}(\Mod(S)) \setminus \mathcal{H}_\ell(\Mod(S))$, then either $[\Mod(S)\curvearrowright X]\in \mathcal{H}^{qp}(\Mod(S))$ or the action $\Mod(V)\curvearrowright X$ is elliptic.
\end{thm}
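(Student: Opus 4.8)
The plan is to combine the connectivity of the graph $\Gamma = \Gamma(V)$ from the proof of Theorem \ref{lineal} with the dynamics of commuting loxodromic elements, in the same spirit as before, but now tracking what happens when the structure $[\Mod(S)\curvearrowright X]$ is \emph{not} lineal. Suppose the action $\Mod(V)\curvearrowright X$ is not elliptic. Then some element of $\Mod(V)$ acts either loxodromically or parabolically on $X$. I would first argue that there is in fact an element $\phi \in \PMod(V)$ acting non-elliptically: since $\PMod(V)$ has finite index in $\Mod(V)$ (it is the kernel of the permutation action on $\partial V$), any $g\in \Mod(V)$ with unbounded orbits has a power in $\PMod(V)$ with unbounded orbits, and passing to this power preserves the type (loxodromic or parabolic) of the isometry; moreover we may take $\phi$ to fix $\partial V$ pointwise up to isotopy.

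Next I would run the conjugation-along-$\Gamma$ argument. For $h \in \Mod(S)$, the conjugate $h\phi h^{-1}$ is supported on $hV$, fixes $\partial(hV)$ pointwise, and is non-elliptic on $X$. Whenever $hV \perp kV$, the elements $h\phi h^{-1}$ and $k\phi k^{-1}$ commute, hence (being non-elliptic, i.e. loxodromic or parabolic) they have the same fixed-point set on $\partial X$: two commuting non-elliptic isometries of a hyperbolic space share their limit set, which is a single point in the parabolic case and a pair of points in the loxodromic case. Walking along a path $V = h_0 V, h_1 V, \dots, h_r V = hV$ in $\Gamma$ — which exists since $\Gamma$ is connected by the proof of Theorem \ref{lineal}, using the hypothesis $g(W_1) > g(U_1)$ — I conclude that all the conjugates $h\phi h^{-1}$, for $h$ ranging over $\Mod(S)$, have the \emph{same} fixed-point set $\Lambda \subseteq \partial X$. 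Since the conjugates of $\phi$ generate a normal subgroup of $\Mod(S)$, and $\Mod(S)$ permutes $\partial X$ compatibly with conjugation, $\Mod(S)$ fixes $\Lambda$ setwise. Now $|\Lambda| \in \{1, 2\}$: if $|\Lambda| = 2$, then $\phi$ is loxodromic and $\Mod(S)$ fixes a pair of points on $\partial X$, so by coboundedness the action is lineal, contradicting $[\Mod(S)\curvearrowright X] \notin \mathcal H_\ell(\Mod(S))$; hence $|\Lambda| = 1$, i.e. $\Mod(S)$ fixes a unique point of $\partial X$. Then no element of $\Mod(S)$ acts loxodromically with respect to a \emph{different} fixed pair (all loxodromics, if any, have $\Lambda$ as one of their fixed points), and since the action is cobounded and non-elliptic, it is either parabolic or quasi-parabolic; parabolic actions are never cobounded, so $[\Mod(S)\curvearrowright X] \in \mathcal H^{qp}(\Mod(S))$.

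To finish I must also rule out the remaining possibility carefully: a priori $\Mod(V) \curvearrowright X$ could be non-elliptic while \emph{every} element of $\PMod(V)$ is elliptic, with unboundedness coming only from a mapping class permuting the boundary components of $V$; the finite-index argument above handles this, but I should double check that taking powers genuinely lands in $\PMod(V)$ and not merely in the stabilizer of $\partial V$ as a set — here one uses that $\PMod(V)$ contains a finite-index subgroup of $\Mod(V)$, so a sufficiently divisible power of any element of $\Mod(V)$ lies in $\PMod(V)$. I expect the \textbf{main obstacle} to be the step asserting that two commuting non-elliptic isometries of a (not necessarily proper) hyperbolic space share their fixed-point sets — this is standard for loxodromics, but for parabolics one must invoke the appropriate statement about the unique fixed point of a parabolic being preserved by its centralizer (as in the Busemann quasimorphism discussion of Section \ref{sec:background}), and one must handle the mixed loxodromic-parabolic case, where a loxodromic and a commuting parabolic cannot coexist, forcing both to be parabolic or both loxodromic. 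Once that dynamical lemma is in place, the connectivity of $\Gamma$ propagates the fixed-point set across the whole orbit exactly as in Theorem \ref{lineal}, and the trichotomy of action types from Section \ref{sec:actions} closes the argument.
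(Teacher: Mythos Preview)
Your approach is broadly correct and reaches the same conclusion, but it differs from the paper's in organization, and one step has a subtle gap worth flagging.

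The paper opens by invoking Theorem \ref{lineal}: since $[\Mod(S)\curvearrowright X]$ is not lineal, $\Mod(V)$ contains no loxodromics, so by Gromov's classification the action $\Mod(V)\curvearrowright X$ is either elliptic or parabolic. This dispenses immediately with your $|\Lambda|=2$ case, which is really just a rederivation of Theorem \ref{lineal} embedded inside this proof. In the remaining parabolic case the paper propagates the \emph{group-level} fixed point: it shows $\Fix(\PMod(V))=\{p\}$ (using that a finite-index subgroup of a parabolic group cannot be elliptic), and then, since every element of $\PMod(h_iV)$ commutes with every element of $\PMod(h_{i+1}V)$, each $\phi\in\PMod(h_{i+1}V)$ preserves the one-point set $\bigcap_{\psi\in\PMod(h_iV)}\Fix(\psi)=\{p\}$. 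This step works for \emph{arbitrary} $\phi$, elliptic or not, and needs only the trivial fact that commuting isometries preserve each other's fixed-point sets---so the ``main obstacle'' you anticipate never arises.

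Your argument instead tracks a single element $\phi$ and its conjugates. The gap is your opening move: you assert that if $\Mod(V)\curvearrowright X$ is not elliptic then some element of $\Mod(V)$ is non-elliptic. When the action has a loxodromic this is clear, but in the genuinely parabolic case it is not automatic that any individual element is parabolic---there are parabolic group actions on hyperbolic spaces in which every element is elliptic, and nothing you have said rules this out for $\PMod(V)$. The paper's group-level propagation sidesteps this entirely by never needing a distinguished non-elliptic element. If you patch this step (most naturally by passing to $\Fix(\PMod(V))$ as the paper does), the two arguments converge; what the paper buys is that it avoids both the existence-of-$\phi$ issue and the commuting-parabolics lemma you flagged, at the cost of citing Theorem \ref{lineal} rather than reproving it inline.
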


\begin{proof}
Since $X$ is not a quasi-line, Theorem \ref{lineal} implies that $\Mod(V)$ contains no loxodromics with respect to the action on $X$. By the classification of hyperbolic actions, $\Mod(V)\curvearrowright X$ is parabolic or elliptic.

Suppose that $\Mod(V)\curvearrowright X$ is parabolic so that $\Mod(V)$ fixes a single point $p\in \partial X$. We will show that all of $\Mod(S)$ fixes $p$.  By the classification of hyperbolic actions, since $\Mod(S)\curvearrowright X$ is cobounded, it must then be quasi-parabolic.

We first show that \begin{equation}\label{eqn:fixpmod}\Fix(\PMod(V))=\Fix(\Mod(V))=\{p\}.\end{equation} Since $\PMod(V)\leq \Mod(V)$, we clearly have $p\in \Fix(\PMod(V))$. However if $| \Fix(\PMod(V)) |>1$ then $\PMod(V)\curvearrowright X$ is elliptic, since $\PMod(V)\leq \Mod(V)$ contains no loxodromics. In this case, since $\Mod(V)$ contains an elliptic subgroup of finite index, it must be elliptic itself. But we are supposing that $\Mod(V)$ is parabolic, and thus (\ref{eqn:fixpmod}) follows.

From the proof of Theorem \ref{lineal}, we know that the graph $\Gamma=\Gamma(V)$ is connected. Consider $h\in \Mod(S)$ and a path $V=h_0 V, h_1V,\ldots,h_rV=hV$ in $\Gamma$. We claim that $\PMod(h_i V)$ fixes $p$ for all $i$. To prove the claim,
suppose for induction that $\PMod(h_i V)\cdot p=p$. We want to show that $\PMod(h_{i+1})\cdot p=p$ as well. We have that $h_iV\perp h_{i+1}V$;   every element of $\PMod(h_iV)$ fixes $\partial h_i V$ pointwise; and every element of $\PMod(h_{i+1}V)$ fixes $\partial h_{i+1}V$ pointwise. Thus every element of $\PMod(h_i V)$ commutes with every element of $\PMod(h_{i+1}V)$. In particular, if $\phi \in \PMod(h_{i+1}V)$ then it fixes setwise the set $\Fix(\psi)$ for every $\psi\in \PMod(h_i V)$.   Therefore $\phi$ also fixes setwise the set \[\bigcap_{\psi \in \PMod(h_i V)} \Fix(\psi)=\Fix(\PMod(h_i V))=\{p\}.\] Since $\phi\in \PMod(h_{i+1}V)$ is arbitrary, this proves the claim.

The theorem now follows because  \[\{p\}=\Fix(\PMod(h_i V))=\Fix(h_i \PMod(V) h_i^{-1})=h_i\Fix(\PMod(V))=\{h_ip\},\] and in particular $hp=h_np=p$.
\end{proof}


\section{Fundamental groups of flip graph manifolds}
\label{sec:flipgraph}

In this section, we prove that the fundamental groups of most flip graph manifolds do not have largest hyperbolic actions by applying Lemma \ref{lem:mainlem}. As for mapping class groups in the previous section, we will construct two quasi-trees of metric spaces. However, we need to divide this construction into two cases, depending on the flip graph manifold (in particular, the structure of its underlying graph).  In the first case (Section \ref{sec:noloops}), we use the action of the fundamental group of the flip graph manifold on the quasi-trees of metric spaces to directly apply Lemma \ref{lem:mainlem} and conclude.  In the second case (Section \ref{sec:loops}), we are only able to obtain an action of a finite-index subgroup of the fundamental group on the quasi-trees of metric spaces, which is not sufficient to apply our main lemma.  In this case, we will use the quasi-trees of metric spaces to construct quasimorphisms, which will in turn allow us to construct two lineal actions to which we can apply our main lemma.

\subsection{Flip graph manifolds}
We first recall the definition and some fundamental facts about flip graph manifolds.
A connected 3--manifold $M$ is a \textit{flip graph manifold} if it has the following form. The manifold $M$ is made up of finitely many \textit{pieces} which are trivial circle bundles $S\times S^1$ where $S$ is a surface with negative Euler characteristic and with boundary (and no punctures). A boundary component $c$ of the base $S$ of a piece defines a torus boundary component $c \times S^1$ of the piece, and these torus boundary components are glued in pairs by orientation-reversing homeomorphisms which interchange the boundary component and fiber directions of two distinct torus boundary components.

The manifold $M$ is  homeomorphic to a graph of spaces where the vertex spaces are the pieces of the decomposition and edge spaces correspond to boundary tori. We denote by $\Gamma$ the underlying graph. The universal cover $\tilde{M}$ is homeomorphic to a tree of spaces with an underlying tree which we denote $\tilde{\Gamma}$. In $\tilde{M}$ the vertex spaces are universal covers of the pieces $S\times S^1$, which are homeomorphic to products of closed convex subsets of the hyperbolic plane $\hyp^2$ with $\R$. These vertex spaces have boundary consisting of infinitely many copies of the plane $\R^2$ and these correspond to the edge spaces of $\tilde{M}$. For simplicity, we will refer to the vertex spaces of $\tilde{M}$ as \textit{lifts} of the pieces of $M$.

We endow $M$ with a locally CAT(0) metric as follows. For a piece $S\times S^1$, the base $S$ admits a hyperbolic metric with geodesic boundary components of length one. Further, we endow $S^1$ with a Euclidean metric of length one by identifying it with the unit interval $[0,1]$ with the endpoints identified. We endow the piece $S\times S^1$ with the product $\ell^2$ metric. We also require the identifications of torus boundary components to be given by orientation-reversing isometries that have the form $(x,y)\mapsto (y,x)$ in appropriate coordinates.

The universal cover $\tilde{M}$ inherits a pullback metric. If $X=S\times S^1$, then its lifts are each isometric to $\tilde{X}=\tilde{S}\times \R$, where $\tilde{S}$ has the pullback metric induced by the chosen hyperbolic metric on $S$ and $\R$ has the standard Euclidean metric. The universal cover $\tilde{S}$ is isometric to a closed convex subset of $\hyp^2$ with infinitely many geodesic boundary components. The vertex spaces $\tilde{S}\times \R$ are glued together along copies of the Euclidean plane $\R^2$ where the identifications are given by orientation reversing isometries $(x,y)\mapsto (y,x)$ in appropriate coordinates.

For a piece $X$ and a lift $\tilde{X}$ to $\tilde{M}$, isometric to $\tilde{S}\times \R$, the relation $(x,t)\sim_{\tilde{X}} (y,t)$ for $x,y\in \tilde{S}$ gives rise to a quotient space $\ell_{\tilde{X}}=\tilde{X}/\sim_{\tilde{X}}$ that inherits a metric with respect to which it is isometric to the real line. We denote by $p_{\tilde{X}}:\tilde{X}\to \ell_{\tilde{X}}$ the Lipschitz quotient map.

The fundamental group $\pi_1(M)$ acts by isometries on $\tilde{M}$. If $X$ is a piece of $M$, $\tilde{X}$ is a lift, and $g\in \pi_1(M)$, then $g\tilde{X}$ is another lift and the isometry \[g|_{\tilde{X}}:\tilde{X}\to g\tilde{X}\] respects the equivalence relations on $\tilde{X}$ and $g\tilde{X}$. In other words, if $p,q\in \tilde{X}$ and $p\sim_{\tilde{X}} q$ then $gp\sim_{g\tilde{X}} gq$. Hence $g$ induces a map $\ell_{\tilde{X}}\to \ell_{g\tilde{X}}$. This map is an isometry.

\subsection{Projections}

Let $v$ be a vertex of $\tilde{\Gamma}$. Then the vertex space $\tilde{M}_v$ is bounded by infinitely many Euclidean planes. If $P$ and $Q$ are two distinct such planes, we may consider the set of points of $Q$ which are closest to $P$. Denote this set by $\rho_Q(P)$. In other words, we define \[\rho_Q(P)=\{q\in Q: d(q,P)\leq d(q',P) \text{ for any } q'\in Q\}.\] Then $\rho_Q(P)$ is a geodesic line in $Q$. Parametrizing $\tilde{M}_v$ as $H_v\times \R$, where $H_v$ is a closed convex subset of $\hyp^2$, $P$ and $Q$ have the form $\alpha \times \R$ and $\beta \times \R$, respectively, where $\alpha$ and $\beta$ are boundary components of $H_v$. If $a$ is the closest point on $\beta$ to $\alpha$, then $\rho_Q(P)$ is parametrized as $\{a\}\times \R$. See Figure \ref{projection}.

\begin{figure}[h]

\centering
\begin{tabular}{c c}

\centering
\def\svgwidth{0.4\textwidth}
\begingroup%
  \makeatletter%
  \providecommand\color[2][]{%
    \errmessage{(Inkscape) Color is used for the text in Inkscape, but the package 'color.sty' is not loaded}%
    \renewcommand\color[2][]{}%
  }%
  \providecommand\transparent[1]{%
    \errmessage{(Inkscape) Transparency is used (non-zero) for the text in Inkscape, but the package 'transparent.sty' is not loaded}%
    \renewcommand\transparent[1]{}%
  }%
  \providecommand\rotatebox[2]{#2}%
  \newcommand*\fsize{\dimexpr\f@size pt\relax}%
  \newcommand*\lineheight[1]{\fontsize{\fsize}{#1\fsize}\selectfont}%
  \ifx\svgwidth\undefined%
    \setlength{\unitlength}{493.6083696bp}%
    \ifx\svgscale\undefined%
      \relax%
    \else%
      \setlength{\unitlength}{\unitlength * \real{\svgscale}}%
    \fi%
  \else%
    \setlength{\unitlength}{\svgwidth}%
  \fi%
  \global\let\svgwidth\undefined%
  \global\let\svgscale\undefined%
  \makeatother%
  \begin{picture}(1,1)%
    \lineheight{1}%
    \setlength\tabcolsep{0pt}%
    \put(0,0){\includegraphics[width=\unitlength,page=1]{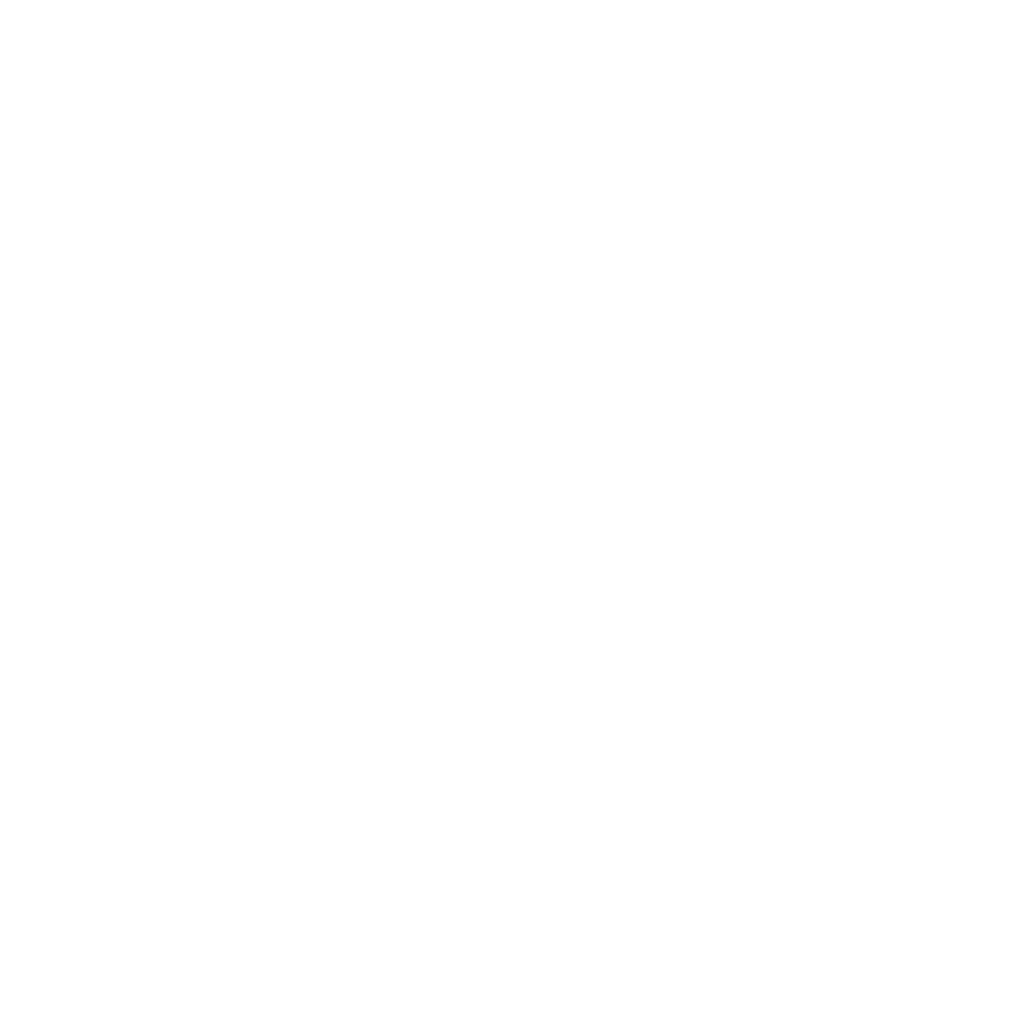}}%
    \put(0.55693982,0.50565411){\color[rgb]{0,0,0}\makebox(0,0)[lt]{\lineheight{1.25}\smash{\begin{tabular}[t]{l}$H_v$\end{tabular}}}}%
    \put(0.15728069,0.48348528){\color[rgb]{0,0,0}\makebox(0,0)[lt]{\lineheight{1.25}\smash{\begin{tabular}[t]{l}$\alpha$\end{tabular}}}}%
    \put(0.66321696,0.25566537){\color[rgb]{0,0,0}\makebox(0,0)[lt]{\lineheight{1.25}\smash{\begin{tabular}[t]{l}$\beta$\end{tabular}}}}%
    \put(0,0){\includegraphics[width=\unitlength,page=2]{hypplane.pdf}}%
  \end{picture}%
\endgroup%
 &

\centering
\def\svgwidth{0.4\textwidth}
\begingroup%
  \makeatletter%
  \providecommand\color[2][]{%
    \errmessage{(Inkscape) Color is used for the text in Inkscape, but the package 'color.sty' is not loaded}%
    \renewcommand\color[2][]{}%
  }%
  \providecommand\transparent[1]{%
    \errmessage{(Inkscape) Transparency is used (non-zero) for the text in Inkscape, but the package 'transparent.sty' is not loaded}%
    \renewcommand\transparent[1]{}%
  }%
  \providecommand\rotatebox[2]{#2}%
  \newcommand*\fsize{\dimexpr\f@size pt\relax}%
  \newcommand*\lineheight[1]{\fontsize{\fsize}{#1\fsize}\selectfont}%
  \ifx\svgwidth\undefined%
    \setlength{\unitlength}{493.6083696bp}%
    \ifx\svgscale\undefined%
      \relax%
    \else%
      \setlength{\unitlength}{\unitlength * \real{\svgscale}}%
    \fi%
  \else%
    \setlength{\unitlength}{\svgwidth}%
  \fi%
  \global\let\svgwidth\undefined%
  \global\let\svgscale\undefined%
  \makeatother%
  \begin{picture}(1,1.61027072)%
    \lineheight{1}%
    \setlength\tabcolsep{0pt}%
    \put(0,0){\includegraphics[width=\unitlength,page=1]{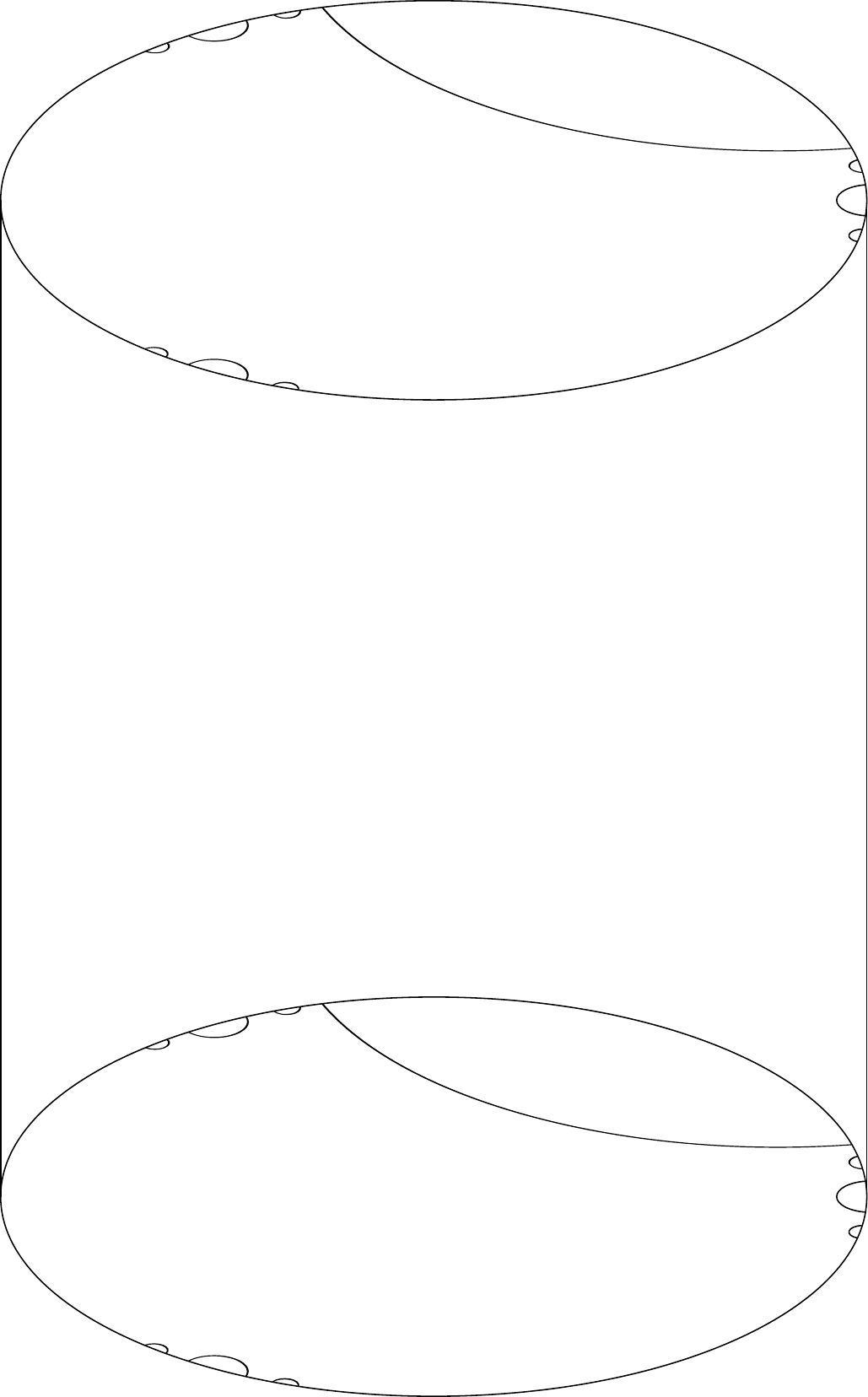}}%
    \put(0.26914236,1.43893413){\color[rgb]{0,0,0}\makebox(0,0)[lt]{\lineheight{1.25}\smash{\begin{tabular}[t]{l}$P$\end{tabular}}}}%
    \put(0.70819012,1.34420803){\color[rgb]{0,0,0}\makebox(0,0)[lt]{\lineheight{1.25}\smash{\begin{tabular}[t]{l}$Q$\end{tabular}}}}%
    \put(0.52256414,0.75501465){\color[rgb]{0,0,0}\makebox(0,0)[lt]{\lineheight{1.25}\smash{\begin{tabular}[t]{l}$\rho_Q(P)$\end{tabular}}}}%
    \put(0,0){\includegraphics[width=\unitlength,page=2]{projection.pdf}}%
  \end{picture}%
\endgroup%

\end{tabular}

\caption{The definition of the projection of one boundary plane onto another in a vertex space of $\tilde{M}$. The dotted line denotes a shortest geodesic from $\alpha$ to $\beta$ (which is orthogonal to $\alpha$ and $\beta$ at its endpoints).}
\label{projection}
\end{figure}

If $P$ and $Q$ are distinct boundary planes of $\tilde{M}_v$, as above, $w$ is adjacent to $v$ in $\tilde{\Gamma}$,  and $\tilde{M}_w$ is glued to $\tilde{M}_v$ along $Q$, then the image $p_{\tilde{M}_w}(\rho_Q(P))$ is a single point of $\ell_{\tilde{M}_w}$. If $v$ and $w$ are vertices of $\tilde{\Gamma}$ at distance at least two apart then we define a projection from $\ell_{\tilde{M}_v}$ onto a point of $\ell_{\tilde{M}_w}$ as follows. Consider the unique geodesic $[v,w]$ oriented from $v$ to $w$ in $\tilde{\Gamma}$. Let $u',u,w$ be the last three vertices of $[v,w]$, occurring in that order. Then $\tilde{M}_u$ is glued to $\tilde{M}_{u'}$ along a unique Euclidean boundary plane $P$ and to $\tilde{M}_w$ along a Euclidean boundary plane $Q$ which is distinct from $P$. We define the projection of $\ell_{\tilde{M}_v}$ to $\ell_{\tilde{M}_w}$ to be the point $p_{\tilde{M}_w}(\rho_Q(P))$. We denote this point by $\pi_{\tilde{M}_w}(\tilde{M}_v)$. See Figure \ref{treegeod}.

\begin{figure}[h]
\centering

\def\svgwidth{0.8\textwidth}
\begingroup%
  \makeatletter%
  \providecommand\color[2][]{%
    \errmessage{(Inkscape) Color is used for the text in Inkscape, but the package 'color.sty' is not loaded}%
    \renewcommand\color[2][]{}%
  }%
  \providecommand\transparent[1]{%
    \errmessage{(Inkscape) Transparency is used (non-zero) for the text in Inkscape, but the package 'transparent.sty' is not loaded}%
    \renewcommand\transparent[1]{}%
  }%
  \providecommand\rotatebox[2]{#2}%
  \newcommand*\fsize{\dimexpr\f@size pt\relax}%
  \newcommand*\lineheight[1]{\fontsize{\fsize}{#1\fsize}\selectfont}%
  \ifx\svgwidth\undefined%
    \setlength{\unitlength}{694.10453256bp}%
    \ifx\svgscale\undefined%
      \relax%
    \else%
      \setlength{\unitlength}{\unitlength * \real{\svgscale}}%
    \fi%
  \else%
    \setlength{\unitlength}{\svgwidth}%
  \fi%
  \global\let\svgwidth\undefined%
  \global\let\svgscale\undefined%
  \makeatother%
  \begin{picture}(1,0.10571034)%
    \lineheight{1}%
    \setlength\tabcolsep{0pt}%
    \put(0,0){\includegraphics[width=\unitlength,page=1]{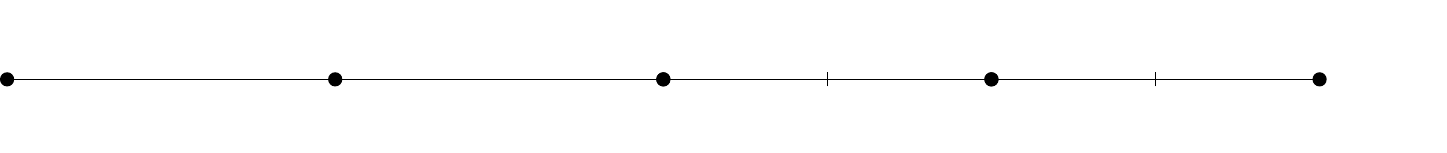}}%
    \put(0.00498484,0.00761858){\color[rgb]{0,0,0}\makebox(0,0)[lt]{\lineheight{1.25}\smash{\begin{tabular}[t]{l}$v$\end{tabular}}}}%
    \put(0.91262911,0.00761858){\color[rgb]{0,0,0}\makebox(0,0)[lt]{\lineheight{1.25}\smash{\begin{tabular}[t]{l}$w$\end{tabular}}}}%
    \put(0.68571804,0.00761858){\color[rgb]{0,0,0}\makebox(0,0)[lt]{\lineheight{1.25}\smash{\begin{tabular}[t]{l}$u$\end{tabular}}}}%
    \put(0.56685981,0.07245031){\color[rgb]{0,0,0}\makebox(0,0)[lt]{\lineheight{1.25}\smash{\begin{tabular}[t]{l}$P$\end{tabular}}}}%
    \put(0.79377086,0.07245031){\color[rgb]{0,0,0}\makebox(0,0)[lt]{\lineheight{1.25}\smash{\begin{tabular}[t]{l}$Q$\end{tabular}}}}%
    \put(0.45880685,0.00761851){\color[rgb]{0,0,0}\makebox(0,0)[lt]{\lineheight{1.25}\smash{\begin{tabular}[t]{l}$u'$\end{tabular}}}}%
  \end{picture}%
\endgroup%

\caption{The projection of $\ell_{\tilde{M}_v}$ to $\ell_{\tilde{M}_w}$ is obtained by projecting the boundary plane $P$ onto the plane $Q$ and then projecting the resulting line to the vertical direction of $\tilde{M}_w$.}
\label{treegeod}
\end{figure}

\noindent We remark that the projection $\pi_{\tilde{M}_w}(\tilde{M}_v)$ coarsely agrees with the composition of the \emph{closest point projection} of $\tilde{M}_v$ to $\tilde{M}_w$, together with the quotient $\tilde{M}_w\to \ell_{\tilde{M}_w}$. However, we do not use this fact in the sequel and leave the proof to the interested reader.

\subsection{The case that $\Gamma$ contains no loops}
\label{sec:noloops}

In this subsection we assume that $\Gamma$ contains no loops. In the next section we explain how this restriction may be removed.

 Let $X$ and $Y$ be two adjacent vertex spaces of $M$. In $\tilde{M}$ we may choose lifts $\tilde{X}_0$ and $\tilde{Y}_0$ which are glued along a common Euclidean plane $P$. Writing $\tilde{X}_0=H_0\times \R$ and $\tilde{Y}_0=H_0'\times \R$ where $H_0$ and $H_0'$ are closed convex subsets of $\hyp^2$, there are boundary components $\alpha_0$ and $\beta_0$ of $H_0$ and $H_0'$, respectively, such that $P$ is identified with the product $\alpha_0\times \beta_0$. Moreover, there are elements $a$ and $b$ of $\pi_1(M)$ (corresponding to orthogonal simple closed geodesics in the boundary torus along which $X$ and $Y$ are glued) such that

\begin{itemize}
\item $a$ acts on $\tilde{X}_0$ as $\phi\times \operatorname{id}$ in the product structure $H_0\times \R$ where $\phi$ is a loxodromic isometry of $H_0$ with axis $\alpha_0$,
\item $b$ acts on $\tilde{Y}_0$ as $\psi \times \operatorname{id}$ in the product structure $H_0'\times \R$ where $\psi$ is a loxodromic isometry of $H_0'$ with axis $\beta_0$.
\end{itemize}

\noindent Consequently we see that $a$ and $b$ commute and fix both of the domains $\tilde{X}_0$ and $\tilde{Y}_0$ setwise. In this section we prove:

\begin{thm}
\label{noloops}
There exists a hyperbolic space $\mathcal{C}(\mathbf{X})$ with an action of $\pi_1(M)$ by isometries such that $a$ acts elliptically on $\mathcal{C}(\mathbf{X})$ while $b$ acts loxodromically. Similarly there exists a space $\mathcal{C}(\mathbf{Y})$ with an action of $\pi_1(M)$ in which $a$ acts loxodromically and $b$ acts elliptically.
\end{thm}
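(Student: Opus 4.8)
The plan is to produce $\mathcal{C}(\mathbf{X})$ and $\mathcal{C}(\mathbf{Y})$ as quasi-trees of metric spaces via the Bestvina--Bromberg--Fujiwara machinery of Section~\ref{sec:bbf}. For $\mathcal{C}(\mathbf{X})$, I would take the index set $\mathbf{X}$ to be the set of all lifts of the piece $X$ to $\tilde M$ (equivalently, the $\pi_1(M)$--orbit of the vertex of $\tilde\Gamma$ underlying $\tilde X_0$), with $\mathcal{C}(\tilde X):=\ell_{\tilde X}\cong\R$ for $\tilde X\in\mathbf{X}$, and with the projection $\pi_{\tilde X}(\tilde X'):=\pi_{\tilde M_{\tilde X}}(\tilde M_{\tilde X'})$ the point of $\ell_{\tilde X}$ defined earlier via nearest-point projection between boundary planes. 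Since $\Gamma$ has no loops, no two lifts of $X$ are adjacent in $\tilde\Gamma$, so any two distinct elements of $\mathbf{X}$ lie at distance $\geq 2$ and the projections are all defined; each is a single point, so $\diam\pi_{\tilde X}(\tilde X')=0$ and axiom (P0) holds.

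The heart of the argument, and the step I expect to be the main obstacle, is verifying axioms (P1) and (P2), i.e.\ that the projection distances $d^\pi_{\tilde Z}(\cdot,\cdot)$ behave like those of points in a tree. The plan is to reduce these to the tree structure of $\tilde\Gamma$ together with the CAT(0) geometry of the vertex spaces. Given distinct $\tilde X_1,\tilde X_2,\tilde X_3\in\mathbf{X}$, let $m$ be the median of the tree-triangle they span in $\tilde\Gamma$; the key observations are (i) if the direction from $m$ toward $\tilde X_3$ differs from the directions toward both $\tilde X_1$ and $\tilde X_2$, then $\pi_{\tilde M_{\tilde X_3}}(\tilde M_{\tilde X_1})=\pi_{\tilde M_{\tilde X_3}}(\tilde M_{\tilde X_2})$, so $d^\pi_{\tilde X_3}(\tilde X_1,\tilde X_2)=0$ — this already forces at most one of the three numbers in (P1) to be nonzero for a purely tree-theoretic reason, and it confines the indices relevant to (P2) to the finite geodesic $[\tilde X_1,\tilde X_2]\subset\tilde\Gamma$; and (ii) for the remaining (``innermost'') index one needs a uniform bound, which comes from the fact that nearest-point projection onto a boundary plane $\alpha\times\R$ inside a vertex space $H\times\R$ splits as (nearest-point projection of $\hyp^2$ onto the geodesic $\alpha$) $\times\,\mathrm{id}_\R$, and geodesics in $\hyp^2$ are uniformly strongly contracting. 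Carrying this out carefully through the flip identifications (which interchange base and fibre directions) is the technical core.

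Granting (P0)--(P2), each $\ell_{\tilde X}$ is a line, hence a quasi-tree with the uniform bottleneck constant $0$, so Theorem~\ref{thm:bbfqtree} gives that $\mathcal{C}_K(\mathbf{X})$ is a quasi-tree, in particular hyperbolic, for $K$ large enough. The group $\pi_1(M)$ acts on $\tilde M$ preserving the vertex-space decomposition, hence permutes $\mathbf{X}$; for $g\in\pi_1(M)$ the restriction $g\colon\tilde X\to g\tilde X$ descends to the isometry $F_g^{\tilde X}\colon\ell_{\tilde X}\to\ell_{g\tilde X}$ recorded in Section~\ref{sec:flipgraph}, these satisfy the cocycle identity, and because the projections are honest nearest-point projections they are equivariant, $\pi_{g\tilde X}(g\tilde X')=g\,\pi_{\tilde X}(\tilde X')$. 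By the induced-action criterion of Section~\ref{sec:bbf} we get an isometric action $\pi_1(M)\curvearrowright\mathcal{C}_K(\mathbf{X})=:\mathcal{C}(\mathbf{X})$.

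Finally I would read off the dynamics of $a$ and $b$. Both fix $\tilde X_0$ setwise, hence fix the vertex $\tilde X_0\in\mathbf{X}$. In the coordinates $\tilde X_0=H_0\times\R$, the element $a$ acts as $\phi\times\mathrm{id}$, so it acts trivially on $\ell_{\tilde X_0}=\tilde X_0/\!\sim_{\tilde X_0}$; since $\ell_{\tilde X_0}$ is isometrically embedded in $\mathcal{C}(\mathbf{X})$ by Theorem~\ref{thm:isomembed} and $a$ fixes $\tilde X_0$, the element $a$ fixes $\ell_{\tilde X_0}$ pointwise, so $a$ is elliptic. Tracing the flip gluing of $\tilde X_0$ to $\tilde Y_0$ through the product coordinates, the element $b$ — which on $\tilde Y_0$ is $\psi\times\mathrm{id}$ with $\psi$ translating $\beta_0$ — acts on $\tilde X_0$ as $\mathrm{id}_{H_0}$ times translation by the translation length of $\psi$, hence translates $\ell_{\tilde X_0}$ by a nonzero amount; as $\ell_{\tilde X_0}$ is an isometrically embedded (bi-infinite quasigeodesic) line preserved by $b$, the orbit map $n\mapsto b^n x_0$ is a quasi-isometric embedding, so $b$ is loxodromic. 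The space $\mathcal{C}(\mathbf{Y})$ is constructed in exactly the same way from the lifts of the piece $Y$; the identical computation with the roles of $a$ and $b$ (and of $\phi$ and $\psi$) exchanged shows that $a$ acts loxodromically and $b$ elliptically on $\mathcal{C}(\mathbf{Y})$.
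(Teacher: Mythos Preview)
Your overall approach is exactly that of the paper: take $\mathbf{X}$ to be the lifts of $X$, set $\mathcal{C}(A)=\ell_A$, use the projections already defined, invoke the BBF axioms, and read off the dynamics of $a$ and $b$ from the isometric embedding of $\ell_{\tilde X_0}$. The endgame (the action of $\pi_1(M)$, the flip identification showing $b$ translates $\ell_{\tilde X_0}$, the appeal to Theorems~\ref{thm:isomembed} and~\ref{thm:bbfqtree}) is correct and matches the paper.

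The one genuine inaccuracy is in your sketch of (P1)--(P2). Your observation (i) is false as stated: if the median $m$ satisfies $d_{\tilde\Gamma}(m,\tilde X_3)=1$, the geodesics $[\tilde X_1,\tilde X_3]$ and $[\tilde X_2,\tilde X_3]$ share only their last edge, so the ``third-to-last'' vertices differ and the two projections to $\ell_{\tilde X_3}$ need not coincide. Consequently it is \emph{not} true that at most one of the three numbers in (P1) is nonzero for purely tree-theoretic reasons, and the set of indices relevant to (P2) is not confined to $[\tilde X_1,\tilde X_2]$ but to its $1$--neighbourhood in $\tilde\Gamma$. In the paper's proof of Lemma~\ref{axioms} this is exactly the ``type (ii)'' case: when $C$ corresponds to a vertex adjacent to some $u'\in[v,w]$, one can have both $d^\pi_C(A,B)$ and $d^\pi_A(B,C)$ nonzero, and a Behrstock-type inequality (if one exceeds $\theta$ then the other does not) must be proved using thin-hexagon estimates in $\mathbb{H}^2$. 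Your ingredient (ii)---that closest-point projection to a boundary geodesic in $\mathbb{H}^2$ is strongly contracting---is precisely what is needed, but it has to be deployed for (P1) as well as for the finiteness in (P2), and for vertices adjacent to the geodesic rather than just an ``innermost'' one. Once you restructure the case analysis this way, your plan goes through and coincides with the paper's.
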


The spaces $\mathcal{C}(\mathbf{X})$ and $\mathcal{C}(\mathbf{Y})$ are quasi-trees of metric spaces as described in Section \ref{sec:bbf}. The constructions are completely analogous, so we focus only on the case of $\mathcal{C}(\mathbf{X})$.

The set of domains $\mathbf{X}$ is the set of lifts of $X$ to $\tilde{M}$. In particular, our chosen lift $\tilde{X}_0$ is an element of  $\mathbf{X}$. Associated to a domain $A\in \mathbf{X}$, there is an associated hyperbolic space $\mathcal{C}(A)=\ell_A$. We define the projections $\pi_B(A)$ for $A,B\in \mathbf{X}$ as above. These are well-defined because the vertices in $\tilde{\Gamma}$ corresponding to $A$ and $B$ (i.e., the vertices $v$ and $w$ such that $A=\tilde{M}_v$ and $B=\tilde{M}_w$) are distance at least two apart in $\tilde{\Gamma}$. This follows from the fact that $\Gamma$ has no loops, so that all edges of $\tilde{\Gamma}$ join vertices which project to distinct vertices in $\Gamma$ after quotienting by the action of $\pi_1(M)$. We define the distances \[d^\pi_C(A,B)=d_{\ell_C}(\pi_{C}(A),\pi_{C}(B))\] where $C\in \mathbf{X}$, $A,B\in \mathbf{X}\setminus \{C\}$, and $d_{\ell_C}$ denotes distance in the line $\ell_C$. The main technical result of this subsection is that these distances satisfy axioms (P0)-(P2) from Section \ref{sec:bbf}.

\begin{lem}
\label{axioms}
There exists $\theta>0$ large enough that the domains $\mathbf{X}$, spaces $\mathcal{C}(A)$ for $A\in \mathbf{X}$, and projections $\pi_{A}$ satisfy the axioms (P0)-(P2).
\end{lem}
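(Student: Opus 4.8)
The plan is to verify axioms (P0)--(P2) by exploiting the product structure $\tilde M_v = H_v \times \R$ of each vertex space together with coarse-geometric facts about projections between boundary planes in a CAT(0) space and about nearest-point projections along the tree $\tilde\Gamma$. The key geometric input is that in a vertex space $\tilde M_v$, the projection $\pi_Q(P)$ of one boundary plane onto another is a geodesic line of the form $\{a\}\times\R$, and after applying the Lipschitz quotient $p_{\tilde M_w}$ this collapses the $H$-direction and records only the $\R$-coordinate of the gluing locus. So all the distances $d^\pi_C(A,B)$ are really measuring how far apart two \emph{points} on the hyperbolic factor $\beta\subset H_u$ get projected, read off along the fiber $\R$ of the \emph{next} vertex space. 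First I would set up notation carefully for the "last three vertices $u',u,w$" picture of Figure \ref{treegeod} and record the basic estimate: for a fixed vertex $u$ with boundary planes $P_0$ (towards $u'$) and $Q$ (towards $w$), the map sending a boundary plane $P$ of $\tilde M_u$ to the point $p_{\tilde M_w}(\pi_Q(P)) \in \ell_{\tilde M_w}$ factors (coarsely) through the nearest-point projection in $H_u$ of the geodesic $\partial_P H_u$ onto $\partial_Q H_u$, and the composition with the gluing isometry $(x,y)\mapsto(y,x)$ converts a displacement in the boundary-component direction of one plane into a displacement in the fiber direction of the next. Because all the hyperbolic factors $H_v$ are convex subsets of $\hyp^2$ with a uniform lower bound on the distance between distinct boundary geodesics (they project to geodesic boundary curves of length one on the compact pieces, of which there are finitely many), these nearest-point projections satisfy the usual coarse Lipschitz and coarse contraction bounds with \emph{uniform} constants.

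For (P0) I would show $\diam\pi_C(A)$ is uniformly bounded: unwinding the definition, $\pi_C(A)$ is a single point $p_{\tilde M_w}(\pi_Q(P))$, so its diameter is literally zero --- the only care needed is when the relevant geodesic segment $[v,w]$ is short, but even then the image is a point or a uniformly bounded set. For (P2), the Behrstock-type inequality, I would argue as follows: if $d^\pi_C(A,B)$ is large, then the geodesics $[v_A, v_C]$ and $[v_B, v_C]$ in the tree $\tilde\Gamma$ must share a long enough terminal segment ending at $v_C$ (otherwise the two projections land in the same bounded subset of $\ell_{\tilde M_{v_C}}$), and the contraction property of nearest-point projection between boundary planes of a single vertex space then forces the projections of $A$ and $B$ to a \emph{third} domain $\tilde M_{v_{C'}}$ to be coarsely equal --- this is the standard "if $d_Y$ is large then $Y$ lies on a geodesic-like path between $X$ and $Z$ and the other projections are controlled" phenomenon. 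For (P1), finiteness, I would observe that $d^\pi_C(A,B) > \theta$ can only happen when the tree geodesic $[v_A,v_B]$ passes through (or very near) $v_C$ and the gluing plane at $v_C$ separates $A$-side from $B$-side with a large projection; since $[v_A,v_B]$ is a finite path in $\tilde\Gamma$, only finitely many vertices $v_C$ can occur, and at each such vertex the projection-distance between two \emph{fixed} planes of a single vertex space is finite, so large values occur for only finitely many $C$.

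The main obstacle I expect is (P1)--(P2): proving the Behrstock inequality and the finiteness axiom with a \emph{single uniform} constant $\theta$ across all the (infinitely many, but finitely many up to the $\pi_1(M)$-action) vertex spaces. The subtlety is that the "projection" here is a two-step composite --- nearest-point projection $\pi_Q(P)$ inside a hyperbolic vertex space, followed by the Lipschitz collapse $p_{\tilde M_w}$ onto the fiber line, with a coordinate-swapping gluing isometry in between --- so one must check that a large projection distance in the target line $\ell_{\tilde M_w}$ genuinely reflects a large separation in the \emph{hyperbolic} factor of the preceding vertex space, and then feed that back into the $\delta$-hyperbolic geometry of $\hyp^2$. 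I would handle this by first proving the coarse contraction statement purely inside one vertex space: there is a uniform $\theta_0$ such that if $P,P'$ are boundary planes of $\tilde M_u$ whose projections to $Q$ are more than $\theta_0$ apart, then all further projections (to planes "behind" $Q$) of $P$ and $P'$ coarsely agree. This follows from thin triangles in $\hyp^2$ together with the uniform lower bound on distances between distinct boundary geodesics of the $H_v$. Then (P1) and (P2) follow by tracking which vertex of the tree geodesic is "responsible" for a large $d^\pi_C$, exactly as in the BBF curve-complex setup, using that $\tilde\Gamma$ is a tree so there is no ambiguity about the relevant geodesic.
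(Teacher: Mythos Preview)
Your overall framework --- exploit the tree structure of $\tilde\Gamma$ to reduce to a local problem in a single vertex space, then use the hyperbolic geometry of the factors $H_v\subset\hyp^2$ --- is the same as the paper's. However, your verification of the finiteness axiom has a genuine gap, and incidentally you have swapped the labels: in this paper (P1) is the Behrstock-type inequality and (P2) is finiteness.

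For finiteness (the paper's (P2)) you write that since $[v_A,v_B]$ is a finite path, ``only finitely many vertices $v_C$ can occur.'' This is false as stated: the tree $\tilde\Gamma$ has infinite valence (each lift of a piece is glued to infinitely many neighboring lifts), so for every vertex $u'$ on $[v_A,v_B]$ there are \emph{infinitely} many vertices $u$ at distance one from $u'$, each giving a candidate domain $C$ with $d^\pi_C(A,B)$ potentially nonzero. The real work is to show that among these infinitely many $C$ only finitely many satisfy $d^\pi_C(A,B)>\theta$. The paper does this by translating into the hyperbolic factor $H$ of $\tilde M_{u'}$: writing the relevant boundary planes as $\alpha\times\R$, $\beta\times\R$, $\gamma\times\R$, one has $d^\pi_C(A,B)=d_\gamma(\alpha,\beta)$, and a thin-hexagon argument in $\hyp^2$ (using the uniform lower bound on distances between boundary components of $H$) shows that $d_\gamma(\alpha,\beta)>\theta$ forces $\gamma$ to meet the $\epsilon$-neighborhood of the fixed shortest geodesic $[r,s]$ from $\alpha$ to $\beta$. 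Only finitely many boundary components of $H$ do this. Your sentence ``at each such vertex the projection-distance between two fixed planes of a single vertex space is finite'' does not supply this argument.

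The Behrstock inequality (the paper's (P1)) also needs a sharper case analysis than you sketch. The nontrivial case is when the vertex $u$ of $C$ lies at distance one from $[v,w]$ and its neighbor $u'$ on the geodesic is adjacent to $v$ (say). Then both $d^\pi_C(A,B)$ and $d^\pi_A(B,C)$ reduce to projection distances among three boundary geodesics $\alpha,\beta,\gamma$ of a single $H$, and one must show $d_\gamma(\alpha,\beta)>\theta\Rightarrow d_\alpha(\beta,\gamma)\le\theta$; the paper does this with a thin-pentagon argument. Your ``coarse contraction inside one vertex space'' is in the right spirit, but as stated it concerns projections to planes \emph{behind} $Q$, which is not the configuration that arises here.
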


Before proving the lemma, we  show how it can be used to prove Theorem \ref{noloops}.

\begin{proof}[Proof of Theorem \ref{noloops} using Lemma \ref{axioms}]
For $K$ large enough, the complex $\mathcal{C}(\mathbf{X})=\mathcal{C}_K(\mathbf{X})$ is a quasi-tree by Theorem \ref{thm:bbfqtree} and the lines $\C(A)$ are isometrically embedded in $\C_K(\mathbf{X})$ by Theorem \ref{thm:isomembed}.

The group $\pi_1(M)$ acts on the set $\mathbf{X}$ and permutes the associated lines $\mathcal{C}(A)$ by isometries. Moreover, it is easy to see that $\pi_1(M)$ preserves the projections $\pi_A$ (and hence also the distance functions $d_A^\pi$). Hence we obtain an action of $\pi_1(M)$ on $\mathcal{C}(\mathbf{X})$ by isometries. The elements $a$ and $b$ both fix $\mathcal{C}(\tilde{X}_0)$. The element $a$ fixes it pointwise whereas $b$ acts on it by translation. Since $\mathcal{C}(\tilde{X}_0)$ is isometrically embedded, this proves that $a$ is elliptic and $b$ is loxodromic, as desired.

By reversing the roles of $X$ and $Y$, we obtain a complex $\C(\mathbf{Y})$ on which $a$ is loxodromic and $b$ is elliptic.
\end{proof}

It now remains only to prove Lemma \ref{axioms}.
\begin{proof}[Proof of Lemma \ref{axioms}]
We will choose $\theta$ during the course of the proof.

Since $\pi_B(A)$ is a single point if $A,B\in \mathbf{X}$ are distinct, (P0) is trivially satisfied for any $\theta>0$.

We check (P2) first and then (P1). Consider two distinct domains $A,B\in \mathbf{X}$ corresponding to vertices $v$ and $w$, respectively, in the underlying tree $\tilde{\Gamma}$. Consider a third domain $C\in \mathbf{X}$ corresponding to a vertex $u$ of $\tilde{\Gamma}$. Let $[v,w]$ be the geodesic from $v$ to $w$ in $\tilde{\Gamma}$. Let $[u,u']$ be the unique geodesic from $u$ to $[v,w]$, where $u'$ is a vertex of $[v,w]$. If $d(u,u')\geq 2$ then we see immediately that $\pi_C(A)=\pi_C(B)$ so $d^\pi_C(A,B)=0$. Otherwise there are two cases:
\begin{enumerate}[(i)]
\item $u$ is a vertex of $[v,w]$, or
\item $u$ is joined by an edge to a vertex $u'$ of $[v,w]$.
\end{enumerate}

\begin{figure}[h]
\centering

\begin{tabular}{c c}
\centering
\def\svgwidth{0.4\textwidth}
\begingroup%
  \makeatletter%
  \providecommand\color[2][]{%
    \errmessage{(Inkscape) Color is used for the text in Inkscape, but the package 'color.sty' is not loaded}%
    \renewcommand\color[2][]{}%
  }%
  \providecommand\transparent[1]{%
    \errmessage{(Inkscape) Transparency is used (non-zero) for the text in Inkscape, but the package 'transparent.sty' is not loaded}%
    \renewcommand\transparent[1]{}%
  }%
  \providecommand\rotatebox[2]{#2}%
  \newcommand*\fsize{\dimexpr\f@size pt\relax}%
  \newcommand*\lineheight[1]{\fontsize{\fsize}{#1\fsize}\selectfont}%
  \ifx\svgwidth\undefined%
    \setlength{\unitlength}{694.10453256bp}%
    \ifx\svgscale\undefined%
      \relax%
    \else%
      \setlength{\unitlength}{\unitlength * \real{\svgscale}}%
    \fi%
  \else%
    \setlength{\unitlength}{\svgwidth}%
  \fi%
  \global\let\svgwidth\undefined%
  \global\let\svgscale\undefined%
  \makeatother%
  \begin{picture}(1,0.05574594)%
    \lineheight{1}%
    \setlength\tabcolsep{0pt}%
    \put(0,0){\includegraphics[width=\unitlength,page=1]{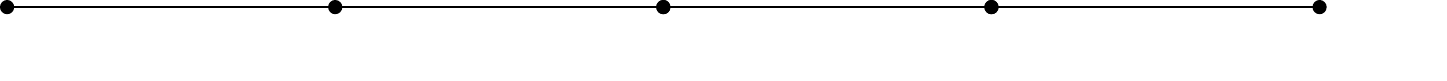}}%
    \put(0.00498484,0.00761865){\color[rgb]{0,0,0}\makebox(0,0)[lt]{\lineheight{1.25}\smash{\begin{tabular}[t]{l}$v$\end{tabular}}}}%
    \put(0.9126291,0.00761865){\color[rgb]{0,0,0}\makebox(0,0)[lt]{\lineheight{1.25}\smash{\begin{tabular}[t]{l}$w$\end{tabular}}}}%
    \put(0.45880678,0.00761859){\color[rgb]{0,0,0}\makebox(0,0)[lt]{\lineheight{1.25}\smash{\begin{tabular}[t]{l}$u$\end{tabular}}}}%
  \end{picture}%
\endgroup%
 &

\centering
\def\svgwidth{0.4\textwidth}
\begingroup%
  \makeatletter%
  \providecommand\color[2][]{%
    \errmessage{(Inkscape) Color is used for the text in Inkscape, but the package 'color.sty' is not loaded}%
    \renewcommand\color[2][]{}%
  }%
  \providecommand\transparent[1]{%
    \errmessage{(Inkscape) Transparency is used (non-zero) for the text in Inkscape, but the package 'transparent.sty' is not loaded}%
    \renewcommand\transparent[1]{}%
  }%
  \providecommand\rotatebox[2]{#2}%
  \newcommand*\fsize{\dimexpr\f@size pt\relax}%
  \newcommand*\lineheight[1]{\fontsize{\fsize}{#1\fsize}\selectfont}%
  \ifx\svgwidth\undefined%
    \setlength{\unitlength}{694.10453256bp}%
    \ifx\svgscale\undefined%
      \relax%
    \else%
      \setlength{\unitlength}{\unitlength * \real{\svgscale}}%
    \fi%
  \else%
    \setlength{\unitlength}{\svgwidth}%
  \fi%
  \global\let\svgwidth\undefined%
  \global\let\svgscale\undefined%
  \makeatother%
  \begin{picture}(1,0.30020545)%
    \lineheight{1}%
    \setlength\tabcolsep{0pt}%
    \put(0,0){\includegraphics[width=\unitlength,page=1]{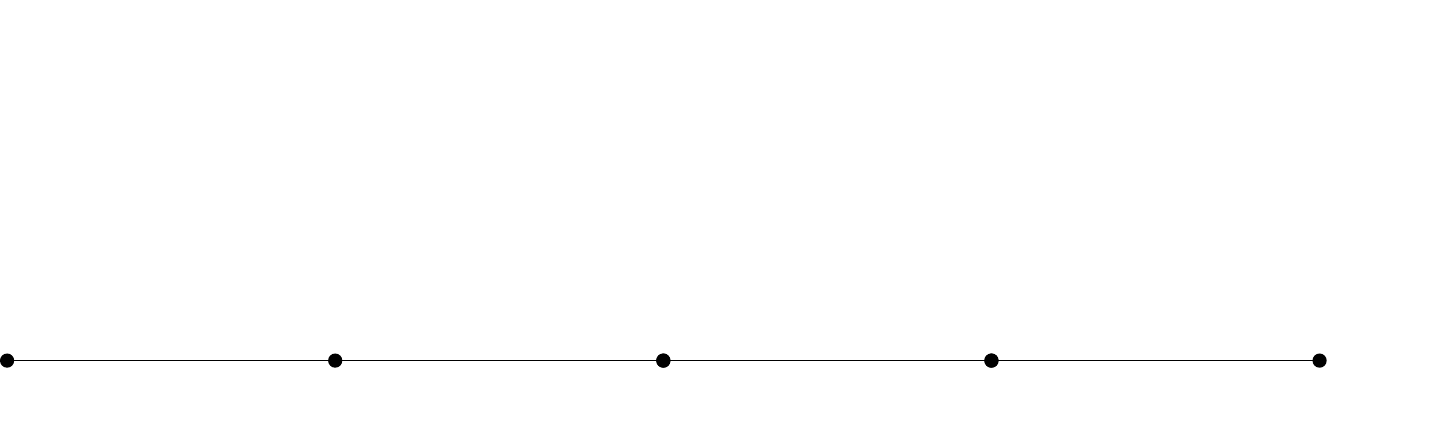}}%
    \put(0.00498484,0.00761858){\color[rgb]{0,0,0}\makebox(0,0)[lt]{\lineheight{1.25}\smash{\begin{tabular}[t]{l}$v$\end{tabular}}}}%
    \put(0.9126291,0.00761858){\color[rgb]{0,0,0}\makebox(0,0)[lt]{\lineheight{1.25}\smash{\begin{tabular}[t]{l}$w$\end{tabular}}}}%
    \put(0,0){\includegraphics[width=\unitlength,page=2]{largedist2.pdf}}%
    \put(0.48041744,0.26694542){\color[rgb]{0,0,0}\makebox(0,0)[lt]{\lineheight{1.25}\smash{\begin{tabular}[t]{l}$u$\end{tabular}}}}%
    \put(0.45880678,0.00761846){\color[rgb]{0,0,0}\makebox(0,0)[lt]{\lineheight{1.25}\smash{\begin{tabular}[t]{l}$u'$\end{tabular}}}}%
  \end{picture}%
\endgroup%

\end{tabular}

\caption{The two possible cases of domains $C$ (corresponding to the vertex $u$) with a large projection distance between $A$ and $B$ (corresponding to $v$ and $w$, respectively).}
\end{figure}

Clearly there are only finitely many domains $C\in \mathbf{X}$ corresponding to vertices of type (i). We claim that if $\theta$ is large enough then there are also finitely many domains $C$ with $d^\pi_C(A,B)>\theta$ corresponding to vertices of type (ii). Choose $\epsilon$ large enough that geodesic hexagons in $\hyp^2$ are $\epsilon$--thin. Also choose  $R>0$ small enough such that no two boundary components of the base $\tilde{S}$ of any vertex space $\tilde{S}\times \R$ of $\tilde{M}$ are $R$--close. Given any number $r>0$ there exists a number $\eta(r)$ such that if two geodesics in $\hyp^2$ are $2\epsilon$--close along segments of length greater than $\eta(r)$ then they are in fact $r$--close at some points. Hence, we see that no two boundary components of $\tilde{S}$ are $2\epsilon$--close along segments of length greater than $\eta(R)$. Set $\eta=\eta(R)$ and $\theta=6\epsilon+2\eta$. We claim that there are finitely many domains $C$ corresponding to vertices $u$ of type (ii) with $d^\pi_C(A,B)>\theta$.

If $C$ corresponds to a vertex $u$ of type (ii) (in other words $C=\tilde{M}_u$), the vertex space $\tilde{M}_{u'}$ of $\tilde{M}$ contains three boundary planes $P,Q,$ and $R$ such that \[\pi_C(A)=p_{\tilde{M}_u}(\rho_R(P)) \text{ and } \pi_C(B)=p_{\tilde{M}_u}(\rho_R(Q))\] (where $C=\tilde{M}_u$ is glued to $\tilde{M}_{u'}$ along $R$). Parametrizing $\tilde{M}_{u'}$ as $H\times \R$ with $H$ a closed convex subset of $\hyp^2$, there are boundary components $\alpha,\beta,$ and $\gamma$ of $H$ with $P=\alpha\times \R$, $Q=\beta\times \R$, and $R=\gamma\times \R$. We see immediately that \[d^\pi_C(A,B)=d_\gamma(\alpha,\beta)\] where $d_\gamma(\alpha,\beta)$ denotes the distance between the closest point on $\gamma$ to $\alpha$ and the closest point on $\gamma$ to $\beta$. There are finitely many possibilities for the vertex $u'\in [v,w]$. Thus to show that there are finitely many domains $C$ of type (ii) with $d^\pi_C(A,B)>\theta$, it suffices to show that there are finitely many boundary components $\gamma$ of $H$ with $d_\gamma(\alpha,\beta)>\theta$ (note that $\alpha$ and $\beta$ are uniquely determined by $A$ and $B$).

Let $[p,p']$ be the shortest geodesic from $\alpha$ to $\gamma$ (with $p\in \alpha$ and $p'\in \gamma$) and let $[q,q']$ be the shortest geodesic from $\beta$ to $\gamma$ (with $q\in \beta$ and $q'\in \gamma$). Also let $[r,s]$ be the shortest geodesic from $\alpha$ to $\beta$ (with $r\in \alpha$ and $s\in \beta$). Orient $[p,p']$ from $p$ to $p'$, $[q,q']$ from $q$ to $q'$, and $[p',q']\subset \gamma$ from $p'$ to $q'$ (see Figure \ref{fig:P1P2}). We claim that $d_\gamma(\alpha,\beta)>\theta$ implies that $\gamma$ contains a point which is $\epsilon$-close to $[r,s]$.  

\begin{figure}
\def\svgwidth{2.5in}  
  \centering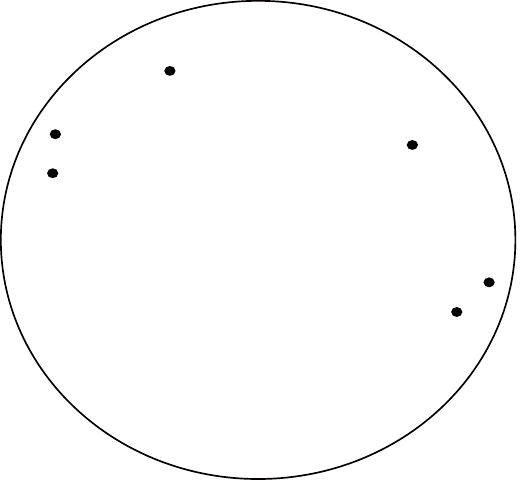 \\
	\caption{Proofs of Axioms (P1) and (P2).} 
	\label{fig:P1P2}
\end{figure}

By the definition of $[p,p']$ as the shortest geodesic from $\alpha$ to $\gamma$, the only points on $[p,p']$ which are $\epsilon$-close to $[p',q']\subset \gamma$ are the points in the final segment of length $\epsilon$. A similar statement holds for $[q,q']$. By the triangle inequality, the only points on $[p',q']$ which may be $\epsilon$--close to $[p,p']$ are the points in the \textit{initial} subsegment of length $2\epsilon$. Similarly, the only points on $[p',q']$ which may be $\epsilon$--close to $[q,q']$ are the points in the \textit{final} subsegment of length $2\epsilon$. Therefore any point in the middle segment of $[p',q']$ of length $(d(p',q')-4\epsilon)$ is $\epsilon$--close to a point of $[r,p]\cup [r,s]\cup [s,q]$. Note that $[r,p]\subset \alpha$ and $[s,q]\subset \beta$. Hence, only a segment of $[p',q']$ of length at most $\eta$ may be $\epsilon$--close to $[r,p]$ and only a segment of $[p',q']$ of length at most $\eta$ may be $\epsilon$--close to $[s,q]$. Therefore if $d(p',q')=d_\gamma(\alpha,\beta)>\theta=4\epsilon+2\eta$ then $[p',q']$ must have a point at distance at most $\epsilon$ from $[r,s]$, as claimed.

Finally, note that there are finitely many boundary components of $H$ which meet the $\epsilon$--neighborhood of $[r,s]$. This proves that there are finitely many boundary components $\gamma$ of $H$ with $d_\gamma(\alpha,\beta)>\theta$. As noted before, there are finitely many choices for the vertex $u'$, so this proves that there are finitely many domains $C$ with $d^\pi_C(A,B)>\theta$.

Finally, we prove Axiom (P1). If $d^\pi_C(A,B)>0$ then the vertex $u$ corresponding to $C$ must have one of the types (i) or (ii) above. In other words, if $v, w,$ and $u$ are the vertices for $A, B,$ and $C$ respectively, and $[v,w]$ is the geodesic between $v$ and $w$ in $\tilde{\Gamma}$, then either (i) u lies on $[v,w]$ or (ii) $u$ is joined by an edge to a vertex $u'\in [v,w]$. In case (i) we have \[d^\pi_A(B,C)=d^\pi_B(A,C)=0\] so that (P1) is trivially satisfied.

In case (ii) we again have that \[d^\pi_A(B,C)=d^\pi_B(A,C)=0\] unless $u'$ is joined by an edge to $v$ or to $w$. Suppose for instance that $u'$ is joined by an edge to $v$. Then parametrizing $\tilde{M}_{u'}$ as $H\times \R$ with $H$ a closed convex subset of $\hyp^2$, we see that there are boundary components $\alpha,\beta,\gamma$ of $H$ with \[d^\pi_C(A,B)=d_\gamma(\alpha,\beta) \text{ and } d^\pi_A(B,C)=d_\alpha(\beta,\gamma).\] We claim that if $d_\gamma(\alpha,\beta)>\theta$ then $d_\alpha(\beta,\gamma)\leq \theta$.

By the proof of Axiom (P2), if $d_\gamma(\alpha,\beta)>\theta$, then we have the following property. Let $[r,s]$ be the shortest geodesic from $\alpha$ to $\beta$, $[p,p']$ be the shortest geodesic from $\alpha$ to $\gamma$, and $[q,q']$ the shortest geodesic from $\beta$ to $\gamma$. Then we have that $[p',q']\subset \gamma$ has a point which is $\epsilon$--close to $[r,s]$.

Now assume that $d_\alpha(\beta,\gamma)>\theta$. Consider points $e\in [p',q']$ and $f\in [r,s]$ such that $d(e,f)\leq \epsilon$ (see Figure \ref{fig:P1P2}). Then the geodesic pentagon \[[r,p]\cup [p,p']\cup [p',e]\cup [e,f]\cup [f,r]\] is $\epsilon$--thin. Orient each of $[r,p]\subset \alpha$, $[r,f]\subset [r,s]$, and $[p,p']$ to point from the first point in the brackets to the second. By the definition of $[r,s]$ as the shortest geodesic from $\alpha$ to $\beta$, only points on the initial subsegment of $[r,f]$ of length $\epsilon$ can be $\epsilon$--close to $\alpha$. Similarly, only points on the initial subsegment of $[p,p']$ of length $\epsilon$ can be $\epsilon$--close to $\alpha$. By the triangle inequality, only points in the initial subsegment of $[r,p]$ of length $2\epsilon$ can be $\epsilon$--close to $[r,f]$ and only points on the final subsegment of $[r,p]$ of length $2\epsilon$ can be $\epsilon$--close to $[p,p']$. Thus, points on the middle subsegment of $[r,p]$ of length $(d(r,p)-4\epsilon)$ are $\epsilon$--close to $[p',e]\cup [e,f]$. Since $[e,f]$ has length $\leq \epsilon$, any point on the middle subsegment of $[r,p]$ of length $(d(r,p)-4\epsilon)$ is $2\epsilon$--close to $[p',e]\subset \gamma$. Since $d_\alpha(\beta,\gamma)=d(r,p)>\theta=4\epsilon+2\eta$, this implies that $\alpha$ is $2\epsilon$--close to $\gamma$ along a segment of length at least $2\eta$. This contradicts the definition of $\eta$. Thus we must have $d_\alpha(\beta,\gamma)\leq \theta$.

Therefore \[d^\pi_C(A,B)> \theta \text{ implies } d^\pi_A(B,C)\leq \theta \text{ and } d^\pi_B(A,C)\leq \theta,\] as desired.
\end{proof}

\subsection{The case that $\Gamma$ contains loops}
\label{sec:loops}

Now we assume that $\Gamma$ contains loops but $M$ contains at least two \textit{distinct} pieces $X$ and $Y$ which are glued together (recall that we do not cover the case that $\Gamma$ has only one vertex). Thus $\Gamma$ contains an edge which is not a loop, between the vertex corresponding to $X$ and the vertex corresponding to $Y$. We again find elements $a$ and $b$ of $\pi_1(M)$, corresponding to orthogonal loops in the torus along which $X$ and $Y$ are glued such that $a$ and $b$ commute and $a$ and $b$ both fix a pair of lifts $\tilde{X}_0$ and $\tilde{Y}_0$ which are glued along a Euclidean plane in $\tilde{M}$. We assume that $a$ and $b$ act on $\tilde{X}_0$ and $\tilde{Y}_0$ in the same way as in the last section. We again claim that there is a hyperbolic space $\mathcal{L}$ on which $\pi_1(M)$ acts so that $a$ is elliptic while $b$ is loxodromic. However, the construction in this case is more complicated, and the space $\mathcal{L}$ will actually be a quasi-line.

If there is no loop in $\Gamma$ based at the vertex corresponding to $X$ then the techniques in the previous section apply, so that we obtain a hyperbolic space acted on by $\pi_1(M)$ with $a$ elliptic and $b$ loxodromic. Hence we suppose here without loss of generality that there is at least one loop in $\Gamma$ based at the vertex corresponding to $X$. In this case, taking $\mathbf{X}$ to be the set of \textit{all} lifts of the vertex space $X$ in $M$ leads to pairs of domains between which no projection is defined. Thus we instead use a coloring of the vertices of $\tilde{\Gamma}$ by two colors, black and white. As in the usual definition of a coloring, we require that if two vertices are joined by an edge, then they have different colors.

If $A$ and $B$ are distinct lifts of $X$ to $\tilde{M}$ and both correspond to \textit{black} vertices of $\tilde{\Gamma}$ (that is $A=\tilde{M}_v$ and $B=\tilde{M}_w$ and $v$ and $w$ are both black) then the geodesic $[v,w]\subset \tilde{\Gamma}$ contains at least one vertex in its interior. Thus there is a well-defined projection from $\ell_A$ to $\ell_B$ as before, with image a point. Hence we take our set of domains to be \[\mathbf{X}=\{A: A \text{ is a lift of } X \text{ and } A=\tilde{M}_v \text{ such that } v\in \tilde{\Gamma}^0 \text{ is black}\}.\] We refer to the domains $A\in{\bf X}$ as simply \textit{black} lifts of $X$. As in the previous subsection, for any sufficiently large $K$ we may define a quasi-tree $\mathcal{C}_K(\mathbf{X})$.

Now, $\pi_1(M)$ acts by permutation on the set of colors of vertices and thus an index two (normal) subgroup $N$ of $\pi_1(M)$ preserves the coloring of $\tilde{\Gamma}$. We have an action of $N$ on $\mathcal{C}_K(\mathbf{X})$, for a constant  $K$ which will be chosen in the course of Lemma \ref{qmvalues}. Moreover, both $a$ and $b$ lie in $N$. We may choose the coloring of $\Gamma$ such that the chosen lift $\tilde{X}_0$ corresponds to a black vertex. Then $a$ and $b$ both fix the domain $\tilde{X}_0$. We have that $a$ acts on $\mathcal{C}(\tilde{X}_0)=\ell_{\tilde{X}_0}$ by fixing every point and $b$ acts on $\mathcal{C}(\tilde{X}_0)$ as a translation. Hence $a$ is elliptic and $b$ is loxodromic in the action of $N$ on $\mathcal{C}(\mathbf{X})$. Note that the action of $N$ cannot be extended to an action of $\pi_1(M)$ since elements of $\pi_1(M)$ send domains in $\mathbf{X}$ to domains which have no projections defined to the domains of $\mathbf{X}$. We wish to construct a homogeneous quasimorphism $q_0:N\to \R$ such that $q_0(b)\neq 0$ and $q_0(a)=0$ by applying  Proposition \ref{prop:qmconstruction}.
%

\begin{lem}
The element $b$ is WWPD${}^+$ in the action of $\pi_1(M)$ on $\C_K(\mathbf{X})$.
\end{lem}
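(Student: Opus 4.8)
The plan is to verify the two conditions in the definition of WWPD${}^+$ directly, using the quasi-tree of metric spaces structure of $\C_K(\mathbf{X})$ and the geometric meaning of the element $b$. Recall that $b$ acts on the domain $\tilde{X}_0\in\mathbf{X}$ as a translation of the line $\ell_{\tilde{X}_0}=\C(\tilde{X}_0)$ and fixes every other coordinate direction. Since $\C(\tilde{X}_0)$ is isometrically embedded in $\C_K(\mathbf{X})$ by Theorem \ref{thm:isomembed} and $\C_K(\mathbf{X})$ is a quasi-tree, $b$ is loxodromic with axis (coarsely) $\C(\tilde{X}_0)$, and its fixed points $b^\pm\in\partial\C_K(\mathbf{X})$ are the two endpoints of this line. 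The key structural fact I would extract from the quasi-tree of metric spaces machinery is that the stabilizer of the pair $\{b^+,b^-\}$ in $N$ coarsely stabilizes the domain $\tilde{X}_0$: any element moving $\C(\tilde{X}_0)$ to a different domain $A\in\mathbf{X}$ would move $b^\pm$ into $\partial$ of a different, "transverse" subtree, contradicting fixing the pair. This is where one uses that distinct domains project to bounded sets in one another (axioms (P0)--(P2)) and the tree-like structure of $\mathcal P_K(\mathbf{X})$.

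First I would prove the WWPD property. Suppose $\{h_n\}\subset N$ satisfies $h_nb^+\to b^+$ and $h_nb^-\to b^-$. Because $\C_K(\mathbf{X})$ is a quasi-tree and $\C(\tilde{X}_0)$ is a bi-infinite geodesic-like subspace whose endpoints are $b^\pm$, for $n$ large the quasi-axis $h_n\C(\tilde{X}_0)$ must run alongside $\C(\tilde{X}_0)$ over a long segment; using the bottleneck property and the fact that any two domains in $\mathbf{X}$ have uniformly bounded projection to each other, a long fellow-travelling forces $h_n\tilde{X}_0=\tilde{X}_0$ (since the only domain fellow-travelling $\tilde{X}_0$ over an arbitrarily long segment is $\tilde{X}_0$ itself). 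So $h_n$ stabilizes $\tilde{X}_0\in\mathbf{X}$, hence preserves $\ell_{\tilde{X}_0}$; and the "fellow-travelling with matching orientation" (which is what $h_nb^\pm\to b^\pm$, not just $h_n\{b^+,b^-\}\to\{b^+,b^-\}$, gives us) forces $h_n$ to act on $\ell_{\tilde{X}_0}$ by an orientation-preserving isometry, i.e.\ a translation, which fixes both endpoints. Thus $h_nb^+=b^+$ and $h_nb^-=b^-$ for all large $n$, as required.

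Next I would prove the WWPD${}^+$ upgrade. Suppose $h\in N$ fixes $\{b^+,b^-\}$ as a set. By the same coarse-stabilizer argument, $h$ stabilizes the domain $\tilde{X}_0$ and hence acts on $\ell_{\tilde{X}_0}=\C(\tilde{X}_0)$ by an isometry of the real line, which is either a translation or a reflection. I must rule out the reflection case — equivalently, rule out $hb^+=b^-$. Here I would use the explicit description of how $\pi_1(M)$, and in particular the subgroup $N$, acts on the lifts: an element stabilizing the lift $\tilde{X}_0\cong \tilde S\times\R$ acts as a product of an isometry of $\tilde S$ (an element of $\pi_1(S)$) and an isometry of the $\R$-factor, and \emph{orientation-preserving} homeomorphisms of $M$ induce \emph{orientation-preserving} isometries of the $\R$-direction — the line $\ell_{\tilde{X}_0}$ is exactly this fiber $\R$-direction, and deck transformations of $\tilde M$ preserve its orientation. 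Hence $h$ acts on $\ell_{\tilde{X}_0}$ by a translation and fixes $b^+$ and $b^-$ individually. (Alternatively, note that the fiber direction generator $a\in\pi_1(M)$ is central in the vertex group and its image under any homomorphism to $\Z$ detecting the fiber class is preserved, so no element of $\pi_1(M)$ can invert it; this forbids the reflection.) This gives WWPD${}^+$.

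The main obstacle I anticipate is the coarse-stabilizer claim: showing that an element of $N$ fixing (or asymptotically fixing) the endpoints $b^\pm$ must actually stabilize the domain $\tilde{X}_0$. This requires being careful with the quasi-tree geometry of $\C_K(\mathbf{X})$ — translating "the two boundary points are fixed" into "the quasi-axis fellow-travels $\C(\tilde{X}_0)$ for arbitrarily long" and then into "the underlying vertices in $\mathcal P_K(\mathbf{X})$ agree" — and leans on the finiteness axiom (P2) (only finitely many domains have large projection between any two) together with Manning's bottleneck criterion as recalled before Theorem \ref{thm:bbfqtree}. Everything else (translation-vs-reflection, orientation of the fiber) is then a short argument using the product structure of the vertex spaces of $\tilde M$.
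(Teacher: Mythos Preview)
Your approach is essentially the paper's: reduce WWPD to the fact that distinct domains in $\mathbf{X}$ have uniformly bounded nearest-point projection to one another in $\C_K(\mathbf{X})$ (Theorem \ref{thm:isomembed}), and reduce the ${}^+$ upgrade to the observation that the stabilizer of $\tilde{X}_0$ in $N$ is conjugate to $\pi_1(S)\times\Z$, which acts on $\ell_{\tilde{X}_0}$ through its $\Z$-factor by translations and so cannot swap the ends. The paper's WWPD argument is a touch more direct than yours: rather than chasing sequences $h_n$ and fellow-travelling, it simply notes that any conjugate of $b$ has axis $\ell_{h\tilde{X}_0}$ for some $h$, and if $h\tilde{X}_0\neq\tilde{X}_0$ then the projection of this axis to $\ell_{\tilde{X}_0}$ has uniformly bounded diameter, which is one of the equivalent characterizations of WWPD cited from \cite{wwpd}. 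One caution on your ${}^+$ argument: the claim that orientation-preserving deck transformations of $\tilde{M}$ necessarily preserve the orientation of the fiber $\R$-direction is not justified as stated (orientation on a 3-manifold does not by itself control orientation on a 1-dimensional quotient); your parenthetical alternative via the product structure of the vertex group is the correct and complete argument, and is exactly what the paper uses.
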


\begin{proof}
The fact that $b$ is WWPD follows  from the quasi-tree of metric spaces machinery. Any conjugate of $b$ has a geodesic axis in $\C_K(\mathbf{X})$. If this axis is not the same as the axis of $b$, which is $\ell_{\tilde{X}_0}$, then it is equal to $\ell_A$ for some $A\in \mathbf{X}\setminus \{\tilde{X}_0\}$. Then $\pi_{\tilde{X}_0}(A)$ is a point and the closest point projection of $\ell_A$ to $\ell_{\tilde{X}_0}$ in $\C_K(\mathbf{X})$ is a uniformly bounded diameter set at a uniformly bounded distance from $\pi_{\tilde{X}_0}(A)$ (see Theorem \ref{thm:isomembed}). Thus the projection of a translate of the axis of $b$ to the axis of $b$ has uniformly bounded diameter.

To show that $b$ is WWPD${}^+$, we must show that no element of $N$ interchanges the endpoints of $\ell_{\tilde{X}_0}$. If an element of $N$ fixes $\ell_{\tilde{X}_0}$ then it also fixes the domain $\tilde{X}_0$. The stabilizer of $\tilde{X}_0$ is conjugate to $\pi_1(S)\times \Z$ where $X=S\times S^1$. Clearly, no element of this stabilizer interchanges the endpoints of $\ell_{\tilde{X}_0}$.
\end{proof}

Hence by Proposition \ref{prop:qmconstruction} there is a homogeneous quasimorphism $q_0\colon N\to \R$ with $q_0(a)=0$ and $q_0(b)\neq 0$. We will use this to define a quasimorphism $\pi_1(M)\to\R$ by using a technique from the proof of \cite[Lemma~7.2]{symmetric}.  In order to apply this technique, we must first modify $q_0$ to a quasimorphism $q_0'\colon N\to \R$ as follows. Choose $h$ to be a representative of the nontrivial coset of $N$ in $\pi_1(M)$ and define \[q_0'(g)=q_0(g)+q_0(h g h^{-1}).\] Since $N$ is normal, $q_0'$ is indeed a map $N\to \R$. Moreover, $q_0'$ extends to a homogeneous quasimorphism $q\colon \pi_1(M)\to \R$ defined by \[q(g)=\frac{1}{2}q_0'(g^2) \text{ for any } g\in \pi_1(M).\] (In other words, $q|_N=q_0'$.) The fact that $q$ is a quasimorphism is given in the proof of \cite[Lemma~7.2]{symmetric}, and it is straightforward to check that it is homogeneous. Our main tool in this section is the following:

\begin{lem}
\label{qmvalues}
For $K$ large enough, $q(a)=0$ and $q(b)\neq 0$.
\end{lem}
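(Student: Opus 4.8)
The plan is to transfer the values of the quasimorphism $q_0$ along $\ell_{\tilde{X}_0}$ back to the relevant elements, and to show that $q$ inherits the vanishing/non-vanishing behavior of $q_0$. First I would record that $q_0(a)=0$ and $q_0(b)\neq 0$ by Proposition \ref{prop:qmconstruction}, since $a$ acts elliptically and $b$ acts loxodromically on $\mathcal{C}_K(\mathbf{X})$ (and $b$ is WWPD${}^+$ by the preceding lemma). The task is then to understand $q_0'(g)=q_0(g)+q_0(hgh^{-1})$ and hence $q(g)=\frac{1}{2}q_0'(g^2)$ at $g=a$ and $g=b$.

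For $q(a)=0$: since $q_0$ is homogeneous, $q_0(a^2)=2q_0(a)=0$, so it remains to show $q_0(ha^2h^{-1})=0$, equivalently $q_0(hah^{-1})=0$. The element $a$ is a Dehn twist type element (it acts on $\tilde{X}_0\cong H_0\times\R$ as $\phi\times\mathrm{id}$, where $\phi$ is a loxodromic isometry of $H_0\subset\hyp^2$ with axis $\alpha_0$), and such an element is elliptic on $\mathcal{C}_K(\mathbf{X})$ because it fixes the line $\ell_{\tilde{X}_0}=\mathcal{C}(\tilde{X}_0)$ pointwise; moreover every conjugate $hah^{-1}$ fixes the corresponding line $\ell_{h\tilde{X}_0}$ pointwise, so it too acts elliptically (as long as $h\tilde{X}_0\in\mathbf{X}$; if $h$ does not preserve the coloring then $hah^{-1}$ still acts as an elliptic isometry of $\mathcal{C}_K(\mathbf{X})$ because it has a bounded orbit — it fixes a point of $\mathcal{C}_K(\mathbf{X})$ coming from $\ell_{h\tilde{X}_0}$, which is isometrically embedded by Theorem \ref{thm:isomembed}). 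Then Proposition \ref{prop:qmconstruction}(2) gives $q_0(hah^{-1})=0$. Hence $q_0'(a^2)=0$ and $q(a)=0$.

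For $q(b)\neq 0$: we have $q_0(b^2)=2q_0(b)\neq 0$, and $q(b)=\frac{1}{2}q_0'(b^2)=\frac{1}{2}\bigl(q_0(b^2)+q_0(hb^2h^{-1})\bigr)=q_0(b)+q_0(hbh^{-1})$ by homogeneity. So I must rule out the coincidence $q_0(hbh^{-1})=-q_0(b)$. The key point is the following \textbf{sign control}: the quasimorphism $q_0$ produced by Proposition \ref{prop:qmconstruction} (via the Brooks/Bestvina--Fujiwara construction applied to the WWPD${}^+$ loxodromic $b$) does not change sign under conjugation — this is precisely where WWPD${}^+$ (as opposed to WWPD) is needed, and the statement is essentially \cite[Proposition~4.3]{scl} / \cite[Corollary~3.2]{scl}: for any $h$, $q_0(hbh^{-1})$ has the same sign as $q_0(b)$ (it is $q_0(b)$ if $h$ preserves the orientation of the axis direction, and it also equals $q_0(b)$ rather than $-q_0(b)$ because WWPD${}^+$ forbids any group element from flipping the two endpoints of the axis of $b$, as was checked in the previous lemma for elements stabilizing $\ell_{\tilde{X}_0}$). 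Therefore $q_0(hbh^{-1})$ and $q_0(b)$ are both nonzero of the same sign, so their sum is nonzero, giving $q(b)\neq 0$. I expect the main obstacle to be this last step: pinning down the precise sign-invariance property of the well-behaved Brooks quasimorphism under arbitrary conjugation, and arguing carefully that $hbh^{-1}$ is genuinely loxodromic (not parabolic) on $\mathcal{C}_K(\mathbf{X})$ with an axis on which the WWPD${}^+$ dynamics apply — this requires quoting the relevant statements from \cite{scl} and \cite{wwpd} rather than any new computation, and choosing $K$ large enough that all the quasi-tree-of-metric-spaces estimates (Theorems \ref{thm:isomembed} and \ref{thm:bbfqtree}) hold simultaneously with the WWPD${}^+$ conclusion of the preceding lemma.
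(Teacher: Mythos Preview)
Your argument for $q(a)=0$ has a genuine gap. You claim that $hah^{-1}$ is elliptic on $\mathcal{C}_K(\mathbf{X})$ because it fixes $\ell_{h\tilde{X}_0}$ pointwise and this line is ``isometrically embedded by Theorem \ref{thm:isomembed}.'' But $h$ is chosen precisely to represent the nontrivial coset of $N$, so it \emph{swaps} the two colors: $h\tilde{X}_0$ is a \emph{white} lift, hence $h\tilde{X}_0\notin\mathbf{X}$, and $\ell_{h\tilde{X}_0}$ is simply not a subspace of $\mathcal{C}_K(\mathbf{X})$. Worse, $hah^{-1}$ acts on $W=h\tilde{X}_0\cong H_1\times\R$ as a loxodromic in the base direction, so it genuinely permutes the black domains $B_1,B_2,\ldots$ adjacent to $W$ without fixing any of them. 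There is therefore no obvious fixed point in $\mathcal{C}_K(\mathbf{X})$, and ellipticity of $hah^{-1}$ is exactly the nontrivial content of the lemma. The paper handles this by first proving a uniform bound on the projection distances $d^\pi_C(ha^mh^{-1}B_1,\,ha^nh^{-1}B_1)$ (Lemma \ref{lem:distbound}), then choosing $K$ larger than this bound so that all the lines $\mathcal{C}(ha^kh^{-1}B_1)$ are pairwise joined by edges in $\mathcal{C}_K(\mathbf{X})$, and finally reading off an explicit orbit bound $d(P,ha^kh^{-1}P)\leq 2K+L$ for a suitable basepoint $P$.

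Your treatment of $q(b)\neq 0$ also goes astray, though less seriously. You try to show $q_0(hbh^{-1})$ and $q_0(b)$ have the same sign via a WWPD${}^+$ sign-control argument, and you explicitly propose to ``argue carefully that $hbh^{-1}$ is genuinely loxodromic.'' In fact $hbh^{-1}$ is \emph{elliptic} on $\mathcal{C}_K(\mathbf{X})$: it acts on $W$ as a translation in the fiber (vertical) direction, hence fixes each adjacent black domain $B_i$, and on each $\ell_{B_i}$ it acts trivially (the vertical direction of $W$ becomes a base direction of $B_i$ under the flip gluing). Thus $q_0(hbh^{-1})=0$ directly by Proposition \ref{prop:qmconstruction}(2), and $q(b)=q_0(b)\neq 0$ with no sign analysis needed.
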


Before giving the proof of this lemma, we will show how it proves our main result. 
\begin{thm}
\label{loops}
The group $\pi_1(M)$ admits no largest action.
\end{thm}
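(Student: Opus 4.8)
The plan is to deduce Theorem \ref{loops} from Lemma \ref{qmvalues} in essentially the same way Theorem \ref{noloops} is used together with Lemma \ref{lem:mainlem}, only now passing through quasimorphisms and Lemma \ref{lem:quasiline} because we only have an action of the index-two subgroup $N$ on $\mathcal C_K(\mathbf X)$ rather than an action of all of $\pi_1(M)$. First I would recall the setup: $a$ and $b$ are the commuting elements coming from orthogonal loops in the gluing torus between the distinct pieces $X$ and $Y$, and they both stabilize the lifts $\tilde X_0$ and $\tilde Y_0$, with $a$ acting trivially on $\ell_{\tilde X_0}$ and $b$ acting by a nontrivial translation (and symmetrically with the roles of $X,Y$ swapped: $b$ acts trivially on $\ell_{\tilde Y_0}$ while $a$ translates).

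The key steps, in order, are as follows. First, by Lemma \ref{qmvalues} there is a homogeneous quasimorphism $q\colon\pi_1(M)\to\R$ with $q(a)=0$ and $q(b)\ne 0$. Applying Lemma \ref{lem:quasiline} to the nonzero homogeneous quasimorphism $q$ produces an action $\pi_1(M)\curvearrowright X'$ on a quasi-line in which an element acts loxodromically if and only if $q$ does not vanish on it; in particular $b$ acts loxodromically on $X'$ and $a$ acts elliptically (indeed, with a fixed point since it lies in the kernel of a homomorphism on a quasi-line, or simply because $q(a)=0$ means $a$ is not loxodromic and actions on quasi-lines have no parabolics). Second, I would run the mirror-image construction: since $\Gamma$ contains the non-loop edge between $X$ and $Y$, if there is no loop based at the vertex for $Y$ then Section \ref{sec:noloops} already gives a hyperbolic space on which $b$ is elliptic and $a$ is loxodromic; otherwise the entire argument of this subsection applies verbatim with $X$ and $Y$ interchanged (building $\mathbf Y$ from black lifts of $Y$, checking $a$ is WWPD${}^+$ in the $N$-action on $\mathcal C_K(\mathbf Y)$, and obtaining via the analogue of Lemma \ref{qmvalues} a homogeneous quasimorphism $q'\colon\pi_1(M)\to\R$ with $q'(b)=0$, $q'(a)\ne 0$). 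Lemma \ref{lem:quasiline} then yields an action $\pi_1(M)\curvearrowright Y'$ on a quasi-line with $a$ loxodromic and $b$ elliptic. Third, I would invoke Lemma \ref{lem:mainlem} with the two commuting elements $a,b$, the action $G\curvearrowright Y'$ (where $a$ is loxodromic, $b$ elliptic) in the role of $G\curvearrowright X$, and the action $G\curvearrowright X'$ (where $b$ is loxodromic) in the role of $G\curvearrowright Y$. The lemma then says there is no hyperbolic $\pi_1(M)\curvearrowright Z$ dominating both, so $\mathcal H(\pi_1(M))$ has no largest element, i.e.\ $\pi_1(M)$ admits no largest action.

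A point I would be careful about is why the quasi-line actions $\pi_1(M)\curvearrowright X'$ and $\pi_1(M)\curvearrowright Y'$ are cobounded elements of $\mathcal H(\pi_1(M))$ rather than merely actions on hyperbolic spaces: this is immediate from the statement of Lemma \ref{lem:quasiline} as used throughout the paper, but it is worth a sentence noting that Lemma \ref{lem:mainlem} does not actually require coboundedness — it only needs the loxodromic/elliptic behavior of $a$ and $b$ — so no coboundedness bookkeeping is needed. I should also explicitly treat the two sub-cases for the mirror construction (loop at the $Y$-vertex or not) so that the symmetric quasimorphism $q'$ is genuinely available in all cases; the only content there is the observation that swapping $X\leftrightarrow Y$ swaps the roles of $a\leftrightarrow b$ and leaves every argument of Sections \ref{sec:noloops}--\ref{sec:loops} intact. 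The main obstacle is not in this deduction at all — the deduction is a short assembly — but in Lemma \ref{qmvalues}, whose proof (that the extended quasimorphism $q$ still detects the translation length of $b$ and still vanishes on $a$ after averaging over the coset of $N$) is the real technical heart and is presumably carried out immediately after this theorem.
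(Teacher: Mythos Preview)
Your proposal is correct and follows essentially the same route as the paper: use Lemma \ref{qmvalues} and Lemma \ref{lem:quasiline} to get a quasi-line action with $a$ elliptic and $b$ loxodromic, reverse the roles of $a$ and $b$ (with the same case split on whether the $Y$-vertex carries a loop) to get the companion action, and conclude with Lemma \ref{lem:mainlem}. Your extra remarks about coboundedness being unnecessary for Lemma \ref{lem:mainlem} and about the two sub-cases in the mirror construction are sound but not essential additions.
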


\begin{proof}[Proof of Theorem \ref{loops} using Lemma \ref{qmvalues}]
By Lemma \ref{lem:quasiline}, the quasimorphism $q$ gives rise to an action of $\pi_1(M)$ on a quasi-line $\mathcal{L}$. In this action, $a$ is elliptic and $b$ is loxodromic. Reversing the roles of $a$ and $b$, we find an action of $\pi_1(M)$ on a quasi-line $\mathcal{L}'$ such that $a$ is loxodromic and $b$ is elliptic.
Applying Lemma \ref{lem:mainlem} completes the proof.
\end{proof}

The remainder of this section is devoted to the proof of Lemma \ref{qmvalues}.  Since $h$ is a representative of the nontrivial coset of $N$, it interchanges the colors of $\tilde{\Gamma}$. Hence $h a h^{-1}$ fixes a lift $W$ of $X$ which corresponds to a vertex $v$ of $\tilde{\Gamma}$ which is \textit{white}. Parametrizing $W=H_1\times \R$, with $H_1$ a closed convex subset of $\hyp^2$, we have that $a$ acts on $W$ as $\phi\times \operatorname{id}$ where $\phi$ is a loxodromic isometry of $H_1$ with axis equal to a boundary component $\beta$ of $H_1$. The lift $W$ is glued to a lift $\tilde{Y}$ of $Y$. The lift $\tilde{Y}$ may be written as $H_2\times \R$ and $W$ and $\tilde{Y}$ are glued along the Euclidean plane $\beta\times \R \subset \partial W$ which is identified with a component $\gamma\times \R$ of $\partial \tilde{Y}$, where $\gamma\subset \partial H_2$.

The element $h a h^{-1}$ permutes the infinitely many lifts of $X$ adjacent to $W$. Call them $B_1,B_2,\ldots$. Note that they are all \textit{black}.

\begin{lem} \label{lem:distbound}
The distances $d^\pi_C(h a^m h^{-1} B_1,h a^n h^{-1} B_1)$ are bounded for any $m,n\in \Z$ and $C$ a black lift of $X$ not contained in $\{h a^m h^{-1} B_1,h a^n h^{-1} B_1\}$.
\end{lem}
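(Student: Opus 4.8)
The plan is to run a case analysis, in the Bass--Serre tree $\tilde\Gamma$, on the position of the vertex $v_C$ carrying $C$ relative to the vertex $w$ carrying $W$ and the vertices $v_A,v_{A'}$ carrying $A:=ha^mh^{-1}B_1$ and $A':=ha^nh^{-1}B_1$. In all configurations but one the projections $\pi_C(A)$ and $\pi_C(A')$ will literally coincide, so that $d^\pi_C(A,A')=0$; in the single remaining configuration I will reduce the projection distance to a distance inside one vertex space and estimate it by plane hyperbolic geometry.

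If $A=A'$ there is nothing to prove, so assume $v_A\neq v_{A'}$. Since $A$ and $A'$ are glued to $W$, both $v_A$ and $v_{A'}$ are adjacent to $w$, hence lie in distinct components of $\tilde\Gamma\setminus\{w\}$. Recall that $\pi_C(A)$ is computed from the last three vertices of the geodesic $[v_A,v_C]$, and similarly for $A'$, and that $v_A,v_{A'},v_C$ are all black, so any two distinct ones are at distance $\geq 2$. If $v_C$ lies in the component of $\tilde\Gamma\setminus\{w\}$ containing $v_A$, then $[v_{A'},v_C]$ runs through $w$, then $v_A$, and from $v_A$ onward coincides with $[v_A,v_C]$; as $[v_A,v_C]$ has length $\geq 2$, the last three vertices of $[v_A,v_C]$ and $[v_{A'},v_C]$ agree, whence $\pi_C(A)=\pi_C(A')$ and $d^\pi_C(A,A')=0$. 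The case $v_C$ in the component of $v_{A'}$ is symmetric. If $v_C$ lies in neither of these two components, then $[v_A,v_C]$ and $[v_{A'},v_C]$ share the terminal subpath $[w,v_C]$; if $d(w,v_C)\geq 2$ the last three vertices again agree, and $d^\pi_C(A,A')=0$. The only surviving possibility is that $v_C$ is adjacent to $w$ and lies in a component of $\tilde\Gamma\setminus\{w\}$ distinct from those of $v_A$ and $v_{A'}$; this uses $C\notin\{A,A'\}$.

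In this remaining case $A$, $A'$ and $C$ are all glued to $W$. Write $W=H_1\times\R$ with $H_1\subset\hyp^2$ closed and convex, as above, so that $hah^{-1}$ acts on $W$ as $\phi\times\operatorname{id}$ with $\phi$ loxodromic on $H_1$ having axis a boundary component $\beta$ of $H_1$; then $ha^kh^{-1}$ acts on $W$ as $\phi^k\times\operatorname{id}$. Let $c$ be the boundary component of $H_1$ along which $B_1$ is glued to $W$ and $\gamma$ the one along which $C$ is glued; then $\beta\neq c$, $\beta\neq\gamma$ and $\gamma\neq\phi^m(c),\phi^n(c)$ (each is a distinct gluing locus, and $\beta$ is the gluing locus of a lift of $Y$, not of $X$). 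Unwinding the definition of the projection together with the ``flip'' form $(x,y)\mapsto(y,x)$ of the gluings, one checks that $\pi_C(ha^kh^{-1}B_1)$ is the point of $\ell_C$ whose position, under the isometry $\ell_C\cong\gamma$ induced by the flip, is the nearest point on $\gamma$ to $\phi^k(c)$. Consequently
\[ d^\pi_C(A,A')\ \leq\ \operatorname{diam}\big(\operatorname{proj}_\gamma(\phi^m(c))\cup\operatorname{proj}_\gamma(\phi^n(c))\big), \]
where $\operatorname{proj}_\gamma$ is nearest-point projection onto the geodesic $\gamma$ in $\hyp^2$.

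To bound the right-hand side uniformly: $\phi$ is an isometry with axis $\beta$, so $d(\phi^k(c),\beta)=d(c,\beta)$ for every $k$, i.e.\ each $\phi^k(c)$ lies in the $d(c,\beta)$--neighborhood of $\beta$. Nearest-point projection onto a complete geodesic of $\hyp^2$ is $1$--Lipschitz, so $\operatorname{proj}_\gamma(\phi^k(c))$ lies in the $d(c,\beta)$--neighborhood of $\operatorname{proj}_\gamma(\beta)$, a set of diameter at most $\operatorname{diam}(\operatorname{proj}_\gamma(\beta))+2\,d(c,\beta)$. Finally $\beta$ and $\gamma$ are distinct boundary components of the base $\tilde S$ of a piece, hence at distance $\geq R$, where $R$ is the constant from the proof of Lemma~\ref{axioms}; it is a standard fact that the nearest-point projection of a geodesic of $\hyp^2$ onto a disjoint geodesic at distance $\geq R$ has diameter bounded by a constant $\kappa=\kappa(R)$. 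Therefore $d^\pi_C(A,A')\leq\kappa+2\,d(c,\beta)$, which is independent of $m$, $n$ and $C$. I expect the only real obstacle here to be organizational: correctly isolating the one nontrivial tree configuration and verifying that in it the projection distance is exactly the $\gamma$--distance between $\phi^m(c)$ and $\phi^n(c)$ under the flip identification; the subsequent geometric input (powers of a loxodromic move a fixed geodesic within a bounded neighborhood of the axis, $1$--Lipschitzness of $\mathrm{CAT}(0)$ projection, and boundedness of the projection between disjoint geodesics of $\hyp^2$ at a definite distance) is routine.
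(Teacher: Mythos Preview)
Your proof is correct, and the tree case analysis reaches the same dichotomy as the paper (projections literally coincide unless $C$ is one of the $B_i$ adjacent to $W$), though the paper organizes it as ``$C\in\{B_1,B_2,\dots\}$ versus not'' rather than by components of $\tilde\Gamma\setminus\{w\}$.

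Where you genuinely differ is in the geometric bound for the adjacent case. The paper argues by \emph{ordering}: for $\gamma=\alpha_i$ not in the $\phi$--orbit of $\alpha_1$, it picks the unique $k$ with $\alpha_i$ between $\phi^k\alpha_1$ and $\phi^{k+1}\alpha_1$, observes that the projections $\pi_{\alpha_i}(\phi^l\alpha_1)$ approach $\pi_{\alpha_i}(\beta)$ monotonically as $l\to\pm\infty$, and hence bounds $d_{\alpha_i}(\phi^m\alpha_1,\phi^n\alpha_1)$ by $d_{\alpha_i}(\phi^k\alpha_1,\phi^{k+1}\alpha_1)$, which in turn is controlled by $d(\alpha_1,\phi\alpha_1)$; a separate subcase handles $\alpha_i=\phi^k\alpha_1$. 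Your argument bypasses all of this by using CAT(0) facts directly: the whole family $\{\phi^k(c)\}_k$ lies in the $d(c,\beta)$--neighborhood of the axis $\beta$, $1$--Lipschitz projection sends this into the $d(c,\beta)$--neighborhood of $\operatorname{proj}_\gamma(\beta)$, and $\operatorname{diam}(\operatorname{proj}_\gamma(\beta))$ is bounded because $\gamma$ and $\beta$ are ultraparallel at distance $\ge R$. This is cleaner and avoids the subcase split; the paper's approach, on the other hand, yields an explicit bound in terms of $d(\alpha_1,\phi\alpha_1)$ and needs no appeal to the constant $R$ or to contraction of projections between disjoint geodesics.
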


\begin{proof}
We first handle the case that $C \in \{B_1,B_2,\ldots\}$. For each $i$, there is a boundary component $\alpha_i$ of $H_1$ such that $B_i$ is glued to $W$ along the boundary plane $\alpha_i\times \R$.

We see that $d^\pi_{B_i}(B_j,B_k)=d_{\alpha_i}(\alpha_j,\alpha_k)$ for all $i,j,k$. In particular, $d^\pi_{B_i}(h a^m h^{-1} B_1,h a^n h^{-1} B_1)=d_{\alpha_i}(h a^m h^{-1} \alpha_1,h a^n h^{-1} \alpha_1)$.

\begin{figure}[h]
\centering

\def\svgwidth{0.5\textwidth}
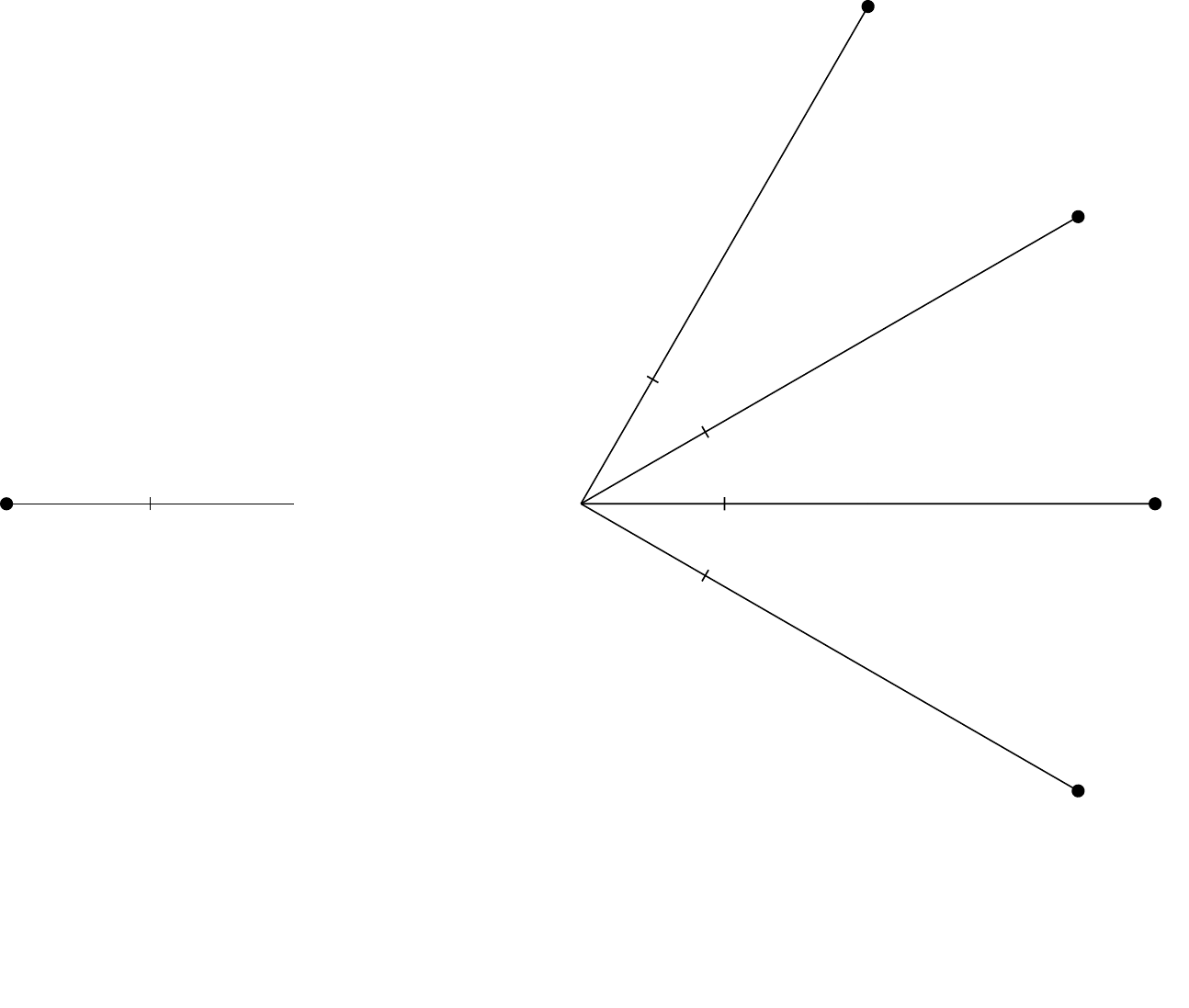
\caption{The $\alpha_i$ represent boundary components in $H_1$ where $W=H_1\times \R$. The $\alpha_i$ also represent the vertical directions in the $B_i$. Hence we see that $d^\pi_{B_i}(B_j,B_k)=d_{\alpha_i}(\alpha_j,\alpha_k)$.}
\label{halfedges}
\end{figure}

If $\alpha_i$ is not equal to $h a^k h^{-1} \alpha_1$ for any $k$ then we see that there exists a unique $k$ such that $\alpha_i$ lies between $h a^k h^{-1} \alpha_1$ and $h a^{k+1}h^{-1} \alpha_1$ (see Figure \ref{xiprojdist}). Denote by $\pi_{\alpha_i}(\alpha_j)$ the nearest point to $\alpha_j$ on $\alpha_i$. Fixing an appropriate orientaton on $\alpha_i$, we see that the projections $\pi_{\alpha_i}(h a^l h^{-1} \alpha_1)$ of the $h a^l h^{-1} \alpha_1$ onto $\alpha_i$ occur in the order \[\pi_{\alpha_i}(h a^k h^{-1}\alpha_1) < \pi_{\alpha_i}(h a^{k-1} h^{-1} \alpha_1) < \pi_{\alpha_i}(h a^{k-2} h^{-1} \alpha_1) < \cdots <\pi_{\alpha_i}(\beta)\] and \[\pi_{\alpha_i}(h a^{k+1} h^{-1} \alpha_1) > \pi_{\alpha_i}(h a^{k+2} h^{-1} \alpha_1) >\pi_{\alpha_i}(h a^{k+3} h^{-1} \alpha_1) > \cdots > \pi_{\alpha_i}(\beta).\] Hence $d_{\alpha_i}(h a^m h^{-1} \alpha_1,h a^n h^{-1} \alpha_1)$ is bounded by $d_{\alpha_i}(h a^k h^{-1} \alpha_1,h a^{k+1} h^{-1} \alpha_1)$. This projection distance is in turn bounded from above only in terms of the distance $d(h a^k h^{-1} \alpha_1,h a^{k+1} h^{-1} \alpha_1)$ from $h a^k h^{-1} \alpha_1$ to $h a^{k+1} h^{-1} \alpha_1$ in $\hyp^2$. Of course, we have $d(h a^k h^{-1} \alpha_1,h a^{k+1} h^{-1} \alpha_1)=d(\alpha_1,h a h^{-1} \alpha_1)$ is independent of $k$. Therefore $d_{\alpha_i}(h a^m h^{-1} \alpha_1,h a^n h^{-1} \alpha_1)$ is bounded above independently of $i$, $m$, and $n$ as long as $\alpha_i$ is not equal to any $h a^kh^{-1} \alpha_1$. 

\begin{figure}[h]

\centering
\def\svgwidth{0.5\textwidth}
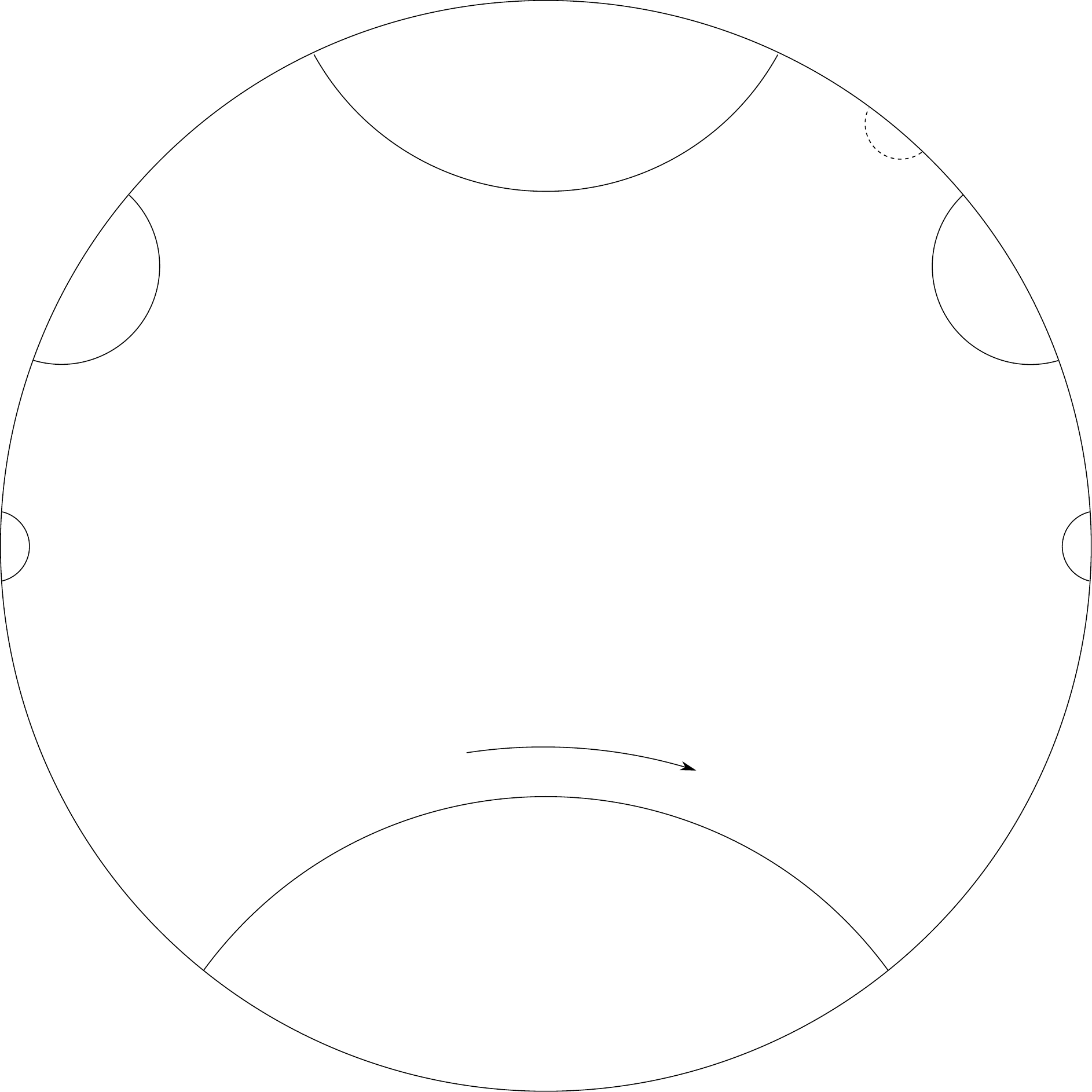

\caption{If $\alpha_i$ is between $ha^kh^{-1}\alpha_1$ and $ha^{k+1}h^{-1}\alpha_1$ then $d_{\alpha_i}(h a^m h^{-1} \alpha_1,h a^n h^{-1} \alpha_1)$ is bounded by $d_{\alpha_i}(h a^k h^{-1} \alpha_1,h a^{k+1} h^{-1} \alpha_1)$ (in this picture $k=0$).}
\label{xiprojdist}
\end{figure}

If $\alpha_i$ is equal to $h a^k h^{-1} \alpha_1$ for some $k$ with $k\notin \{m,n\}$, then we see as above that $d_{h a^k h^{-1} \alpha_1}(h a^m h^{-1} \alpha_1,h a^n h^{-1} \alpha_1)$ is bounded by $d_{h a^k h^{-1} \alpha_1}(h a^{k-1} h^{-1} \alpha_1,h a^{k+1} h^{-1} \alpha_1)$. This is in turn bounded above in terms of $d(h a^{k-1}\alpha_1 h^{-1} ,h a^{k+1} h^{-1} \alpha_1)=d(\alpha_1,h a^2 h^{-1} \alpha_1)$, which is clearly independent of $k,m,$ and $n$.

Finally if $C\notin \{B_1,B_2,\ldots\}$ then we claim that $d^\pi_{C}(h a^m h^{-1} B_1,h a^n h^{-1} B_1)=0$. For in this case let $u$ be the (black) vertex of $\tilde{\Gamma}$ corresponding to $h a^m h^{-1} B_1$, let $v$ be the vertex corresponding to $h a^n h^{-1} B_1$, and let $w$ be the vertex corresponding to $C$. Consider the geodesic $[v,w]$ from $v$ to $w$ in $\tilde{\Gamma}$. Up to exchanging the roles of $v$ and $w$, the geodesic $[u,u']$ from $u$ to $[v,w]$ has one of the two following properties: either (i) $u'=v$ and $[u,u']$ has length two or (ii) $u'$ is the first vertex on $[v,w]$ \textit{after} $v$ and $[u,u']$ has length one.   In the first case we have automatically that $\pi_{C}(h a^m h^{-1} B_1)=\pi_{C}(h a^n h^{-1} B_1)$. In the second case, we see that $u'$ is the vertex corresponding to the chosen lift $W$. We again see that $\pi_{C}(h a^m h^{-1} B_1)= \pi_{C}(h a^n h^{-1} B_1)$ unless $[v,w]$ has length two. In this case the lift $C$ is adjacent to $W$ and therefore $C=B_i$ for some $i$, which is a contradiction.
\end{proof}

\begin{proof}[Proof of Lemma \ref{qmvalues}]
First we prove that $q(a)=0$.

We have \[q(a)=q_0'(a)=q_0(a)+q_0(h ah^{-1}).\] Since, $a$ is elliptic, Proposition \ref{prop:qmconstruction} implies that  $q_0(a)=0$. We check that also $q_0(h a h^{-1})=0$, and this will prove that $q(a)=0$, as desired.

As in \cite[Section~3.2]{bbf}, we modify the distance functions $d^\pi_{C}$ to distance functions $d_{C}$, which satisfy $d_{C}\leq d^\pi_{C}$.   Choose $K$ large enough that $d^\pi_{C}(h a^m h^{-1} B_1,h a^n h^{-1} B_1)$ is bounded by $K$ for all $m,n\in \Z$ and for any $C\notin \{h a^m h^{-1} B_1,h a^n h^{-1} B_1\}$; such a $K$ exists by Lemma \ref{lem:distbound}. We build the quasi-tree of metric spaces $\mathcal{C}_K(\mathbf{X})$ using the spaces $\mathcal{C}(C)=\ell_C$, projections $\pi_C$, and distances $d_C$. Since for all $m,n$ and $C\notin \{h a^mh^{-1} B_1,h a^n h^{-1} B_1\}$, we have \[d_{C}(h a^m h^{-1} B_1,h a^n h^{-1} B_1)\leq d^\pi_{C}(h a^m h^{-1} B_1,h a^n h^{-1} B_1)\leq K,\] the space $\mathcal{C}(h a^m h^{-1} B_1)$ is joined by an edge to the space $\mathcal{C}(h a^n h^{-1} B_1)$ for all $m,n\in \Z$. Furthermore, this edge goes from $\pi_{h a^m h^{-1} B_1}(h a^n h^{-1} B_1)$ to $\pi_{h a^n h^{-1} B_1}(h a^m h^{-1} B_1)$.

To see that $h a h^{-1}$ is elliptic, consider as a basepoint $P=\pi_{B_1}(h a h^{-1} B_1)$. For $i\in \Z\setminus \{0\}$, let $e_i$ be the edge joining $\pi_{B_1}(h a^i h^{-1} B_1)$ to $\pi_{h a^i h^{-1} B_1}(B_1)$. The endpoints of the $e_i$ on $\mathcal{C}(B_1)$ occur between those of $e_{-1}$ and $e_1$. Furthermore the distance between the endpoints of $e_1$ and $e_{-1}$ on $\mathcal{C}(B_1)$ is bounded by $K$. Note that the endpoint of $e_1$ on $\mathcal{C}(B_1)$ is $P$. We now give an upper bound on the distance from $P$ to $h a^k h^{-1} P$ in $\mathcal{C}_K(\mathbf{X})$, which is independent of $k$. We have $h a^k h^{-1} P=\pi_{h a^k h^{-1} B_1}(h a^{k+1} h^{-1} B_1)$ and this is the endpoint on $\mathcal{C}(h a^k h^{-1} B_1)$ of the edge $h a^k h^{-1}e_1$. The endpoints of the edges $h a^k h^{-1} e_i$ on $\mathcal{C}(h a^k h^{-1} B_1)$ occur between the endpoints of $h a^k h^{-1} e_{-1}$ and $h a^k h^{-1} e_1$ and these are distance at most $K$ apart. Finally, the edge $e_k$ from $\mathcal{C}(B_1)$ to $\mathcal{C}(h a^k h^{-1}B_1)$ is equal to $h a^k h^{-1} e_{-k}$, which, by definition of $\C_K({\bf X})$, has length $L$. Thus, we have \[d(P,h a^k h^{-1} P)\leq d(P,e_k)+\operatorname{length}(e_k)+d(e_k,h a^k h^{-1}P)\leq K+L+K.\] See Figure \ref{elliptic}. This completes the proof that $h a h^{-1}$ is elliptic.

\begin{figure}[h]
\centering
\def\svgwidth{0.8\textwidth}
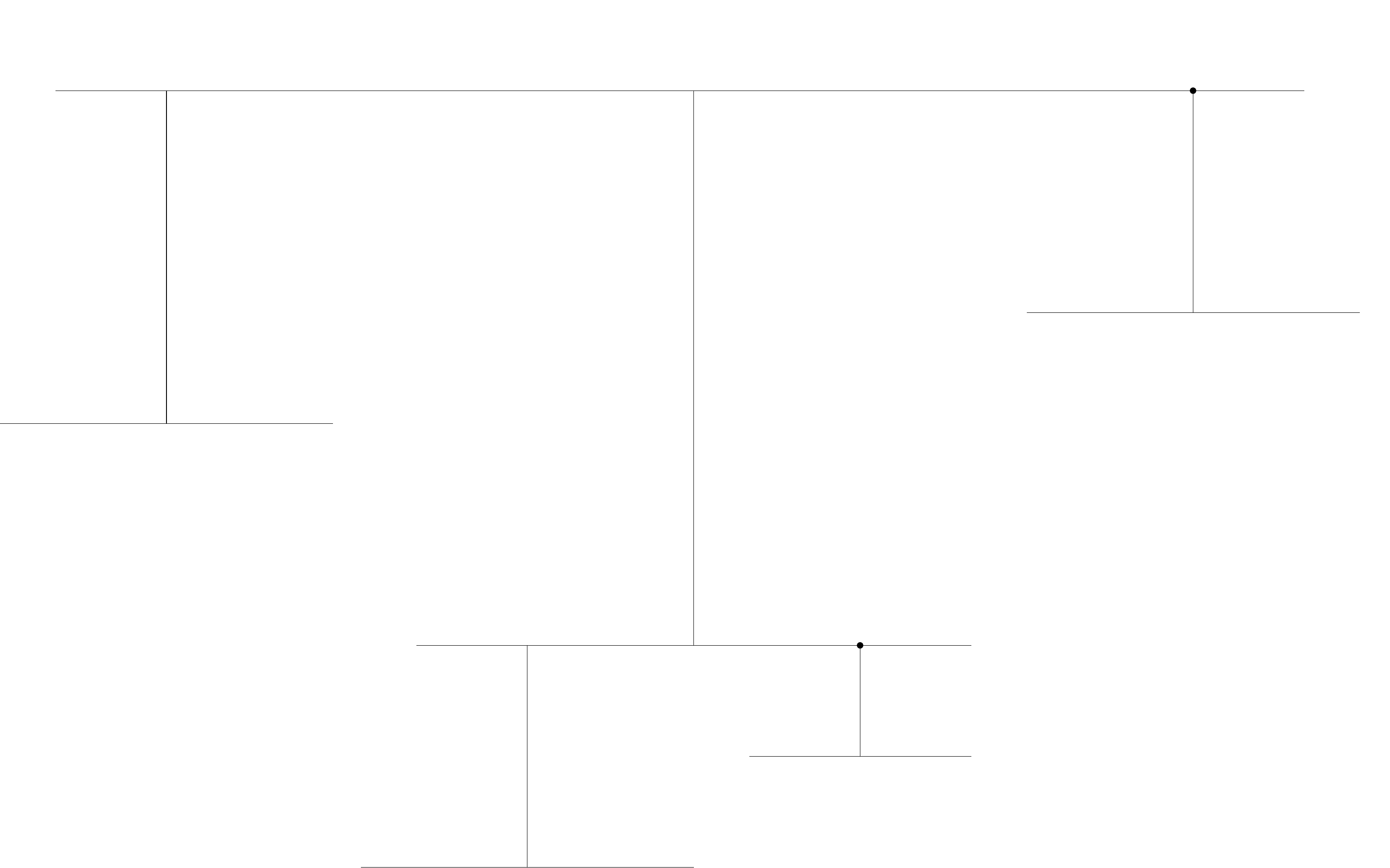

\caption{The arrangements of various lines in the quasi-tree $\mathcal{C}_K(\mathbf{X})$, showing that the orbit of the point $P$ is bounded.}
\label{elliptic}
\end{figure}

Finally, we show that $q(b)=q_0'(b)\neq 0$. We have \[q_0'(b)=q_0(b)+q_0(h b h^{-1})\] and $q_0(b)\neq 0$. We will show that $h b h^{-1}$ is elliptic in the action $N\curvearrowright \mathcal{C}_K(\mathbf{X})$. This will show that $q_0(h bh^{-1})=0$ and thus $q_0'(b)=q_0(b)\neq 0$, as claimed. Note that the conjugate $h bh^{-1}$ fixes the lift $W=h\tilde{X}_0$ of $X$ which is white, on which it acts as a vertical translation.  That is, parametrizing $W=H_1\times \R$ with $H_1$ a closed convex subset of $\mathbb H^2$, we have that $h bh^{-1}$ acts as $\textrm{id}\times \phi$ where $\phi$ is a loxodromic isometry of $\R$. As before, let $B_1,B_2,\ldots$ be the other lifts of $X$ to which $W$ is glued. These all correspond to \textit{black} vertices of $\tilde{\Gamma}$ and are fixed by $h bh^{-1}$.  Parametrizing $B_i=H'_i\times\R$, with $H'_i$ a closed convex subset of $\mathbb H^2$, we see that the vertical directions of $B_i$ correspond to boundary components of $H_1$.  Thus $h bh^{-1}$ fixes each $\ell_{B_i}$ pointwise, hence $h bh^{-1}$ is elliptic, as claimed.
\end{proof}

\section{Fundamental groups of finite-volume cusped hyperbolic 3-manifolds}
\label{sec:hyp3man}

Let $M$ be a finite-volume cusped hyperbolic 3-manifold. The \textit{cusps} of $M$ are all homeomorphic to $T^2\times [0,\infty)$ where $T^2$ is the torus. We recall that \textit{Dehn filling} one of the cusps of $M$ consists of 
\begin{itemize}
\item removing $T^2\times (0,\infty)$ to form the manifold $M_0$ with a boundary component homeomorphic to $T^2$,
\item gluing in a copy of the solid torus $D^2\times S^1$ by identifying its boundary with the boundary component of $M_0$ by some homeomorphism to define the manifold $M'$.
\end{itemize}
This homeomorphism identifies $\partial D^2 \times \{*\}$ (where $*$ is any point of $S^1$) with a simple closed curve on the torus boundary component of $M_0$. The isotopy class of this curve is called the \textit{slope} of the Dehn filling, and it can be shown that the resulting manifold $M'$ is determined up to homeomorphism by this slope.

Since $M$ has finite volume, it has finitely many cusps $T_1\times [0,\infty),\ldots,T_n\times [0,\infty)$. Thurston's Hyperbolic Dehn Surgery Theorem states:

\begin{thm}[{\cite{thurston}}]
There exists a finite set $S_i$ of slopes on $T_i$ such that if we Dehn fill \textit{each} cusp of $M$ and the slope for $T_i$ avoids $S_i$ for each $i$, then the resulting manifold $M'$ admits a hyperbolic metric.
\end{thm}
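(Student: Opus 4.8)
The final statement is Thurston's Hyperbolic Dehn Surgery Theorem, which the paper rightly cites rather than proves; the plan here is to sketch Thurston's original deformation-theoretic argument via ideal triangulations and gluing equations. First I would fix a topological ideal triangulation $\mathcal{T}$ of $M$ (obtained, if necessary, by subdividing the Epstein--Penner canonical decomposition, with the caveat that this may force some degenerate tetrahedra in intermediate steps). Assigning to each of the $N$ ideal tetrahedra a shape parameter $z\in\mathbb{C}\setminus\{0,1\}$, the requirement that the tetrahedra glue up to a (possibly incomplete) hyperbolic structure on the open manifold $M\setminus\{\text{cusps}\}$ becomes Thurston's \emph{gluing equations}: one edge equation per edge of $\mathcal{T}$, asserting that the product of dihedral shape parameters around each edge equals $1$ and that the arguments sum to $2\pi$. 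The complete hyperbolic metric on $M$, which exists by hypothesis, corresponds to a distinguished solution $z^{0}$ together with the \emph{completeness equations}, one per cusp, forcing the holonomy of each cusp torus to be parabolic.

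Next I would carry out the dimension count. Using the Neumann--Zagier symplectic relations among the gluing equations (equivalently, a ``half lives, half dies'' argument on $H_{1}(\partial M)$), one shows that near $z^{0}$ the variety $V$ cut out by the edge equations alone is a smooth complex manifold of dimension $n$, where $n$ is the number of cusps, and that the $n$ cusp holonomies furnish local coordinates on $V$. Every point of $V$ near $z^{0}$ determines an incomplete hyperbolic structure on $M\setminus\{\text{cusps}\}$ with a developing map and holonomy; to each cusp $T_{i}$ one then attaches a pair of \emph{generalized Dehn surgery coefficients} $(p_{i},q_{i})\in\mathbb{R}^{2}\cup\{\infty\}$, determined by the complex translation lengths of the meridian and longitude holonomies (via $p_{i}u_{i}+q_{i}v_{i}=2\pi\mathrm{i}$), with $(p_{i},q_{i})=\infty$ exactly at the complete structure. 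The key input is that the assignment $V\to(\mathbb{R}^{2}\cup\{\infty\})^{n}$ sending a structure to its tuple of surgery coefficients is a local homeomorphism at $z^{0}\mapsto(\infty,\dots,\infty)$; hence its image contains every tuple $((p_{1},q_{1}),\dots,(p_{n},q_{n}))$ with each $\lvert(p_{i},q_{i})\rvert$ sufficiently large.

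Finally I would analyze the metric completion. When $(p_{i},q_{i})$ is a pair of coprime integers, the corresponding incomplete structure has the property that its metric completion adjoins in the $i$-th cusp a closed geodesic whose tubular neighborhood is precisely the Dehn filling solid torus, with core curve isotopic to the new geodesic and cone angle exactly $2\pi$ around it; that is, the completion is a genuine smooth hyperbolic manifold, namely the Dehn filling of $M$ along the slope $p_{i}\mu_{i}+q_{i}\lambda_{i}$. One also checks that for surgery coefficients close enough to $(\infty,\dots,\infty)$ all tetrahedron shapes stay near $z^{0}$, so the tetrahedra remain non-degenerate and the developed picture really is a hyperbolic structure. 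Taking $S_{i}$ to be the finite set of coprime slopes $p_{i}\mu_{i}+q_{i}\lambda_{i}$ with $\lvert(p_{i},q_{i})\rvert$ below the threshold from the previous paragraph then yields the theorem, since for each $i$ there are only finitely many coprime integer pairs of bounded norm. The hard part will be this completion analysis together with the non-degeneracy control: passing from a statement about holonomy to one about the metric completion, identifying the new core geodesic, and pinning down the cone-angle-$2\pi$ condition is the technical heart, and it is exactly the step where the possible flat tetrahedra in $\mathcal{T}$ must be handled with care --- a subtlety that modern treatments often circumvent via the hyperbolic cone-manifold deformation theory of Hodgson--Kerckhoff, via Gromov--Thurston's $2\pi$-theorem, or by a direct Gromov--Hausdorff geometric-convergence argument.
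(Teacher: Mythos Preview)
Your proposal is correct in recognizing that the paper does not prove this theorem at all: it is stated with a citation to Thurston and then used as a black box in the subsequent argument about cusped hyperbolic $3$--manifold groups. There is therefore no proof in the paper to compare against; your sketch of the classical deformation-theoretic argument via ideal triangulations, gluing equations, the Neumann--Zagier dimension count, and the completion analysis is a faithful outline of Thurston's original approach and goes well beyond what the paper itself supplies.
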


The Dehn-filled manifold $M'$ in this theorem is closed. If we choose the slope $\alpha_i$ on $T_i$ for each $i$, then $\alpha_i$ corresponds to a conjugacy class in $\pi_1(M)$. We have \[\pi_1(M')=\pi_1(M)/ \llangle\alpha_1, \ldots, \alpha_n  \rrangle\] where $\llangle  S \rrangle $ denotes the normal closure of a subset $S$ in $\pi_1(M)$. In particular, there is a quotient $\pi_1(M)\onto \pi_1(M')$. Any isotopy class of simple closed curve on $T_i$ other than $\alpha_i$ corresponds to a conjugacy class in $\pi_1(M)$ which is \textit{not} in the kernel of the quotient map $\pi_1(M)\to \pi_1(M')$.

Consider the cusp $T_1\times [0,\infty)$. By choosing as a basepoint $x\in T_1\times \{0\}$, loops in $T_1\times \{0\}$ based at $x$ define a subgroup $H\cong \Z^2$ of $\pi_1(M,x)$. A slope on $T_1$ then corresponds to a primitive element of $\Z^2$. Choose two primitive elements $a$ and $b$ of $H$ which do not lie in the finite set $S_1\subset H$.  We are now ready to prove the main result of this section.

\begin{thm}
The poset $\H(\pi_1(M))$ contains no largest element.
\end{thm}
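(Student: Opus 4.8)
The plan is to build two cobounded hyperbolic actions of $G=\pi_1(M)$ from closed hyperbolic Dehn fillings of $M$ and then apply Lemma~\ref{lem:mainlem} to the commuting pair $a,b\in H\cong\Z^2$.

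First I would Dehn fill the cusp $T_1$ along the slope $a$ and fill each remaining cusp $T_i$ ($i\geq 2$) along an arbitrary slope $\beta_i\notin S_i$. By Thurston's Hyperbolic Dehn Surgery Theorem the resulting closed manifold $M_a$ is hyperbolic, so $\pi_1(M_a)$ acts cocompactly, hence coboundedly, on $\hyp^3$. Precomposing with the quotient homomorphism $G\twoheadrightarrow \pi_1(M_a)=G/\llangle a,\beta_2,\dots,\beta_n\rrangle$ yields a cobounded action $G\curvearrowright Y$ on the hyperbolic space $\hyp^3$ in which $a$ acts trivially, hence elliptically. Since $b$ is a slope on $T_1$ distinct from the filling slope $a$, its image in $\pi_1(M_a)$ is nontrivial, as recalled after Thurston's theorem above; and since $\pi_1(M_a)$ is the fundamental group of a closed hyperbolic $3$--manifold it is torsion-free and acts on $\hyp^3$ without parabolics, so this image is a loxodromic isometry. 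Thus in $G\curvearrowright Y$ the element $a$ is elliptic and $b$ is loxodromic. Interchanging the roles of $a$ and $b$ — Dehn filling $T_1$ along $b$ instead, together with admissible slopes on the other cusps — produces in the same way a cobounded action $G\curvearrowright X$ on $\hyp^3$ in which $b$ is elliptic and $a$ is loxodromic; here one uses that $a$ is a slope on $T_1$ distinct from $b$. Both fillings are legitimate: we only need $a$ and $b$ to be distinct primitive slopes on $T_1$ outside the finite set $S_1$, and such a pair exists since $T_1$ carries infinitely many slopes.

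Now $a$ and $b$ commute, $a$ acts loxodromically and $b$ acts elliptically in $G\curvearrowright X$, and $b$ acts loxodromically in $G\curvearrowright Y$, so Lemma~\ref{lem:mainlem} applies: there is no action $G\curvearrowright Z$ on a hyperbolic space with $G\curvearrowright X\preceq G\curvearrowright Z$ and $G\curvearrowright Y\preceq G\curvearrowright Z$. Since $G\curvearrowright X$ and $G\curvearrowright Y$ are cobounded they represent elements of $\H(G)$, and a largest element of $\H(G)$ would have to dominate both of them, contradicting Lemma~\ref{lem:mainlem}. Hence $\H(\pi_1(M))$ contains no largest element. The only step requiring genuine input is the claim that the non-filling slope acts loxodromically after filling; this combines the already-quoted fact that a non-filling slope survives with nontrivial image in the Dehn-filled group with the observation that a closed hyperbolic $3$--manifold group is torsion-free and has no parabolic elements, so any nontrivial element acts as a loxodromic isometry of $\hyp^3$. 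Everything else is immediate from the setup and Lemma~\ref{lem:mainlem}, so I do not expect a real obstacle here.
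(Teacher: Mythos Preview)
Your proof is correct and follows essentially the same approach as the paper: build two cobounded actions on $\hyp^3$ via hyperbolic Dehn fillings of $M$ along the slopes $a$ and $b$ respectively (with admissible fillings on the remaining cusps), observe that the filled slope becomes elliptic while the other survives as a loxodromic, and apply Lemma~\ref{lem:mainlem}. Your justification that the surviving slope acts loxodromically (nontrivial image plus torsion-freeness and absence of parabolics in a closed hyperbolic $3$--manifold group) is exactly what the paper uses, stated a bit more explicitly.
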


\begin{proof}
We will show that 
there are cobounded actions $\pi_1(M)\curvearrowright X$ and $\pi_1(M)\curvearrowright Y$ with $X$ and $Y$ hyperbolic such that $a$ acts loxodromically and $b$ acts elliptically on $X$ and $a$ acts elliptically and $b$ acts loxodromically on $Y$.  The result will then follow by Lemma \ref{lem:mainlem}.


Let $M'$ be the manifold obtained by Dehn filling $T_1\times [0,\infty)$ with slope $b$ and filling $T_2\times [0,\infty),\ldots, T_n\times [0,\infty)$ with \textit{any} slopes avoiding the sets $S_2,\ldots,S_n$. The resulting manifold $M'$ is closed and hyperbolic so $\pi_1(M')$ admits a cobounded properly discontinuous action on $\hyp^3$. We obtain an action of $\pi_1(M)$ on $\hyp^3$ by first taking the quotient $\pi_1(M)\to \pi_1(M')$ and then composing with the action of $\pi_1(M')$ on $\hyp^3$. Of course $b$ acts elliptically in this action since it lies in the kernel of $\pi_1(M)\to \pi_1(M')$. On the other hand, $a$ does not lie in this kernel. Since every nontrivial element of $\pi_1(M')$ acts loxodromically on $\hyp^3$, $a$ acts loxodromically in this action.  This gives us our action $\pi_1(M)\curvearrowright X$. Reversing the roles of $a$ and $b$ gives the construction of $\pi_1(M)\curvearrowright Y$.
\end{proof}

\begin{rem}
The proof above applies without major changes whenever $G$ is relatively hyperbolic with a peripheral subgroup isomorphic to $\Z^2$ or, even more generally, when $G$ is acylindrically hyperbolic with a \textit{hyperbolically embedded} subgroup isomorphic to $\Z^2$ using a more general version of Dehn filling (see \cite{dgo} for definitions and details). We omit the details.
\end{rem}

%
%
%
%

\section{Fundamental groups of Anosov mapping tori}
\label{sec:anosov}

Fix an element $\phi\in \SL(2,\Z)$ that is Anosov; that is, $\phi$ has distinct irrational eigenvalues $\lambda>1$ and $\lambda^{-1}$. The map $\phi$ is an element of the mapping class group of the torus $T^2$, and the group $G=\Z^2 \rtimes_\phi \Z$ is the fundamental group of the mapping torus of $\phi$. Our goal in this section is prove Theorem \ref{anosovposet}.  To do so, we will follow the following steps:
\begin{enumerate}[(1)]
\item Since $G$ is solvable it admits only lineal and quasi-parabolic structures.
\item We show that quasi-parabolic structures are equivalent to confining subsets of $\Z^2$ under the action of $\phi$ or $\phi^{-1}$.
\item We classify quasi-parabolic structures using the correspondence with confining subsets and the geometry of $\R^2$, showing that there are only two up to equivalence.
\item Finally, we classify the lineal structures.
\end{enumerate}

We begin by considering the abelianization of $G$.

\begin{lem}
\label{abelianization}
The abelianization of $G$ is virtually cyclic.
\end{lem}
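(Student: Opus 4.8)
The plan is to compute $G^{\mathrm{ab}}$ essentially by hand. Write $G=\Z^2\rtimes_\phi\Z=\langle \Z^2, t\rangle$, where $t$ generates the $\Z$ factor and acts on $v\in\Z^2$ by $tvt^{-1}=\phi(v)$ (written additively). First I would identify the commutator subgroup. Computing a general commutator in $G$ gives
\[
[(v,n),(w,m)]=\bigl((I-\phi^m)v+(\phi^n-I)w,\ 0\bigr),
\]
and since $\phi^k-I=(\phi-I)(\phi^{k-1}+\cdots+I)$, every such element lies in $(\phi-I)\Z^2\times\{0\}$; conversely $(\phi-I)w=[(0,0),(w,1)]$ for all $w$, so $[G,G]=(\phi-I)\Z^2\times\{0\}$. (The subgroup $(\phi-I)\Z^2$ is $\phi$-invariant because $\phi$ commutes with $\phi-I$, hence normal in $G$.) Therefore
\[
G^{\mathrm{ab}}\ \cong\ \bigl(\Z^2/(\phi-I)\Z^2\bigr)\times\Z .
\]

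Next I would show the first factor is finite. Since $\phi$ is Anosov with eigenvalues $\lambda>1$ and $\lambda^{-1}$, the eigenvalues of $\phi-I$ are $\lambda-1$ and $\lambda^{-1}-1$, so $\det(\phi-I)=(\lambda-1)(\lambda^{-1}-1)=2-\operatorname{tr}(\phi)$. Because $\operatorname{tr}(\phi)=\lambda+\lambda^{-1}>2$, this determinant is nonzero; hence $\phi-I$ is injective on $\Z^2$ and the quotient $\Z^2/(\phi-I)\Z^2$ is a finite abelian group (of order $\operatorname{tr}(\phi)-2$). Consequently $G^{\mathrm{ab}}$ is the direct product of a finite group with $\Z$, and the subgroup $\{1\}\times\Z$ has finite index and is cyclic, so $G^{\mathrm{ab}}$ is virtually cyclic.

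The only step needing any care is the identification $[G,G]=(\phi-I)\Z^2$: one inclusion is the commutator computation above, and for the other it suffices to observe that modulo $(\phi-I)\Z^2$ the image of $\Z^2$ is central and $t$ commutes with it, so $G/(\phi-I)\Z^2$ is abelian. There is no substantive obstacle; the whole argument is a short explicit computation, and the role of the Anosov hypothesis is exactly to guarantee $\operatorname{tr}(\phi)\neq 2$, i.e.\ that $\phi-I$ is nonsingular.
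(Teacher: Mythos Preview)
Your proof is correct and follows essentially the same route as the paper: both arguments amount to showing that the image of $\Z^2$ in $G^{\mathrm{ab}}$ is $\Z^2/(\phi-I)\Z^2$ and that this quotient is finite because $\det(\phi-I)\neq 0$. The paper does this by writing down the presentation and row-reducing the matrix $\phi-I$ to diagonal form, while you compute $[G,G]$ directly and invoke the determinant; these are equivalent packagings of the same linear-algebra fact.

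One small slip: the commutator $[(0,0),(w,1)]$ is trivial, since $(0,0)$ is the identity of $G$. You meant something like $[(0,1),(w,0)]$, which by your own formula (with $v=0$, $n=1$, $m=0$) gives exactly $((\phi-I)w,0)$. This is purely a typo and does not affect the argument.
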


\begin{proof}
Writing $\phi = \begin{pmatrix} a & b \\ c & d \end{pmatrix},$ the group $G$ has the presentation \[G=\left\langle x, y, t : [x,y]=1, txt^{-1}=x^ay^c, tyt^{-1}=x^by^d\right\rangle.\] The abelianization $G'$ is generated by $\overline{x}$, $\overline{y}$, and $\overline{t}$, respectively, subject to the relations \[\overline{x}=a\overline{x}+c\overline{y}\quad \text{and} \quad \overline{y}=b\overline{x}+d\overline{y}.\] Equivalently, we may consider this as the system of equations \[ \begin{pmatrix} a-1 & c \\ b & d-1 \end{pmatrix} \begin{pmatrix} \overline{x} \\ \overline{y} \end{pmatrix} = \begin{pmatrix} 0 \\ 0 \end{pmatrix}.\] We will perform row reduction on this matrix, using only the following elementary row operations:
\begin{itemize}
\item adding an integer multiple of one row to another,
\item swapping rows, and
\item multiplying a row by an integer.
\end{itemize}
The result is a sequence of systems of equations which hold in $G'$. Since the matrix $\begin{pmatrix} a-1 & c \\ b & d-1 \end{pmatrix}$ has nonzero determinant, at the final stage we arrive at an equation \[\begin{pmatrix} m & 0 \\ 0 & n \end{pmatrix} \begin{pmatrix} \overline{x} \\ \overline{y}\end{pmatrix} = \begin{pmatrix} 0 \\ 0 \end{pmatrix},\] where $m,n\in \Z \setminus \{0\}$. In other words, $m\overline{x}=0$ and $n\overline{y}=0$. Thus, we have $G'\cong \langle \overline{t} \rangle \times \langle \overline{x}, \overline{y} \rangle$ and $\langle \overline{x}, \overline{y}\rangle$ is finite. This proves the statement.
\end{proof}

As in the proof of Lemma \ref{abelianization}, we denote by $t$ the generator of the $\Z$ factor of $G=\Z^2 \rtimes_\phi \Z$.

\begin{lem}
\label{lem:qpconfining}
A hyperbolic structure $[T]$ is an element of $\H_{qp}(G)$ if and only if there exists a symmetric subset $Q\subset \Z^2$ which is strictly confining under the action of $\phi$ or $\phi^{-1}$ such that $[T]=[Q\cup \{t^{\pm 1}\}]$.
\end{lem}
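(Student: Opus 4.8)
The plan is to prove both implications; the reverse one follows immediately from the material already set up, so the content lies in the forward direction. For the reverse implication, suppose $Q \subset \Z^2$ is symmetric and strictly confining under $\phi$. Writing $G = \Z^2 \rtimes_\phi \Z$ with $t$ a generator of the $\Z$ factor, the remark following Definition~\ref{def:confining} (which records the relevant part of \cite[Theorem~4.1]{ccmt}) gives $[Q \cup \{t^{\pm 1}\}] \in \H_{qp}(G)$. If instead $Q$ is strictly confining under $\phi^{-1}$, one applies the same remark to the decomposition $G = \Z^2 \rtimes_{\phi^{-1}} \Z$ with generator $t^{-1}$, using that $[Q \cup \{(t^{-1})^{\pm 1}\}] = [Q \cup \{t^{\pm 1}\}]$.

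For the forward implication, fix $[T] \in \H_{qp}(G)$ and the associated cobounded quasi-parabolic action $G \curvearrowright \Gamma(G,T)$, with $\xi$ the unique fixed point on the boundary and $\rho$ the homogeneous Busemann quasimorphism. The key step is to show that this action is \emph{regular}, that is, that $\rho$ is a homomorphism. Since $\Z^2 \le G$ is abelian, $\rho|_{\Z^2}$ is a homomorphism $\Z^2 \to \R$; since $\rho$ is constant on conjugacy classes and $\phi(h) = tht^{-1}$, this homomorphism is $\phi$-invariant. As $1$ is not an eigenvalue of $\phi$ (its eigenvalues being $\lambda$ and $\lambda^{-1}$), there is no nonzero $\phi$-invariant homomorphism $\Z^2 \to \R$, so $\rho|_{\Z^2} \equiv 0$. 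Next, for any $g = h t^k$ with $h \in \Z^2$ one has $g^n = h_n t^{nk}$ where $h_n = h\,\phi^k(h) \cdots \phi^{(n-1)k}(h) \in \Z^2$; combining the defining inequality of $\rho$ (with defect $D$) with $\rho(h_n) = 0$ and $\rho(t^{nk}) = nk\,\rho(t)$ gives $|\rho(g^n) - nk\,\rho(t)| \le D$, hence $|\rho(g) - k\,\rho(t)| = \tfrac1n|\rho(g^n) - nk\,\rho(t)| \le D/n$ for every $n$, so $\rho(ht^k) = k\,\rho(t)$. Thus $\rho$ is a homomorphism, and it is nonzero because a quasi-parabolic action contains a loxodromic element fixing $\xi$, and any such element has nonzero Busemann quasimorphism. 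Consequently $\rho(t) \ne 0$ and $\ker\rho = \Z^2$.

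It now remains to invoke the classification of regular quasi-parabolic structures, \cite[Theorem~4.1]{ccmt}: since $\ker\rho$ is the normal subgroup $\Z^2$ with $G/\Z^2 \cong \Z$, the structure $[T]$ is represented by $[Q \cup \{s^{\pm 1}\}]$ for some symmetric $Q \subset \Z^2$ strictly confining under conjugation by $s$, where $s = t$ if $\rho(t) > 0$ and $s = t^{-1}$ if $\rho(t) < 0$; viewing $G$ as $\Z^2 \rtimes_{\phi^{\pm 1}} \Z$ accordingly, $Q$ is strictly confining under $\phi$ or under $\phi^{-1}$, and $[Q \cup \{s^{\pm 1}\}] = [Q \cup \{t^{\pm 1}\}]$, as required. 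The one substantive point here is the regularity of an arbitrary cobounded quasi-parabolic action of $G$; the essential input is the Anosov hypothesis, which forces $\rho$ to vanish on $\Z^2$ (there is no nonzero invariant homomorphism to $\R$), so that $\rho$ is pulled back from $G/\Z^2 \cong \Z$. Once that is established, the statement reduces to the general correspondence of \cite[Theorem~4.1]{ccmt} between regular quasi-parabolic structures on $H \rtimes_\alpha \Z$ and symmetric strictly confining subsets of $H$, applied with $H = \Z^2$ and $\alpha \in \{\phi, \phi^{-1}\}$.
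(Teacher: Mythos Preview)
Your argument is correct and follows essentially the route the paper indicates (the paper omits the proof, pointing to Lemma~\ref{abelianization} and \cite[Proposition~2.6]{ar}, itself based on \cite[Theorems~4.4~\&~4.5]{ccmt}). The substantive step in both approaches is establishing regularity of the Busemann quasimorphism, and your direct argument---that a $\phi$-invariant homomorphism $\Z^2\to\R$ must vanish because $1$ is not an eigenvalue of $\phi$---is precisely the content of Lemma~\ref{abelianization} rephrased (the abelianization being virtually cyclic is equivalent to $\det(\phi-I)\neq 0$, which is what forces $\rho|_{\Z^2}=0$). Your subsequent computation that $\rho(ht^k)=k\rho(t)$, and the appeal to the confining-subset classification from \cite{ccmt}, are exactly what the omitted proof would contain; the only cosmetic difference is that the paper cites \cite[Theorems~4.4~\&~4.5]{ccmt} for the passage from regularity to a confining subset, whereas you package this under \cite[Theorem~4.1]{ccmt}.
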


The proof relies on Lemma \ref{abelianization} and is completely analogous to \cite[Proposition~2.6]{ar},  which is in turn based on \cite[Theorems~4.4~\&~4.5]{ccmt}, so we omit it.

We consider symmetric subsets $Q\subset \Z^2$ which are confining under the action of $\phi$. Denote by $\lambda>1$ and $\lambda^{-1}$ the eigenvalues of $\phi$ with corresponding eigenvectors $v^+$ and $v^-$, respectively. We suppose that $v^+$ and $v^-$ have been chosen to be unit vectors. Given $\epsilon>0$, we define a symmetric subset of $\Z^2$ by \[Q_\epsilon=\{av^+ + bv^- \in \Z^2: a,b\in \R, \,|b|\leq\epsilon\}.\] In other words, $Q_\epsilon$ is the intersection of a neighborhood of the line $\R v^+$ in $\R^2$ with $\Z^2$.

\begin{figure}
\centering
\def\svgwidth{3in}
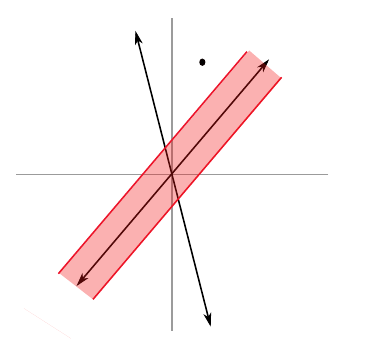
\caption{Proof of Lemma \ref{equivsets}.  The intersections of the blue and red shaded regions with $\mathbb Z^2$ are $Q_\epsilon$ and $Q_{\epsilon/\lambda}$, respectively.}
\label{fig:Q_varepsilon}
\end{figure}

\begin{lem}
\label{equivsets}
For any $\epsilon>0$, the set $Q_\epsilon$ is strictly $\phi$--confining. Furthermore if $\epsilon,\delta>0$, then we have \[[Q_\epsilon \cup \{t^{\pm 1}\}] = [Q_\delta \cup \{t^{\pm 1}\}].\]
\end{lem}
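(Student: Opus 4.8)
The plan is to verify the two assertions separately, both by direct computation in $\R^2$ using the eigenbasis $\{v^+, v^-\}$. For the first assertion, that $Q_\epsilon$ is strictly $\phi$-confining, I would check the three conditions of Definition \ref{def:confining}. Write an element of $\Z^2$ as $av^+ + bv^-$. Then $\phi(av^+ + bv^-) = \lambda a v^+ + \lambda^{-1} b v^-$, so if $|b| \le \epsilon$ then the $v^-$-coordinate of $\phi(av^+ + bv^-)$ is $\lambda^{-1} b$ with $|\lambda^{-1} b| \le \lambda^{-1}\epsilon < \epsilon$; this gives condition (a) and moreover shows the containment $\phi(Q_\epsilon) \subseteq Q_{\lambda^{-1}\epsilon}$ is \emph{strict}. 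For condition (b), any lattice point $av^+ + bv^-$ has some fixed $b$, and applying $\phi^{-k}$ multiplies the $v^-$-coordinate by $\lambda^k \to \infty$, so $\phi^{-k}(av^+ + bv^-) \notin Q_\epsilon$ for large $k$ — wait, that is the wrong direction; I need $\bigcup_k \phi^{-k}(Q_\epsilon) = \Z^2$, equivalently every lattice point lies in some $\phi^{-k}(Q_\epsilon)$, i.e. $\phi^k$ of it lies in $Q_\epsilon$. Since $\phi^k(av^+ + bv^-) = \lambda^k a v^+ + \lambda^{-k} b v^-$ has $v^-$-coordinate $\lambda^{-k}b \to 0$, indeed $\phi^k(av^+ + bv^-) \in Q_\epsilon$ once $\lambda^{-k}|b| \le \epsilon$, giving (b). For condition (c), note $Q_\epsilon \cdot Q_\epsilon = Q_\epsilon + Q_\epsilon \subseteq Q_{2\epsilon}$ (the $v^-$-coordinates add), and $\phi^{k_0}(Q_{2\epsilon}) \subseteq Q_{2\lambda^{-k_0}\epsilon} \subseteq Q_\epsilon$ as soon as $2\lambda^{-k_0} \le 1$, i.e. $k_0 \ge \log_\lambda 2$. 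Symmetry of $Q_\epsilon$ is immediate since $av^+ + bv^- \in Q_\epsilon \iff -av^+ - bv^- \in Q_\epsilon$.

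For the second assertion, that $[Q_\epsilon \cup \{t^{\pm 1}\}] = [Q_\delta \cup \{t^{\pm 1}\}]$ in $\H(G)$, by the symmetry of the claim it suffices to show $[Q_\delta \cup \{t^{\pm 1}\}] \preccurlyeq [Q_\epsilon \cup \{t^{\pm 1}\}]$ whenever $\delta > \epsilon$ (the reverse is trivial since then $Q_\delta \cup \{t^{\pm 1}\} \supseteq Q_\epsilon \cup \{t^{\pm 1}\}$, hence is dominated by it). So fix $\delta > \epsilon$ and set $S = Q_\epsilon \cup \{t^{\pm 1}\}$, $T = Q_\delta \cup \{t^{\pm 1}\}$; I must bound $\sup_{g \in T} \|g\|_S < \infty$. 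It is enough to bound $\|g\|_S$ uniformly for $g \in Q_\delta$. Given $g = av^+ + bv^- \in Q_\delta \subset \Z^2$ with $|b| \le \delta$, choose $k = k(\delta,\epsilon)$ with $\lambda^{-k}\delta \le \epsilon$; then $\phi^k(g) = t^k g t^{-k} \in Q_\epsilon$, so $g = t^{-k}\phi^k(g) t^k$ expresses $g$ as a product of $2k$ copies of $t^{\pm 1}$ and one element of $Q_\epsilon$, giving $\|g\|_S \le 2k + 1$. This bound is independent of $g$, completing the argument.

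The routine parts are the coordinate bookkeeping above; the only mild subtlety is making sure condition (b) is interpreted with the correct direction of iteration, and confirming that conjugation by $t$ really does act as $\phi$ on $\Z^2$ in the semidirect product convention $G = \Z^2 \rtimes_\phi \Z$ — with the presentation from Lemma \ref{abelianization}, $t x t^{-1}$ applies $\phi$ (reading columns), which is what the estimate $\|g\|_S \le 2k+1$ uses. I do not anticipate a genuine obstacle here: both statements reduce to the elementary fact that $\phi$ contracts the $v^-$-direction by the definite factor $\lambda^{-1} < 1$, so neighborhoods of the expanding axis $\R v^+$ are nested under $\phi$ and all become equivalent as generating sets after finitely many conjugations by $t$.
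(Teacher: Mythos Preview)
Your approach and the paper's are essentially identical: verify the confining axioms by direct computation in the eigenbasis, and prove equivalence of the generating sets by using a single conjugation $g = t^{-k}\phi^k(g)t^k$ to move $Q_\delta$ into $Q_\epsilon$. Your treatment of conditions (b) and (c) and of the second assertion is correct and matches the paper.

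There is, however, one genuine gap in your argument for \emph{strict} confinement. You show $\phi(Q_\epsilon)\subseteq Q_{\lambda^{-1}\epsilon}$ and then declare the containment in $Q_\epsilon$ strict, apparently on the strength of the strict inequality $\lambda^{-1}\epsilon<\epsilon$. But $Q_\epsilon$ and $Q_{\lambda^{-1}\epsilon}$ are intersections of strips in $\R^2$ with the lattice $\Z^2$, and it is not automatic that the smaller strip misses any lattice points that the larger one contains: one must actually produce a point of $\Z^2$ whose $v^-$-coordinate lies in $(\lambda^{-1}\epsilon,\epsilon]$. The paper supplies exactly this step: since the eigenvalue $\lambda$ of the integer matrix $\phi$ is irrational, the eigenline $\R v^+$ has irrational slope, so any line parallel to $v^+$ projects to a dense geodesic on $T^2$ and hence passes arbitrarily close to $\Z^2$; choosing such a line strictly between the boundary lines of the two strips yields a lattice point in $Q_\epsilon\setminus Q_{\lambda^{-1}\epsilon}$. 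Equivalently, one can argue that the projection $\pi(\Z^2)$ onto the $v^-$-axis is a dense (not discrete) subgroup of $\R$. Either way, you need to invoke the irrationality of the eigendirection; add that sentence and your proof is complete.
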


\begin{proof}
It is straightforward to check that $Q_\epsilon$ is confining using the fact that the action of $\phi$ on $\Z^2$ stretches $Q_\varepsilon$ in the direction of $v^+$ and contracts it in the direction of $v^-$. To prove that it is strictly confining, note that $\phi(Q_{\epsilon})\subset Q_{\epsilon/\lambda}$, and so it suffices to check that $Q_\epsilon \setminus Q_{\epsilon/\lambda}$ is nonempty. The set $\{av^+ + bv^- : a,b \in \R,\, |b|\leq\epsilon\}$ is bounded by two lines $L^u$ and $L^l$ in $\R^2$ which are parallel to $v^+$; see Figure \ref{fig:Q_varepsilon}. Similarly, $\{av^+ + bv^- : a,b\in \R,\, |b|\leq\epsilon/\lambda\}$ is bounded by two lines $M^u$ and $M^l$ which are parallel to $v^+$. If the labels are chosen such that $M^l$ is between $L^l$ and $M^u$ and $M^u$ is between $M^l$ and $L^u$, then we may choose a line $N$ parallel to $v^+$ between $M^u$ and $L^u$. The line $N$ passes arbitrarily close to the integer lattice $\Z^2$ since it projects to a line on $T^2$ which is dense in $T^2$. If we choose a point $p$ of $\Z^2$ which is sufficiently close to $N$, then $p\in Q_\epsilon \setminus Q_{\epsilon/\lambda}$, as desired.

For the second statement, suppose without loss of generality that $0<\delta\leq\epsilon$. Then $Q_\delta\subset Q_\epsilon$, and so $[Q_\delta\cup \{t^{\pm 1}\}]\succcurlyeq [Q_\epsilon \cup \{t^{\pm 1}\}]$. Choose $n\in \N$ large enough that $\epsilon/\lambda^n\leq\delta$. Then we have $\phi^n(Q_\epsilon)\subset Q2_\delta$. In other words, $t^nQ_\epsilon t^{-n}\subset Q_\delta$, and thus every element of $Q_\epsilon$ has word length at most $ 2n+1$ with respect to the generating set $Q_\delta\cup \{t^{\pm 1}\}$. Therefore $[Q_\delta \cup \{t^{\pm 1}\}] \preccurlyeq [Q_\epsilon \cup \{t^{\pm 1}\}]$, and the result follows.
\end{proof}

We next show that for some (equivalently, any) $\varepsilon>0$, the action of $G$ on $\Gamma(G,Q_\varepsilon\cup\{t^{\pm 1}\})$ is equivalent to the  action  of $G$ on $\hyp^2$ defined as follows. Let $\pi\colon \Z^2\to \R$ be the homomorphism \[\pi:av^+ + bv^-\mapsto b.\] Consider the upper half-plane model of $\hyp^2$. The group $\Z^2$ admits a parabolic action on $\hyp^2$ via \[p \cdot z=z+\pi(p), \text{ for } p\in \Z^2 \text{ and } z\in \hyp^2.\] Let $t$ act loxodromically on $\hyp^2$ by $t\cdot z=\lambda^{-1} z$. For $p\in \Z^2$ we have \[\phi(p)\cdot z=z+\pi(\phi(p))=z+\lambda^{-1}\pi(p).\] Moreover, \[tpt^{-1}\cdot z=tp\cdot \lambda z=t\cdot (\lambda z+\pi(p))=z+\lambda^{-1}\pi(p).\] Therefore the actions of $\Z^2$ and $\langle t \rangle$ induce an action of $G$ on $\hyp^2$.

\begin{lem}
\label{lem:hypplane}
The action of $G$ on $\hyp^2$ is equivalent to the action of $G$ on $\Gamma(G,Q_\epsilon \cup \{t^{\pm 1}\})$ for some (hence any) $\epsilon>0$.
\end{lem}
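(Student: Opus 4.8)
The plan is to produce an explicit coarsely $G$--equivariant quasi-isometry between $\hyp^2$ (with the given action) and the Cayley graph $\Gamma(G,Q_\epsilon\cup\{t^{\pm 1}\})$; by \cite[Proposition~3.12]{abo} this is precisely what "equivalent actions" means, and by Lemma~\ref{equivsets} it suffices to treat one value of $\epsilon$. The only natural candidate is the orbit map $g\mapsto g\cdot i$ based at $i\in\hyp^2$, which is tautologically $G$--equivariant because $G$ acts on both sides. So the whole statement reduces to two things: coboundedness of $G\curvearrowright\hyp^2$ (coarse surjectivity of the orbit map), and the estimate that $d_{\hyp^2}(i,g\cdot i)$ and $\|g\|_{Q_\epsilon\cup\{t^{\pm1}\}}$ are each bounded by a uniform affine function of the other; write $A\asymp B$ for this relation below.

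First I would record the elementary facts about $\pi\colon\Z^2\to\R$, $\pi(av^++bv^-)=b$: it is the restriction of a linear functional, hence a homomorphism; it satisfies $\pi\circ\phi^n=\lambda^{-n}\pi$ because $\phi$ scales the $v^-$--coordinate by $\lambda^{-1}$; and its image $\pi(\Z^2)$ is a dense subgroup of $\R$. Density is exactly where the Anosov hypothesis is used: $\ker(\pi|_{\Z^2})=\Z^2\cap\R v^+$, and a nonzero integer vector parallel to $v^+$ would force $v^+$ to have rational (or vertical) slope and hence $\lambda\in\Z$, whence $\lambda=\pm1$, contradicting $\lambda\neq\lambda^{-1}$; so $\pi|_{\Z^2}$ is injective and $\pi(\Z^2)\cong\Z^2$ is a non-cyclic, hence dense, subgroup of $\R$. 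Coboundedness then follows quickly: $t$ acts loxodromically along the imaginary axis with translation length $\log\lambda$, so every $z=x+yi$ lies within $\log\lambda$ of the horizontal line through some $\lambda^{-k}i$, and density of $\pi(\Z^2)$ lets one translate horizontally to within hyperbolic distance $1$ of $z$; thus $G\cdot i$ is $(\log\lambda+1)$--dense.

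The heart of the proof is the distance estimate. Writing $g=p\,t^k$ with $p\in\Z^2$, $k\in\Z$, we get $g\cdot i=\pi(p)+\lambda^{-k}i$, so from $\cosh d_{\hyp^2}(i,x+yi)=\tfrac{x^2+y^2+1}{2y}$ one finds $d_{\hyp^2}(i,g\cdot i)\asymp\log\!\big(\pi(p)^2+\lambda^{-2k}+1\big)+k\log\lambda$, and a short case check (on the sign of $k$, and on whether $\log^+|\pi(p)|$ or $|k|\log\lambda$ dominates) shows this is $\asymp f(p,k):=|k|+\log^+|\pi(p)|$. It then remains to prove $\|g\|_{Q_\epsilon\cup\{t^{\pm1}\}}\asymp f(p,k)$. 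For the upper bound: since $\phi$ contracts the $v^-$--coordinate by $\lambda^{-1}$ and $\pi(p)$ is that coordinate, there is $m=O(\log^+|\pi(p)|+1)$ with $\phi^m(p)\in Q_\epsilon$, so $g=t^{-m}\,\phi^m(p)\,t^{m+k}$ has length $\le 2m+1+|k|\lesssim f(p,k)$. For the lower bound I would use two homomorphisms: the map $\tau\colon G\to\Z$ killing $\Z^2$ with $\tau(t)=1$ gives $\|g\|\ge|\tau(g)|=|k|$; and, given a word $g=s_1\cdots s_\ell$ in $Q_\epsilon\cup\{t^{\pm1}\}$, collecting all $t$'s to the right expresses $p$ as a product of conjugates $t^{\sigma_i}s_i t^{-\sigma_i}$ with $|\sigma_i|\le\ell$, so $|\pi(p)|=\big|\sum\lambda^{-\sigma_i}\pi(s_i)\big|\le\epsilon\,\ell\,\lambda^{\ell}$ and hence $\log^+|\pi(p)|\lesssim\ell=\|g\|$. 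Combining the two lower bounds gives $\|g\|\gtrsim f(p,k)$, and the orbit map is then the required equivariant quasi-isometry.

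I expect the main obstacle to be the bookkeeping in the lower bound $\|g\|\gtrsim\log^+|\pi(p)|$ — tracking the running $t$--exponents $\sigma_i$ in the collecting process and justifying $|\sigma_i|\le\ell$ — together with the fussy (but routine) case analysis identifying $d_{\hyp^2}(i,g\cdot i)$ with $f(p,k)$; everything else (the homomorphism lemmas, coboundedness, the upper bound via the confining property) is straightforward. Alternatively, one could note that this is an instance of the Caprace--Cornulier--Monod--Tessera correspondence between strictly confining subsets and quasi-parabolic actions (compare Lemma~\ref{lem:qpconfining} and \cite[Theorems~4.4~\&~4.5]{ccmt}), but the direct computation above is short enough to carry out in full.
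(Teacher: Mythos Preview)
Your proposal is correct. The paper takes a slightly different and somewhat shorter route: rather than computing both $d_{\hyp^2}(i,g\cdot i)$ and $\|g\|_{Q_\epsilon\cup\{t^{\pm1}\}}$ explicitly and showing each is $\asymp |k|+\log^+|\pi(p)|$, the paper applies the Schwarz--Milnor Lemma (\cite[Lemma~3.11]{abo}) directly. Coboundedness (which you also establish) gives for free that $G\curvearrowright\hyp^2$ is equivalent to $G\curvearrowright\Gamma(G,S)$ where $S=\{g:d(i,gi)\le 2\log\lambda+1\}$, and then the only remaining work is the comparison of generating sets $[S]=[Q_\epsilon\cup\{t^{\pm1}\}]$, which amounts to two \emph{bounded} checks rather than a global two-sided estimate. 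In particular the paper never needs your ``collecting $t$'s'' lower bound on word length or the case analysis on the $\cosh$ formula: Schwarz--Milnor absorbs that part of the quasi-isometry. Your direct computation is more hands-on and gives an explicit formula for both metrics, which is informative; the paper's approach trades that explicitness for brevity.
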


\begin{proof}
We apply the Schwarz-Milnor Lemma (\cite[Lemma~3.11]{abo}). Choose as a basepoint $i\in \hyp^2$. We claim that the $G$--translates of the ball $B=B_{\log(\lambda)}(i)$ cover $\hyp^2$. To see this, note that since $\R v^+$ passes arbitrarily close to the integer lattice $\Z^2$, the image $\pi(\Z^2)$ is dense in $\R$. Moreover, $t^n$ translates the horocycle $\{z\in \hyp^2 : \operatorname{Im}(z)=1\}$ to the horocycle $\{z\in \hyp^2: \operatorname{Im}(z)=\lambda^{-n}\}$ for any $n\in \Z$. Thus, for any $n\in \Z$, the orbit of $i$ is dense in the horocycle $\{z\in \hyp^2: \operatorname{Im}(z)=\lambda^{-n}\}$. These horocycles are spaced at distances exactly $\log(\lambda)$ apart. Hence, we easily see that the balls of radius $\log(\lambda)$ based at points in the orbit of $i$ cover $\hyp^2$.

By \cite[Lemma~3.11]{abo}, the action of $G$ on $\hyp^2$ is equivalent to the action of $G$ on $\Gamma(G,S)$ where \[S=\{g\in G : d(i,gi)\leq 2\log(\lambda)+1\}.\] Hence it remains to show that $[S]=[Q_\epsilon \cup \{t^{\pm 1}\}]$ for some $\epsilon$.

Suppose that $g\in S$, so that $g$ translates $i$ a distance of at most $2\log(\lambda)+1$. Writing $g=pt^n$ where $p\in \Z^2$ and $n\in \Z$, we have \[gi=\lambda^{-n}i+\pi(p).\] Hence, $gi$ lies on the horocycle $\{z\in \hyp^2:\operatorname{Im}(z)=\lambda^{-n}\}$. Since this horocycle has distance $|n|\log(\lambda)$ from $\{z\in \hyp^2:\operatorname{Im}(z)=1\}$, we must have \[|n|\log(\lambda)\leq 2\log(\lambda)+1,\] and therefore $|n|\leq \frac{2\log(\lambda)+1}{\log(\lambda)}$. We also have \[d(i,gi)=2\operatorname{arcsinh}\left( \frac{1}{2} \sqrt{\frac{\pi(p)^2+(\lambda^{-n}-1)^2}{\lambda^{-n}}}\right) \geq 2\operatorname{arcsinh} \left(\frac{1}{2} \sqrt{\frac{\pi(p)^2}{\lambda^{-n}}}\right).\] Since $\operatorname{arcsinh}$ is an increasing function and $|n|$ is bounded, this clearly defines an upper bound on $|\pi(p)|$. Set $\epsilon$ to be this upper bound. Since $|\pi(p)|\leq \epsilon$, we have $p \in Q_\epsilon$. Hence the word length of $g$ with respect to $Q_\epsilon \cup \{t^{\pm 1}\}$ is at most $1+\frac{2\log(\lambda)+1}{\log(\lambda)}$. This proves $[S] \succcurlyeq [Q_\epsilon \cup \{t^{\pm1}\}]$.

We now turn our attention to the other inequality.  Given $g\in Q_\epsilon \cup \{t^{\pm 1}\}$, we must consider two cases. If $g=t^{\pm 1}$ then $d(i,gi)=\log(\lambda)$, and therefore $g\in S$. On the other hand, if $g=p\in Q_\epsilon$ then we have \[d(i,pi)=2\operatorname{arcsinh} \left(\frac{1}{2} |\pi(p)|\right)\leq 2\operatorname{arcsinh} \left(\frac{1}{2} \epsilon\right).\] Letting $n$ be large enough so that $2\operatorname{arcsinh}\left(\frac{1}{2}\epsilon / \lambda^n \right)<2\log(\lambda)+1$, we have \[d(i,\phi^n(p)i)=2\operatorname{arcsinh} \left(\frac{1}{2}|\pi(p)/\lambda^n|\right)\leq 2\operatorname{arcsinh}\left(\frac{1}{2} \epsilon /\lambda^n\right)<2\log(\lambda)+1.\] Thus, $\phi^n(p)\in S$. As we already showed that $t^{\pm 1}\subset S$, it follows that $p=t^{-n}\phi^n(p)t^n$ has word length at most $2n+1$ with respect to $S$. This proves $[Q_\epsilon \cup \{t^{\pm 1}\}]\succcurlyeq [S]$.
\end{proof}

For the proof of the next lemma, denote by $\rho\colon\Z^2\to \R$ the homomorphism $\rho\colon av^++bv^-\mapsto a$.

\begin{lem}
\label{lem:containnbhd}
Let $Q\subset \Z^2$ be confining under the action of $\phi$. Then for some (hence any) $\epsilon>2$ we have \[[Q_\epsilon \cup \{t^{\pm 1}\}] \succcurlyeq [Q \cup \{t^{\pm 1}\}].\]
\end{lem}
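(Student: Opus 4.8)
In $G=\Z^2\rtimes_\phi\Z$ every element has a normal form $pt^n$ with $p\in\Z^2$, $n\in\Z$, and $t^{-n}pt^n=\phi^{-n}(p)$. Hence if $p\in Q_\epsilon$ and $\phi^K(p)\in Q$, then $p=t^{-K}\phi^K(p)t^K$ has $Q\cup\{t^{\pm1}\}$--word length at most $2K+1$. Since $Q$ generates $\Z^2$ (by axiom (b) the increasing union $\bigcup_k\phi^{-k}(Q)$ is all of $\Z^2$, so some generating set $\{\phi^{K_0}(e_1),\phi^{K_0}(e_2)\}$ lies in $Q$), we have $G=\langle Q\cup\{t^{\pm1}\}\rangle$, so every element of $G$ already has finite word length; thus it suffices to find a single $K$ with $\phi^K(p)\in Q$ for all but finitely many $p\in Q_\epsilon$, taking the maximum word length over the finite exceptional set as well. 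Because $\phi\in\SL(2,\Z)$ preserves $\Z^2$ and acts on eigenbasis coordinates by $(a,b)\mapsto(\lambda a,\lambda^{-1}b)$, one computes $\phi^K(Q_\epsilon)=Q_{\epsilon/\lambda^K}$. So the lemma reduces to the claim that $Q$ contains some strip $Q_\delta$ with $\delta>0$ (then pick $K$ with $\epsilon/\lambda^K\le\delta$); the asserted inequality $[Q_\epsilon\cup\{t^{\pm1}\}]\succcurlyeq[Q\cup\{t^{\pm1}\}]$ then follows immediately from the definitions of $\preceq$ and $\preccurlyeq$ in Section~\ref{sec:hypstructures}.

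\textbf{Key claim: a $\phi$--confining symmetric $Q\subseteq\Z^2$ contains $Q_\delta$ for some $\delta>0$}, i.e.\ $Q$ coarsely contains a neighborhood of $\R v^+$. The available inputs are: (i) by (a) and (b), $Q$ contains the forward $\phi$--orbit of any of its elements, and since $Q$ contains an element $q$ with $\rho(q)\neq 0$, the points $\phi^n(q)$ have $\pi$--coordinate $\lambda^{-n}\pi(q)\to 0$ and $|\rho|$--coordinate $\lambda^n|\rho(q)|\to\infty$, so $Q$ contains lattice points with arbitrarily large $\rho$ and arbitrarily small $\pi$; (ii) iterating (c) gives $\underbrace{Q+\dots+Q}_{m}\subseteq\phi^{-\lceil\log_2 m\rceil k_0}(Q)$, so $Q$ is closed, up to a bounded power of $\phi$, under bounded sums; (iii) since $\phi$ is Anosov the slope of $v^+$ is a quadratic irrational, so $\R v^+$ projects to a dense line in $\R^2/\Z^2$, whence for $\delta$ above an explicit lattice constant (this is exactly where the hypothesis $\epsilon>2$ is used) the set $\Z^2\cap\{|\pi|\le\delta\}$ is coarsely dense along $\R v^+$ with a uniform gap bound. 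The plan is to feed (i) into (ii), using the coarse additivity to reach, from the sparse orbit points $\phi^n(q)\in Q$ on $\R v^+$, every lattice point in a fixed-width neighborhood of $\R v^+$ whose $\rho$--coordinate is sufficiently large, and then to absorb the finitely many remaining lattice points of the strip near the origin into $Q$ by applying a bounded power of $\phi$ and invoking (b) once more.

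\textbf{Main obstacle.} I expect the key claim --- concretely, the ``filling in'' of the strip --- to be the hard part: it amounts to showing that $\phi$--confining subsets of $\Z^2$ are coarsely the strips $Q_\delta$. The subtlety is that the naive iteration of (c) applied to a single orbit $\{\phi^n(q)\}$ lands one in ever \emph{thinner} strips rather than a fixed-width one, so the argument must genuinely be driven by the exhaustion axiom (b) (every forward $\phi$--orbit meets $Q$), combining it with (c) and the equidistribution fact (iii); keeping the bookkeeping uniform in the target lattice point is the delicate point. Everything after the key claim --- choosing $K$, controlling the finite exceptional set, and assembling $\sup_{g\in Q_\epsilon\cup\{t^{\pm1}\}}\|g\|_{Q\cup\{t^{\pm1}\}}<\infty$ --- is routine.
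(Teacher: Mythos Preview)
Your reduction is correct, and the ``key claim'' ($Q_\delta\subset Q$ for some $\delta>0$) is in fact true---but you have not proved it, and proving it is precisely where all the work lies. Your sketched plan is too vague to count as a proof, and it contains a slip: ``absorbing the finitely many remaining lattice points of the strip near the origin into $Q$ by applying a bounded power of $\phi$ and invoking (b)'' only yields $\phi^k(p)\in Q$, not $p\in Q$. (This slip is harmless for the \emph{lemma}, since a finite exceptional set has bounded word length regardless; but it means you are not actually establishing the key claim as you stated it.) The substantive difficulty---producing, from a single $u\in Q$ and the axioms, a supply of elements of $Q$ (or of bounded $Q$-sums) whose $\rho$-projections are coarsely dense in $\R$---is not addressed by your outline.

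The paper does not argue via your key claim. Instead it shows directly that every $g\in Q_{r\epsilon}$ has bounded $Q$-word length, without ever arguing that such $g$ lies in $Q$. Concretely: fix $u\in Q\setminus\{0\}$, set $\epsilon=|\pi(u)|$ and $a=\rho(u)$, and build $P\subset Q\cap Q_\epsilon$ as the smallest symmetric set containing $\{0,\pm u\}$ and closed under $(x,y)\mapsto\phi^n(x+y)$ (where $n$ is fixed with $\phi^n(Q+Q)\subset Q$ and $\lambda^n>2$). The technical heart is an inductive argument (Claim~\ref{denseprojs}) that $\rho(rP)$ is $(\lambda^n a)$-dense in $\R$ for $r=\lfloor\lambda^n\rfloor$. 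Given this, any $g\in Q_{r\epsilon}$ lies within a fixed $\R^2$-distance of some $h\in rP\subset rQ$, so $g-h$ ranges over a fixed finite subset of $\Z^2$ and $\|g\|_Q\le r+N$ uniformly. Your key claim can actually be \emph{recovered} from this bound by one further use of axiom (c): iterating $\phi^{k_0}(Q+Q)\subset Q$ gives $\phi^{Jk_0}\bigl((r+N)Q\bigr)\subset Q$ once $2^J\ge r+N$, whence $Q_{r\epsilon/\lambda^{Jk_0}}=\phi^{Jk_0}(Q_{r\epsilon})\subset Q$. So your route and the paper's are closer than they first appear; but the coarse-density step you defer is exactly the crux, and nothing in your outline substitutes for it.
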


\begin{proof}
Choose $n$ large enough that $\phi^n(Q+Q)\subset Q$ and $\lambda^n>2$; we fix this $n$ for the remainder of the proof.  Fix $u\in Q \setminus \{0\}$ and set $\epsilon=|b|$ where $u=av^++bv^-$. After possibly replacing $u$ by $-u$ we may assume $a>0$.

We define a sequence of subsets of $Q$ as follows. First of all define $P_0=\{\pm u \} \cup \{0\}$. Having defined $P_i$, we define $P_{i+1}$ inductively by $P_{i+1}=P_i \cup \phi^n(P_i+P_i)$. If we assume for induction that $P_i\subset Q$, we have $\phi^n(P_i+P_i)\subset \phi^n(Q+Q)\subset Q$ and therefore $P_{i+1}\subset Q$. Thus the union $P=\bigcup_{i=0}^\infty P_i\subset Q$. Moreover, $P$ is closed under the action of $\phi^n$ and we have $\phi^n(P+P)\subset P$.

Another induction argument shows that $P$ is contained in $Q_\epsilon$.  It is clear that $P_0\subset Q_\epsilon$. If we suppose for induction that $P_i\subset Q_\epsilon$ and $x,y\in P_i$, then we have $x=cv^+ + dv^-$ and $y=ev^++fv^-$, where $d$ and $f$ both have absolute value at most $\epsilon$. Thus $\phi^n(x+y)=\lambda^n(c+e)v^+ + \lambda^{-n}(d+f) v^-$. Since $\lambda^n>2$, we have  $|\lambda^{-n}(d+f)|< \frac{1}{2}(|d|+|f|)<\epsilon$. Therefore $P_{i+1}\subset Q_\epsilon$ as well.

Set $a=\rho(u)$ and for $r\in \N$ write $rP$ for the words in $P$ of length at most $r$.  Recall that a subset $S\subset \R$ is $R$--dense if for any $x\in \R$ there exists $s\in S$ with $|x-s|\leq R$.

\begin{claim} 
\label{denseprojs}
There exists $r>0$ such that $\rho(rP)$ is $(\lambda^n a)$--dense in $\R$.
\end{claim}

Before proving the claim, we show how it implies the lemma. Note that $rP$ is contained in $Q_{r\epsilon}$. We now claim that every element of $Q_{r\epsilon}$ may be written as a word in $Q$ of bounded word length. To see this, consider an element $g\in Q_{r\epsilon}$ and its projection $\rho(g)$ to $\R$. By Claim \ref{denseprojs}, there is an element $h\in rP$ with $|\rho(g)-\rho(h)|\leq \lambda^n a$. Furthermore, we have $d(g,\rho(g)v^+)\leq r\epsilon\|v^-\|$ and $d(h,\rho(h)v^+)\leq r\epsilon\|v^-\|$ (where $d(\cdot,\cdot)$ denotes distance in $\R^2$). Therefore  \[d(g,h)\leq d(g,\rho(g)v^+)+d(\rho(g)v^+,\rho(h)v^+)+d(\rho(h)v^+,h)\leq r\epsilon\|v^-\| + \lambda^n a+r\epsilon\|v^-\|=2r\epsilon\|v^-\|+\lambda^n a.\] Since $Q$ generates $\Z^2$, we may choose $N$ to be an upper bound on the word length in $Q$ of any element of $S=\Z^2\cap B_{2r\epsilon\|v^-\|+\lambda^n a}(0)$ (where $B_{2r\epsilon\|v^-\|+\lambda^n a}(0)$ is the ball of radius $2r\epsilon\|v^-\|+\lambda^n a$ centered at 0 in $\R^2$). Then since $d(g,h)\leq 2r\epsilon\|v^-\| +\lambda^na$, there exists $k\in S$ with $h+k=g$. We have \[||g||_Q\leq ||h||_Q+||k||_Q \leq r+N.\]

\begin{proof}[Proof of Claim \ref{denseprojs}]
Since $a=\rho(u)$, we have $\lambda^n a=\rho(\phi^n(u))$. Choose $r$ to be the largest integer with $r\lambda^n a<\lambda^{2n}a$; that is, $r=\lfloor \lambda^n \rfloor$. Then \[|\rho(r\phi^n(u)) - \rho(\phi^{2n}(u))| =  |r\lambda^na - \lambda^{2n}a| < \lambda^n a.\] 

We will show that the following hold for each $i\in \N$:
\begin{itemize}
\item $\rho(rP)$ intersects each interval $[\lambda^{in}a,\lambda^{(i+1)n}a]$ in a $(\lambda^n a)$--dense subset, and
\item $\rho(rP)$ contains the endpoints $\lambda^{in}a$ and $\lambda^{(i+1)n}a$.
\end{itemize}

Since $\rho(rP)$ is a symmetric subset of $\R$, this will prove that $\rho(rP)$ is $(\lambda^n a)$--dense in $\R$, as desired. The second claim is trivial since $\rho(\phi^{in}(u))=\lambda^{in}\rho(u)=\lambda^{in}a$ for any $i$. The first claim is proven by induction.

For the base case $i=1$, note that for $1\leq j\leq r$ we have $j\lambda^n a=\rho(j\phi^n(u))$. Since $j\leq r$ we have $j\phi^n(u)\in rP$. Since $|\lambda^{2n}a-r\lambda^n a|\leq \lambda^n a$, this proves that $\rho(rP)$ intersects $[\lambda^na,\lambda^{2n}a]$ in a $(\lambda^n a)$--dense set, as desired.

Now suppose for induction that we have a sequence $v_0,v_1, \ldots, v_t$ of elements of $rP$ with $\rho(v_0)=\lambda^{(i-1)n}a$, $\rho(v_t)=\lambda^{in}a$, and $|\rho(v_j)-\rho(v_{j+1})|\leq \lambda^n a$ for each $j$. We may apply $\phi^n$ to each element of the sequence. We have that $\rho(\phi^n(v_0))=\lambda^{in}a$, $\rho(\phi^n(v_t))=\lambda^{(i+1)n}a$, and, for each $i$, \[|\rho(\phi^n(v_j))-\rho(\phi^n(v_{j+1}))|=\lambda^n |\rho(v_j)-\rho(v_{j+1})|\leq \lambda^{2n}a.\] Writing $b_j=\rho(v_j)$ and $b_{j+1}=\rho(v_{j+1})$, we have $\rho(\phi^n(v_j))=\lambda^n b_j$ and $\rho(\phi^n(v_{j+1}))=\lambda^n b_{j+1}$.

Since these numbers are distance at most $\lambda^{2n}a$ apart, we have $\lambda^nb_j+(r+1)\lambda^na \geq \lambda^nb_{j+1}$. Hence if $s$ is the largest integer such that $\lambda^nb_j+s\lambda^n a<\lambda^nb_{j+1}$, then $s\leq r$ and between $\lambda^n b_j$ and $\lambda^n b_{j+1}$ there is a sequence of numbers \[\lambda^n b_j, \ \ \lambda^n b_j +\lambda^n a, \ \  \lambda^n b_j +2\lambda^n a, \ \ \ldots,  \ \ \lambda^n b_j +s\lambda^n a, \ \ \lambda^nb_{j+1}\] spaced at distances at most $\lambda^n a$ apart. Furthermore, for each $k$ between $0$ and $s$, we have \[\lambda^nb_j+k\lambda^n a=\rho(\phi^n(v_j))+k\rho(\phi^n(u))=\rho(\phi^n(v_j)+k\phi^n(u))=\rho(\phi^n(v_j+ku)).\] The element $v_j$ has word length at most $r$ with respect to $P$, and  $k\leq s\leq r$. Write $v_j$ as a word $g_1+\ldots+g_r$ where each $g_*$ lies in $P$ (with some possibly equal to 0). Then we have \[\phi^n(v_j+ku)=\phi^n(g_1+u)+\phi^n(g_2+u)+\cdots +\phi^n(g_k +u) +\phi^n(g_{k+1})+\cdots +\phi^n(g_r).\] Since $\phi^n(P+P)\subset P$, each term in this sum lies in $P$. Therefore $\phi^n(v_j+ku)\in rP$. Hence each number $\lambda^n b_j+k\lambda^n a$ where $k\leq s$ lies in $\rho(rP)$. This proves that $\rho(rP)$ intersects $[\lambda^nb_j,\lambda^nb_{j+1}]$ in a set of numbers spaced at distances at most $\lambda^na$ apart and containing both the endpoints. Since $0\leq j<t$ was arbitrary, this proves that $\rho(rP)$ intersects the entire interval $[\lambda^{in}a,\lambda^{(i+1)n}a]$ in a set of numbers spaced at distances at most $\lambda^na$ apart and containing both of the endpoints $\lambda^{in}a$ and $\lambda^{(i+1)n}a$. This completes the inductive step, and the proof of the claim.
\end{proof}
\end{proof}

\begin{lem}
\label{lem:unbounded}
Let $Q\subset \Z^2$ be confining under the action of $\phi$. Suppose that $\{b\in \R:av^++bv^-\in Q \textit{ for some } a\in \R\}=\pi(Q)$ is unbounded. Then $[Q\cup \{t^{\pm 1}\}]=[\Z^2 \cup \{t^{\pm 1}\}]$.
\end{lem}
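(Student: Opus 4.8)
The plan is to prove the two inequalities $[Q\cup\{t^{\pm 1}\}]\succcurlyeq[\Z^2\cup\{t^{\pm 1}\}]$ and $[Q\cup\{t^{\pm 1}\}]\preccurlyeq[\Z^2\cup\{t^{\pm 1}\}]$ separately. The first is immediate: since $Q\subseteq\Z^2$, every element of $Q\cup\{t^{\pm 1}\}$ has word length at most $1$ with respect to $\Z^2\cup\{t^{\pm 1}\}$, i.e. $\Z^2\cup\{t^{\pm 1}\}\preceq Q\cup\{t^{\pm 1}\}$. For the second it suffices to bound $\sup_{p\in\Z^2}\|p\|_{Q\cup\{t^{\pm 1}\}}$: the generators $t^{\pm 1}$ have length $1$ in both word metrics, so a uniform bound on $\|p\|_{Q\cup\{t^{\pm 1}\}}$ over $p\in\Z^2$ yields $\|g\|_{Q\cup\{t^{\pm 1}\}}\leq C\|g\|_{\Z^2\cup\{t^{\pm 1}\}}$ for all $g\in G$.

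I would then reduce this bound to a combinatorial statement about $Q$. Collecting the $\Z^2$-syllables of a word in $Q\cup\{t^{\pm 1}\}$ to the right shows that $p\in\Z^2$ satisfies $\|p\|_{Q\cup\{t^{\pm 1}\}}\leq L$ precisely when $p=\sum_{j}\phi^{c_j}(q_j)$ with at most $L$ summands, $q_j\in Q$, and $|c_j|\leq L$; conversely such an expression gives $\|p\|_{Q\cup\{t^{\pm 1}\}}\leq L(2L+1)$. Since $\phi(Q)\subseteq Q$, all sets $\phi^{c}(Q)$ with $c\geq -C$ lie inside $\phi^{-C}(Q)$, and $\phi^{-C}$ is a group automorphism of $\Z^2$; hence such a uniform bound exists if and only if there is $N$ with $\Z^2=\underbrace{Q+\cdots+Q}_{N}$ (using $0\in Q$, so smaller sumsets are included). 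It is convenient to push this through $\pi$. Because $\Ker\pi\cap\Z^2=\R v^+\cap\Z^2=\{0\}$, the map $\pi$ restricts to an \emph{injective} homomorphism of $\Z^2$ onto a dense rank-two subgroup $\Lambda:=\pi(\Z^2)\subset\R$, and $\pi\circ\phi=\lambda^{-1}\cdot\pi$. So the problem becomes: show that the symmetric set $\mathcal Q:=\pi(Q)\subseteq\Lambda$, which satisfies $\lambda^{-1}\mathcal Q\subseteq\mathcal Q$, $\bigcup_{k\geq 0}\lambda^{k}\mathcal Q=\Lambda$, $\lambda^{-k_0}(\mathcal Q+\mathcal Q)\subseteq\mathcal Q$, and is \emph{unbounded}, has $\underbrace{\mathcal Q+\cdots+\mathcal Q}_{N}=\Lambda$ for some $N$.

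To do this I would combine two ingredients. First, the proof of Lemma \ref{lem:containnbhd} in fact shows that every element of the strip $Q_{r\epsilon}$ lies in $(r+N')Q$; since $\pi(Q)$ is unbounded, the starting element $u$ in that argument may be chosen with $\pi(u)$ arbitrarily large, so for every $W>0$ there is $N_W$ with $Q_W\subseteq N_W Q$ (and, taking $W$ large, $\Z^2\cap\{|\pi|\leq W\}$ is $D$-dense in its strip for an absolute $D$, by Lemma \ref{equivsets} and equidistribution). Second, $\lambda^{-1}\mathcal Q\subseteq\mathcal Q$ together with unboundedness forces the $\pi$-values of $Q$ to meet every interval $[s,\lambda s]$ (push a large element of $\mathcal Q$ down by powers of $\lambda$). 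A greedy approximation — peel off a $Q$-element whose $\pi$-value lies just below the target and correct the $v^+$-coordinate using the strip — reduces a general lattice point to one in a strip, but in $\asymp\log$ many steps. The crux, and the part I expect to be hardest, is to replace this logarithmic bound by a uniform one: here one must exploit $\lambda^{-k_0}(\mathcal Q+\mathcal Q)\subseteq\mathcal Q$ (cheap doubling) to prove, by a self-improving estimate in the spirit of Claim \ref{denseprojs}, that a \emph{fixed-order} sumset of $\mathcal Q$ is already $R$-dense in $\R$ for some fixed $R$ — equivalently, that a fixed-order sumset of $Q$ contains a fixed $\R^2$-neighborhood of every lattice point. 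Given that, absorbing the bounded error (using that $Q$ generates $\Z^2$, so bounded subsets of $\Z^2$ have bounded length in $Q$) yields $NQ=\Z^2$ and finishes the proof.
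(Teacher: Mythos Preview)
Your outline is correct and matches the paper's approach: reduce to showing that some fixed-order sumset $rQ$ (with $r=\lfloor\lambda^n\rfloor$) has $\pi$-image which is $1$-dense in $\R$, then use Lemma~\ref{lem:containnbhd} together with Lemma~\ref{equivsets} to absorb the remaining element of $Q_1$. You correctly flag the density claim as the crux and correctly anticipate its method --- the paper carries out exactly the self-improving induction you describe, starting from an element $g\in Q$ with $d=|\pi(g)|$ arbitrarily large (using the unboundedness hypothesis) and proving $\pi(rQ)$ is $(\lambda^{-kn}d)$-dense on $[0,\lambda^{-kn/2}d]$, where $k$ is chosen so that $\lambda^{-kn}d<1$.
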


\begin{proof}
Let $n$ be large enough that $\phi^n(Q+Q)\subset Q$; we fix this $n$ for the remainder of the proof. Denote $r=\lfloor \lambda^n \rfloor$. We claim that $\pi(rQ)$ is $1$--dense in $\R$, where $rQ$ denotes the set of words of length at most $r$ in the elements of $Q$. Before proving the claim, we show how it proves the lemma. By Lemmas \ref{lem:containnbhd} and \ref{equivsets}, there exists an upper bound $N$ on the word length of any element of $Q_1$ with respect to $Q$ (here $Q_1$ is the set $Q_\epsilon$ with $\epsilon=1$). Suppose that $g=cv^++dv^-\in rQ$. Then $g+Q_1=\{av^++bv^-:|b-d|\leq 1\}$. Since the set $\pi(rQ)$ is 1--dense in $\R$, this proves that every element of $\Z^2$ lies in $g+Q_1$ for some $g\in rQ$, and therefore every element of $\Z^2$ has word length at most $r+N$ with respect to $Q$.

Now we prove the claim. Let $g=cv^++dv^-\in Q$. Since $Q$ is symmetric, we may suppose without loss of generality that $d>0$. Let $k$ be the smallest integer with $\lambda^{-kn}d<1$. We claim that $\pi(rQ)$ is $(\lambda^{-kn}d)$--dense in the interval $[0,\lambda^{-kn/2}d]$. Since $\lambda^{-kn}d<1$, this will prove that $\pi(rQ)$ is in fact 1--dense in the interval $[0,\lambda^{-kn/2}d]$. Since $d$ may be taken to be arbitrarily large, the number $\lambda^{-kn/2}d$ may be taken to be arbitrarily large. This implies that $\pi(rQ)$ is 1--dense in $\R_{>0}$. Since $Q$ is symmetric, $\pi(rQ)$ will actually be 1--dense in all of $\R$, and this will complete the proof.

Thus we now show that $\pi(rQ)$ is $(\lambda^{-kn}d)$--dense in the interval $[0,\lambda^{-kn/2}d]$. The proof is similar to the proof of Claim \ref{denseprojs} and proceeds by induction. For the base case, note that all of the points  \[\lambda^{-kn}d=\pi(\phi^{kn}(g)),\ \ 2\lambda^{-kn}d, \ \ \ldots, \ \ r\lambda^{-kn}d\] lie in $\pi(r\phi^{kn}(Q))$, and they form a $(\lambda^{-kn}d)$--dense subset of $[0,\lambda^{-(k-1)n}d]$ since \[|\lambda^{-(k-1)n}d-r\lambda^{-kn}d|\leq |\lambda^{-(k-1)n}d-(\lambda^n-1)\lambda^{-kn}d|=\lambda^{-kn}d.\] In particular, these points are $(\lambda^{-kn}d)$--dense in $[\lambda^{-kn}d,\lambda^{-(k-1)n}d]$. 

For induction, suppose that for some $0\leq i\leq k/2-1$, we have a sequence of points \[b_0=\lambda^{-(k-i)n}d<b_1<b_2<\cdots<b_s\] in $[\lambda^{-(k-i)n}d,\lambda^{-(k-i-1)n}d]$ which all lie in $\pi(r\phi^{(k-2i)n}(Q))$ and are spaced at most $\lambda^{-kn}d$ apart. We wish to show that points of $\pi(r\phi^{(k-2i-2)n}(Q))$ are $(\lambda^{-kn}d)$--dense in $[\lambda^{-(k-i-1)n}d,\lambda^{-(k-i-2)n}d]$.

We have that \[\lambda^nb_0=\lambda^{-(k-i-1)n}d, \ \ \lambda^nb_1, \ \ \ldots, \ \ \lambda^nb_s\] all lie in $[\lambda^{-(k-i-1)n}d,\lambda^{-(k-i-2)n}d]$ and are points of $\pi(r\phi^{(k-2i-1)n}(Q))$. Namely, for each $j$, we may write $b_j=\pi(v_j)$ where $v_j \in r\phi^{(k-2i)n}(Q)$. We then have \[\lambda^nb_j=\pi(\phi^{-n}(v_j))\in\pi\left(\phi^{-n}(r\phi^{(k-2i)n}(Q))\right)=\pi\left( r\phi^{(k-2i-1)n}(Q)\right).\]

For each $j<s$, we have $|\lambda^n b_{j+1}-\lambda^n b_j|\leq \lambda^n \cdot \lambda^{-kn}d=\lambda^{-(k-1)n}d$. We consider the set of points \[A=\left\{\lambda^nb_j, \ \ \lambda^nb_j+\lambda^{-kn}d, \ \ \lambda^nb_j+2\lambda^{-kn}d, \ \  \ldots, \ \ \lambda^nb_j+r\lambda^{-kn}d\right\}.\] We claim that $A\subset \pi\left(r\phi^{(k-2i-2)n}(Q)\right)$. We have that \[\lambda^nb_j+(r+1)\lambda^{-kn}d\geq \lambda^n b_j +\lambda^{-(k-1)n}d \geq \lambda^nb_{j+1},\] so by this claim we  obtain a subset of $[\lambda^nb_j,\lambda^nb_{j+1}]$ consisting of points of $\pi(r\phi^{(k-2i-2)n}(Q))$ spaced at most $\lambda^{-kn}d$ apart, by considering the points $A\cap [\lambda^nb_j,\lambda^nb_{j+1}]$.

Write $v_j=h_1+\ldots+h_r$, where each $h_*\in \phi^{(k-2i)n}(Q)$ (some possibly equal to 0). Then for $l\leq r$ we have \[\lambda^nb_j+l\lambda^{-kn}d=\pi\left(\phi^{-n}(h_1)+\cdots+\phi^{-n}(h_r)+\underbrace{\phi^{kn}(g)+\cdots+\phi^{kn}(g)}_{l \text{ times}}\right).\] The expression inside $\pi$ is equal to \[\phi^{-2n}\left(\phi^n\left(h_1+\phi^{(k+1)n}(g)\right)+\cdots+\phi^n\left(h_l+\phi^{(k+1)n}(g)\right)+\phi^n(h_{l+1})+\cdots+\phi^n(h_r)\right).\] Since $\phi^n(Q+Q)\subset Q$, we also have $\phi^n\left(\phi^{(k-2i)n}(Q)+\phi^{(k-2i)n}(Q)\right)\subset \phi^{(k-2i)n}(Q),$ and therefore \[\phi^n\left(h_1+\phi^{(k+1)n}(g)\right),\ldots,\phi^n\left(h_l+\phi^{(k+1)n}(g)\right)\in \phi^{(k-2i)n}(Q).\] (Note that $\phi(Q)\subset Q$ implies that $\phi^{(k+1)n}(g)\in \phi^{(k-2i)n}(Q)$). Again using the fact that $\phi(Q)\subset Q$, we see that $\phi^n(h_{l+1}),\ldots, \phi^n(h_r)\in\phi^{(k-2i)n}(Q)$. Thus, finally, $\lambda^nb_j +l\lambda^{-kn}d\in \pi(r\phi^{(k-2i-2)n}(Q))$. 

Similarly, we may find points of $\pi(r\phi^{(k-2i-2)n}(Q))$ which form a $(\lambda^{-kn}d)$--dense subset of $[\lambda^nv_s,\lambda^{-(k-i-2)n}d]$. Thus, points of $\pi(r\phi^{(k-2i-2)n}(Q))$ form a $(\lambda^{-kn}d)$--dense subset of $[\lambda^{-(k-i-1)n}d,\lambda^{-(k-i-2)n}d]$. This completes the induction step  and the proof of the lemma.
\end{proof}

We are now ready to prove Theorem \ref{anosovposet}.
\begin{proof}[Proof of Theorem \ref{anosovposet}]
Since $G$ is solvable, it has no free subgroups, and therefore $\mc H_{gt}(G)=\emptyset$.  Thus it remains to describe the quasi-parabolic and lineal hyperbolic structures of $G$.

Note that the structure $[\Z^2 \cup \{t^{\pm 1}\}]$ is lineal, corresponding to the action of $G$ on $\R$ by translation. Namely $G$ admits a homomorphism $f:G\to \Z$ defined by taking the quotient by $\Z^2$, and the action on $\R$ is given by $g\cdot x= x+f(g)$.

Suppose that $[S]\in \H_{qp}(G)$. By Lemma \ref{lem:qpconfining} there is $Q\subset \Z^2$ which is confining under the action of $\phi$ or $\phi^{-1}$ such that $[S]=[Q\cup\{t^{\pm 1}\}]$. Suppose that $Q$ is confining under the action of $\phi$. Then  Lemma \ref{lem:containnbhd} implies that $[Q_\epsilon \cup \{t^{\pm 1}\}]\succcurlyeq [Q\cup \{t^{\pm 1}\}]$ (for any $\epsilon>0$). If $[Q\cup \{t^\pm 1\}]\neq[Q_\epsilon \cup \{t^{\pm 1}\}]$, then $[Q\cup \{t^{\pm 1}\}]=[\Z^2\cup \{t^{\pm 1}\}]$ by Lemma \ref{lem:unbounded}. However, this contradicts that $[Q\cup \{t^{\pm 1}\}]=[S]\in \H_{qp}(G)$. Hence $[Q\cup \{t^{\pm 1}\}]=[Q_\epsilon \cup \{t^{\pm 1}\}]$.

On the other hand, if $\H_{qp}(G)\ni [S]=[Q\cup \{t^{\pm 1}\}]$ where $Q$ is confining under the action of $\phi^{-1}$, then the same arguments show that $[Q\cup \{t^{\pm 1}\}]=[Q_\epsilon^-\cup \{t^{\pm 1}\}]$, where \[Q_\epsilon^-=\{av^++bv^- \in \Z^2 : |a|<\epsilon\}.\] (Note that all of the arguments for confining subsets under $\phi$ depended only on the fact that $\phi$ is an Anosov matrix; of course $\phi^{-1}$ is also an Anosov matrix).

The action $G\curvearrowright \Gamma(G,Q_\epsilon^- \cup \{t^{\pm 1}\})$ is equivalent to the following action of $G$ on $\hyp^2$: let $\Z^2$ act parabolically on $\hyp^2$ by $p\cdot z=z+\rho(p)$ and let $t$ act loxodromically on $\hyp^2$ by $t\cdot z=\lambda z$. By the argument in Lemma \ref{lem:hypplane}, this action of $G$ on $\hyp^2$ is equivalent to the action $G\curvearrowright \Gamma(G,Q_\epsilon^- \cup \{t^{\pm 1}\})$.

This can also be used to prove that $[Q_\epsilon \cup \{t^{\pm 1}\}]$ is not comparable to $[Q_\epsilon^- \cup \{t^{\pm 1}\}]$. Every conjugate of $t$ in the action of $G$ on $\Gamma(G,Q_\epsilon \cup \{t^{\pm 1}\})$ has a common \textit{repelling} fixed point, corresponding to $\infty$ in the upper half plane model of $\hyp^2$, but the conjugates of $t$ have many different attracting fixed points. On the other hand, every conjugate of $t$ in the action of $G$ on $\Gamma(G,Q_\epsilon^- \cup \{t^{\pm 1}\})$ has a common \textit{attracting} fixed point, but the conjugates have many different repelling fixed points. This suffices to show that the two actions are incomparable.

We have shown that $\mc H_{qp}(G)$ consists of two incomparable elements, $[Q_\epsilon \cup \{t^{\pm 1}\}]$ and $[Q_\epsilon^- \cup \{t^{\pm 1}\}]$, corresponding to two different actions of $G$ on $\hyp^2$.

Finally suppose that $[S]\in \H_\ell(G)$. We claim that $\Z^2$ acts elliptically on $\Gamma(G,S)$. Towards a contradiction, suppose otherwise.  Then the action of $\Z^2$ is cobounded, and \cite[Example~4.23]{abo} shows that the induced action of $\Z^2$ is lineal and fixes the two points of $\partial \Gamma(G,S)$.  Denote by $G_0$ the index $\leq 2$ subgroup of $G$ fixing the two points of $\partial \Gamma(G,S)$. Then $\Z^2\leq G_0$. If $t\in G_0$, then $G_0=G$. Otherwise, we have $t^2\in G_0$ and $G_0=\langle \Z^2, t^2\rangle$. Notice that, in this case, the group $G_0$ is isomorphic to $\Z^2\rtimes_{\phi^2} \Z$. In particular, in either case  Lemma \ref{abelianization} shows that the abelianization of $G_0$ is virtually cyclic, and therefore the commutator subgroup $[G_0,G_0]$ intersects $\Z^2$ in a finite-index subgroup. There is an associated \textit{Busemann homomorphism} $\beta\colon G_0\to \R$, and $\beta(g)\neq 0$ if and only if $g$ acts loxodromically on $\Gamma(G,S)$ (see \cite[Lemma~3.8]{ccmt}). This homomorphism factors through the abelianization of $G_0$, and in particular $\ker(\beta)\cap \Z^2$ has finite index in $\Z^2$ by the above discussion. But  since $\R$ is torsion free, it must be the case  that $\Z^2\leq \ker(\beta)$. Thus $\Z^2$ acts elliptically on $\Gamma(G,S)$. This is a contradiction to our assumption that the action of $\Z^2$ on $\Gamma(G,S)$ is \textit{not} elliptic. Thus, our claim that $\Z^2\curvearrowright \Gamma(G,S)$ is elliptic is proven.

We have shown that $[\Z^2 \cup \{t^{\pm 1}\}]\preccurlyeq [S]$. However, if two lineal structures are comparable then they must in fact be the same (\cite[Corollary~4.12]{abo}). Thus $[S]=[\Z^2\cup \{t^{\pm 1}\}]$ and $|\H_\ell(G)|=1$. Note that we have $[\Z^2\cup \{t^{\pm 1}\}]\preccurlyeq [Q_\epsilon \cup \{t^{\pm 1}\}]$ and $[\Z^2\cup \{t^{\pm 1}\}]\preccurlyeq [Q_\epsilon^-\cup \{t^{\pm 1}\}]$, which completes the proof.
\end{proof}


\bibliographystyle{plain}
\bibliography{inaccessible}

\noindent \textbf{Carolyn R. Abbott } Department of Mathematics, Brandeis University, Waltham, MA 02453. \\
E-mail: \emph{carolynabbott@brandeis.edu}

\bigskip

\noindent \textbf{Alexander J. Rasmussen } Department of Mathematics, University of Utah, Salt Lake City, UT 84112. \\
E-mail: \emph{rasmussen@math.utah.edu}

\end{document}